\theoremstyle{plain}
\newtheorem{thm}{Theorem}[section]
\newtheorem*{mainthm}{Main Theorem}
\newtheorem*{thm*}{Theorem}
\newtheorem{prop}[thm]{Proposition}
\newtheorem{lemma}[thm]{Lemma}
\newtheorem{cor}[thm]{Corollary}
\newtheorem{observation}[thm]{Observation}
\theoremstyle{definition}
\newtheorem{definition}[thm]{Definition}
\newtheorem{example}[thm]{Example}
\theoremstyle{remark}
\newtheorem{remark}[thm]{Remark}
\numberwithin{equation}{thm}
\def \L {\mathcal{L}}
\def \A {\mathcal{A}}
\def \C {\mathcal{C}}
\def \E {\mathcal{E}}
\def \B {\mathcal{B}}
\def \R {\mathbb{R}}
\def \F {\mathbb{F}}
\def \Z {\mathbb{Z}}
\def \N {\mathbb{N}}
\def \gl {\mathrm{GL}}
\def \ncp {\mathrm{NCP}}
\def \ncb {\mathrm{NCB}}
\def \nc {\mathrm{NC}}
\def \ncw {\mathrm{NC}(W)}
\def \ocncw {\vert\mathrm{NC}(W)\vert}
\def \red {\mathcal{R}_T}
\def \chains {\mathcal{C}} 
\def \chambers {\mathcal{C}h} 
\def \hw {H(W)} 
\def \dka {(\!(} 
\def \dkz {)\!)} 
\DeclareMathOperator \ecc {ecc} 
\DeclareMathOperator \rad {rad} 
\DeclareMathOperator \diam {diam} 
\DeclareMathOperator \sd {sd} 
\DeclareMathOperator \id {id} 
\DeclareMathOperator \im {im} 
\DeclareMathOperator \rk {rk} 
\DeclareMathOperator \fix {Fix} 
\DeclareMathOperator \mov {Mov} 
\title{Generalized non-crossing Partitions and Buildings}
\author{Julia Heller and Petra Schwer}
\address{Julia Heller and Petra Schwer, Department of Mathematics, Karlsruhe Institute of Technology, Englerstrasse 2, 76131 Karlsruhe, Germany}
\email{julia.heller@kit.edu, petra.schwer@kit.edu}
\date{\today}
\begin{document}
	
	\maketitle
	
	
	\begin{abstract}
		
		For any finite Coxeter group $W$ of rank $n$ we show that the order complex of the lattice of non-crossing partitions $\ncw$ embeds as a chamber subcomplex into a spherical  building  of type $A_{n-1}$. 
		We use this to give a new proof of the fact that the non-crossing partition lattice in type $A_n$ is supersolvable for all $n$. Moreover, we show that in case $B_n$, this is only the case if $n<4$.
		We also obtain a lower bound on the radius of the Hurwitz graph $H(W)$ in all types and re-prove that in type $A_n$ the radius is $\binom{n}{2}$.     
		
		\bigskip\noindent \textbf{Keywords:}  	generalized non-crossing partitions; buildings; Hurwitz graph; supersolvability
	\end{abstract}
	
	\section{Introduction}
	
	The lattice of \emph{non-crossing partitions} $\nc(W,c)$ of a finite Coxeter group $W$, defined with respect to some Coxeter element $c$, is the interval below $c$ in the absolute order, that is
	\[
	\nc(W,c)  = \{\pi \in W: \ \pi \leq c\}.
	\]
	It is easy to see that the isomorphism type of this lattice is independent of the choice of $c$. In type $A$, the case of the symmetric group $S_n$, this definition agrees with  the classical notion of non-crossing partitions $\ncp_n$ introduced by Kreweras \cite{kre}.
	Our main result is the following.
	
	\begin{mainthm}
		For every finite Coxeter group $W$ of rank $n$, the order complex of the non-crossing partitions $\ocncw$ is isomorphic to a chamber subcomplex of a spherical building $\Delta$ of type $A_{n-1}$. This subcomplex  is the union of a collection of apartments and has the homotopy type of a wedge of spheres. 
		Moreover, if $W$ is crystallographic then one can choose $\Delta$ to be finite.  
	\end{mainthm}
	
	The fact that the order complex of $\ncp_n$ embeds into a spherical building of type $A_{n-2}$ was shown by  Brady and McCammond \cite{bra-mcc} as well as Haettel, Kielak and the second author \cite{hks}. Our main result generalizes these results. 
	
	Throughout the rest of the paper we provide a couple of applications of our main result. Corollary~\ref{cor:Asupersolvable} contains a new proof for the supersolvability of the type $A$ non-crossing partitions. This fact was first shown by Hersh as  Theorem 4.3.2 of her Ph.D. Thesis \cite{her}. In contrast to that we show in Theorem~\ref{thm:ncbn_ss}  that $\nc(B_n)$ is only supersolvable if $n \leq 3$.

	In Theorem~\ref{thm:radius} we show that for all finite Coxeter groups $W$ of rank $n$ the radius of the Hurwitz graph, as defined in  Definition~\ref{def:hurwitz}, is bounded below by $\binom{n}{2}$. In addition we provide a simple new proof of the fact that the radius of the Hurwitz graph in type $A_n$ equals $\binom{n}{2}$ in the same theorem. 
	Moreover, we provide examples showing that sometimes $\diam(H(W))>\rad(H(W))$. 
	This partially answers a question formulated by Adin and Roichman in \cite{ar} and disproves their conjecture on the radius of the Hurwitz graph in type $B_3$. 
	
	We would like to illustrate our main result with a concrete example. Figure~\ref{fig:labeling} below shows the order complex of non-crossing partitions $\ncp_4$ inside the spherical building of rank $2$ over $\F_2$. The light gray edges are the chambers of the building that are not contained in the image of $\vert \ncp_4 \vert $. See also Example~\ref{ex:first}. 
	A type $B$ example is given in Figure~\ref{fig:b3} on page \pageref{fig:b3}.

	\begin{floatingfigure}[hp]{0.55\textwidth}
		\centering
		\includegraphics[width=6.5cm]{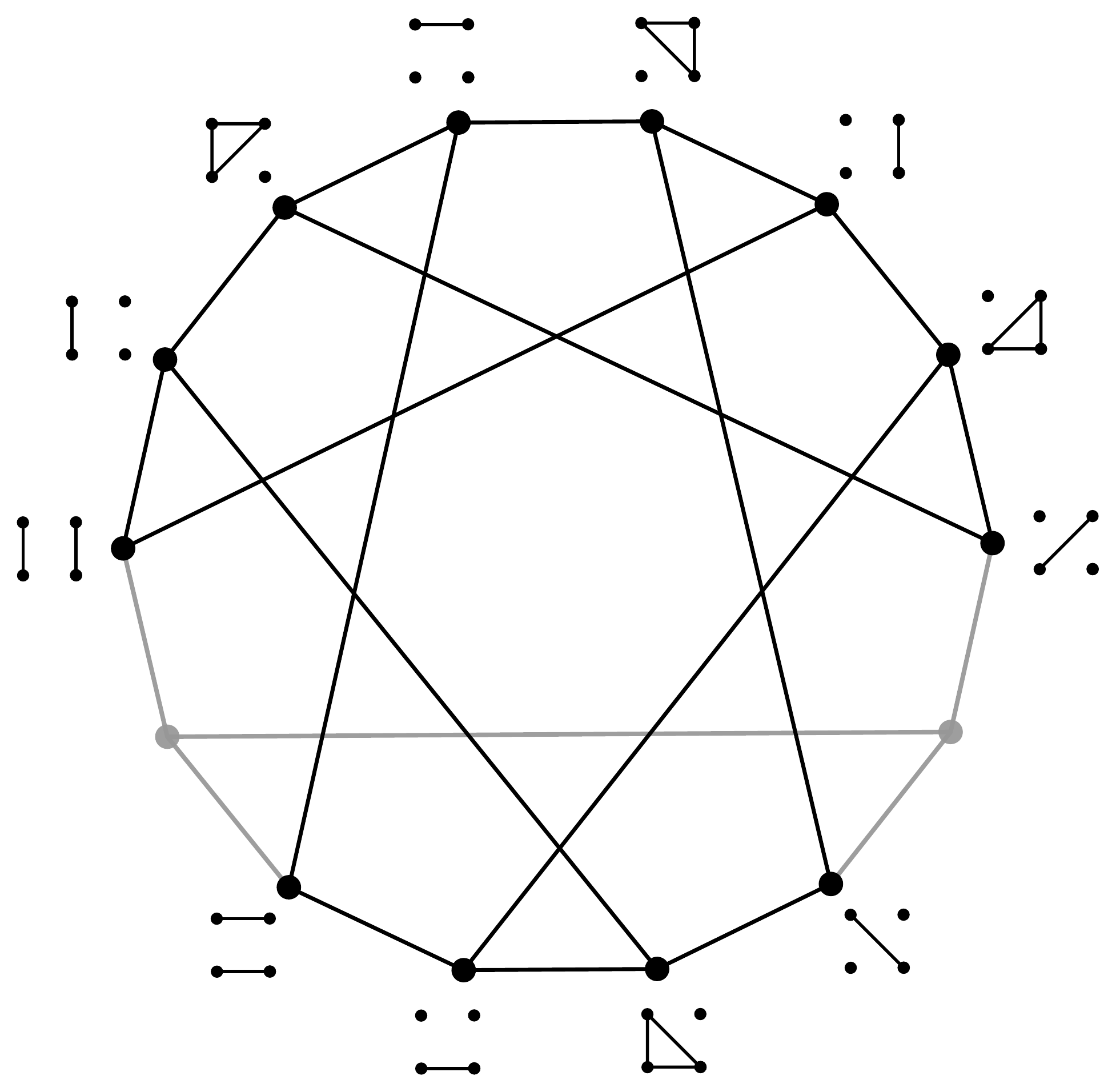}%
		\caption{$\vert\ncp_4\vert$ inside a spherical building.} 
		\label{fig:labeling}
		\label{fig:ncp4}
	\end{floatingfigure}
	
	In Section \ref{sec:ncp} we collect some basic concepts and results concerning absolute order and non-crossing partitions. 
	We also characterize maximal Boolean lattices in the non-crossing partition lattice. 
	
	Structural results of the complex $|\ncw|$ are proven in Section~\ref{sec:embedding}. For instance, we show that the chambers and a subset of the apartments in $|\ncw|$ can be described by reduced factorizations of the Coxeter element. This enables us to show that $|\ncw|$ is a union of apartments. In this section we also prove the main result.
	
	Section \ref{sec:AB} is devoted to explicitly constructing embeddings of the order complex of non-crossing partitions for type $A$ and $B$ into finite buildings. We also provide pictorial interpretations of these embeddings, which we use to give a pictorial description of apartments in type $B$. The analog for type $A$ is in \cite{hks}. This section is also where we discuss supersolvability. 
	
	In Section~\ref{sec:Hurwitz} we will prove the mentioned estimates and equalities on the diameter and radius of the Hurwitz graph $\hw$. This result relies on the embedding of the Hurwitz graph into the chamber graph $\Gamma_\Delta$ of the building $\Delta$ which one obtains from the Main Theorem.  Finally, we discuss a pictorial interpretation of the Hurwitz action in type $A$ in terms of labeled trees. Using this we sketch how to re-prove the transitivity of the Hurwitz action in type $A$. 
	
	In  Appendix \ref{cha:buildings} some very basic facts about spherical buildings of type $A$ are given. The reader not familiar with buildings may want to read that section first.

	\section{Finite Coxeter Groups and non-crossing Partitions}\label{sec:ncp}
	
	Throughout, we will assume that $(W, S)$ is a finite Coxeter system, i.e. $W$ is a finite Coxeter group and $S=\{s_1, \ldots, s_n\}$ the standard generating set for $W$, consisting of elements of order $2$. The \emph{rank} of $W$ is $n$. 
	
	A \emph{Coxeter element} in $W$ is any element conjugate to $s_1\ldots s_n$. Note that in particular every element of the form $s_{\sigma(1)}s_{\sigma(2)}\ldots s_{\sigma(n)}$ for a permutation $\sigma$ in the symmetric group $S_n$ is a Coxeter element. There are slightly different notions of Coxeter elements in the literature. We follow \cite{armstr}, but Humphreys \cite{hum} only considers elements of the form $s_{\sigma(1)}s_{\sigma(2)}\ldots s_{\sigma(n)}$ as Coxeter elements.
	
	In \cite{bes}, Bessis introduced the notion of a \emph{dual Coxeter system} $(W,T)$. It arises from the classical Coxeter system by replacing the generating set $S$ by the larger generating set
	\[
	T = \{wsw^{-1}:\ s \in S, w \in W\}.
	\]
	Since $T$ is the conjugacy closure of $S$ in $W$, it obviously generates $W$ and consists of elements of order $2$. The elements of $T$ are called \emph{reflections} and elements of $S$ are called \emph{simple reflections}. For a generating set $X$ of $W$, the set of reduced expressions with respect to $X$ of $w \in W$ is denoted by $\mathcal{R}_X(w)$. The elements of $\mathcal{R}_X(w)$ are called \emph{$X$-reduced expressions} of $w$.

	\subsection{Absolute order and non-crossing partitions}
	
	We will always assume our generating set of $W$ to be $T$. For example, we will call a word in $W$ \emph{reduced}, if it is reduced with respect to $T$ and we denote by $\ell$ the word length on $W$ with respect to $T$. Define the \emph{absolute order} $\leq$ on $W$ by setting
	\[
	v \leq w \ \iff \ \ell(w) = \ell(v) + \ell(v^{-1}w)
	\]
	for all $v,w \in W$.  The poset $(W, \leq)$ is a finite poset graded by $\ell$ with the identity as unique minimal element and the Coxeter elements among the maximal elements. Note that $v \leq w$ holds if and only if there is a reduced expression of $w$ such that a prefix of it is a reduced expression for $v$.
	The absolute order is the natural analog of the weak order in the classical approach. 
	
	Let us now review two properties of reduced words and the absolute order.
	
	\begin{lemma}[Shifting property]\label{lem:shift}\cite[Lem. 2.5.1]{armstr}.
		For $w \in W$ and $t_1\ldots t_k \in \red(w)$, the maps $\sigma_i, \sigma_i'\colon \red(w) \to \red(w)$ defined by
		\begin{align*}
		\sigma_i(t_1\ldots t_k) &= t_1\cdot t_2\cdot \ldots \cdot t_{i-1}\cdot (t_i t_{i+1} t_i)\cdot t_i \cdot t_{i+2} \cdot\ldots\cdot t_k,\\
		\sigma'_i(t_1\ldots t_k) &=t_1 \cdot t_2 \cdot \ldots \cdot t_{i-1} \cdot t_{i+1} \cdot (t_{i+1} t_i t_{i+1}) \cdot t_{i+2} \cdot\ldots\cdot t_k
		\end{align*}
		are inverse self-maps of $\red(w)$ for all $1\leq i <k$. In particular, $T$-reduced expressions do not contain repetitions of letters of $T$. 
	\end{lemma}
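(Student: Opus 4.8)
The plan is to establish the three assertions in order: that $\sigma_i$ and $\sigma_i'$ genuinely map $\red(w)$ into itself, that they are mutually inverse, and then to deduce the no-repetition statement as a corollary.

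For well-definedness, fix $t_1\ldots t_k\in\red(w)$, so $k=\ell(w)$. In the word $\sigma_i(t_1\ldots t_k)$ the only letter that changes is the conjugate $t_it_{i+1}t_i=t_it_{i+1}t_i^{-1}$, which again lies in $T$ because $T$ is the full conjugacy closure of $S$ and hence closed under $W$-conjugation. The word still has length $k$, and its product is unchanged since $(t_it_{i+1}t_i)\cdot t_i=t_it_{i+1}$ using $t_i^2=1$. As $k=\ell(w)$, any length-$k$ word over $T$ with product $w$ is $T$-reduced by definition, so $\sigma_i(t_1\ldots t_k)\in\red(w)$; the identity $t_{i+1}\cdot(t_{i+1}t_it_{i+1})=t_it_{i+1}$ handles $\sigma_i'$ in the same way.

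Next I would evaluate the two composites on a word $u=t_1\ldots t_k$. Applying $\sigma_i$ places $t_it_{i+1}t_i$ in position $i$ and $t_i$ in position $i+1$; feeding this into $\sigma_i'$ returns $t_i$ to position $i$ and $t_i(t_it_{i+1}t_i)t_i=t_{i+1}$ to position $i+1$, recovering $u$. The computation for $\sigma_i\circ\sigma_i'$ is entirely symmetric. Hence $\sigma_i$ and $\sigma_i'$ are inverse to one another and in particular are bijections of $\red(w)$.

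Finally, for the ``in particular'' I would argue by minimal counterexample. Suppose some $T$-reduced word (for some $w\in W$) contains a repeated letter; among all such words and all pairs of positions $i<j$ carrying equal letters, choose one minimizing the gap $m:=j-i$. If $m=1$ then $t_it_{i+1}=1$, so deleting these two letters yields a word of length $k-2$ with product $w$, contradicting $k=\ell(w)$. If $m\ge 2$, then $\sigma_i$ is defined (note $1\le i<j\le k$), and $\sigma_i(t_1\ldots t_k)\in\red(w)$ agrees with $t_1\ldots t_k$ outside positions $i$ and $i+1$ while carrying $t_i$ in position $i+1$; since $j\ne i+1$, positions $i+1$ and $j$ now hold the equal letters $t_i=t_j$, a repetition with gap $m-1<m$, contradicting minimality. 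The one place that calls for care is this last step --- confirming that the shift is applicable and that the chosen quantity strictly decreases --- but there is no real obstacle here; everything else reduces to the relation $t^2=1$.
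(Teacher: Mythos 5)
Your proof is correct. The paper itself gives no argument here --- the lemma is quoted verbatim from Armstrong (Lem.~2.5.1) --- and your write-up is essentially the standard proof of that result: conjugation-closedness of $T$ plus $t^2=1$ gives well-definedness and the mutual-inverse computation, and the no-repetition clause follows by shifting a repeated letter until the two occurrences become adjacent and cancel, contradicting $k=\ell(w)$; your minimal-gap induction is a clean way to organize that last step.
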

	
	Note that these shifts only alter the $i$th and $i+1$st letter of a reduced decomposition. 
	It is a straightforward computation that $\sigma_i'=\sigma_i^{-1}$ and that the $\sigma_i$ satisfy the braid relations. They hence induce an action of the braid group on $\red(c)$, which is called the \emph{Hurwitz action}. This action is known to be transitive \cite[Prop 1.6.1]{bes}. 
	
	These shifts are used to prove the following characterization of the absolute order, which is an analog of the subword property of the Bruhat order on $W$. If $t_1t_2\ldots t_k$ is a word, we call an expression $t_{i_1}t_{i_2}\ldots t_{i_l}$ a \emph{subword} if $1 \leq i_1 < i_2 < \ldots < i_l \leq k$.
	
	\begin{prop}[Subword property]\label{prop:subword}\cite[Prop. 2.5.2]{armstr}.
		For $v,w \in W$ we have $v \leq w$ if and only if there is a $T$-reduced expression of $w$ that contains $v$ as a subword. 
	\end{prop}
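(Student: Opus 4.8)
The plan is to prove the two implications separately. The forward direction is essentially a restatement of the prefix description of the absolute order recorded just before the statement, while the backward direction is where the shifting property (Lemma~\ref{lem:shift}) does the work.

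For the implication ``$v\le w$ $\Rightarrow$ $v$ is a subword'', assume $\ell(w)=\ell(v)+\ell(v^{-1}w)$. Choose any $T$-reduced expression $t_1\cdots t_p\in\red(v)$ with $p=\ell(v)$ and any $t_{p+1}\cdots t_{p+q}\in\red(v^{-1}w)$ with $q=\ell(v^{-1}w)$. Then $t_1\cdots t_{p+q}$ represents $v\cdot(v^{-1}w)=w$ and has length $p+q=\ell(w)$, so it lies in $\red(w)$; and $v=t_1\cdots t_p$ occurs in it as a prefix, in particular as a subword. This is nothing more than the remark before the statement that $v\le w$ iff $v$ is the value of some prefix of a $T$-reduced expression of $w$, together with the triviality that a prefix is a subword.

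For the converse, suppose $t_1\cdots t_k\in\red(w)$ and $v=t_{i_1}t_{i_2}\cdots t_{i_l}$ with $1\le i_1<i_2<\cdots<i_l\le k$. I would use the maps $\sigma'_j$ of Lemma~\ref{lem:shift} to slide the selected letters to the front, one at a time, from left to right. The mechanism is that $\sigma'_j$ moves the letter in position $j+1$ into position $j$ \emph{unchanged} (pushing a conjugate of the old $j$-th letter into position $j+1$) and alters no position other than $j$ and $j+1$. Hence the composite $\sigma'_1\circ\sigma'_2\circ\cdots\circ\sigma'_{i_1-1}$ carries $t_{i_1}$ into position $1$ while touching only positions $1,\dots,i_1$, so $t_{i_2},\dots,t_{i_l}$ stay put in positions $i_2,\dots,i_l$ and unchanged. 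Next $\sigma'_2\circ\cdots\circ\sigma'_{i_2-1}$ carries $t_{i_2}$ into position $2$ while fixing position $1$ and all positions $>i_2$; and so on. After $l$ rounds we reach a word in $\red(w)$ whose first $l$ letters are $t_{i_1},\dots,t_{i_l}$. Since a prefix of a $T$-reduced word is again $T$-reduced, this word exhibits a $T$-reduced expression of $w$ one of whose prefixes is a $T$-reduced expression of $v$, whence $v\le w$ by the prefix characterization.

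The point I would write out with care — and the only real, though minor, obstacle — is the bookkeeping in the last paragraph: one must verify that round $a$ leaves the still-unmoved selected letters fixed, both in position and in value. This holds because round $a$ consists exactly of the shifts $\sigma'_{i_a-1},\sigma'_{i_a-2},\dots,\sigma'_a$, whose indices lie strictly below the current position $i_a$ of the letter being moved and at least the target slot $a$; such shifts disturb only positions $a,\dots,i_a$, so they fix positions $1,\dots,a-1$ (already occupied by $t_{i_1},\dots,t_{i_{a-1}}$) and all positions $>i_a$, the latter still carrying $t_{i_{a+1}},\dots,t_{i_l}$ since $i_b<i_{a+1}$ for every $b\le a$. (Rounds with $i_a=a$ are empty, and all index ranges are valid.) Everything else is immediate from Lemma~\ref{lem:shift}, in particular that each $\sigma'_j$ maps $\red(w)$ into itself, and from the definition of the absolute order.
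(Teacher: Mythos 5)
Your argument is correct. The paper itself gives no proof of this proposition (it is quoted from \cite[Prop.~2.5.2]{armstr}), but the sentence preceding it — ``These shifts are used to prove the following characterization'' — points exactly at the route you take: the forward direction is the prefix characterization of $\leq$, and the converse is obtained by using the maps $\sigma'_j$ of Lemma~\ref{lem:shift} to slide the selected letters to the front, with your bookkeeping of which positions each round disturbs being the only point requiring care, and you handle it correctly.
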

	
	\begin{definition}\label{def:ncw}
		Let $c$ be a Coxeter element in $W$. The \emph{non-crossing partitions} $\nc(W,c)$  of $W$ with respect to $c$ is the interval between $\id$ and $c$ in absolute order, i.e.
		\[
		\nc(W,c) \coloneqq [\id, c] = \{\pi \in W: \ \pi \leq c\}.
		\]
	\end{definition}
	
	The fact that any two Coxeter elements $c$ and $c'$ are conjugate implies that $\nc(W,c)$ and $\nc(W,c')$ are isomorphic posets. We therefore write $\ncw$ for $\nc(W,c)$.  We also denote by $\nc(X_n)$ the non-crossing partitions of the Coxeter group of type $X_n$. 
	
	The grading of $W$ by $\ell$ induces a grading of $\ncw$ by $\ell$. Moreover, $\ncw$ carries the structure of a  finite, graded lattice of rank $n-1$, where $n$ is the rank of $W$. 
	See \cite[Thm. 7.8]{bra_watt_lattice} for a uniform proof of this fact. 
	
	The next lemma shows that taking joins is compatible with the group structure.  
	
	\begin{lemma}\label{lem:product_join}\cite[Le. 2.6.13]{armstr}.  
		Choose $w$ in $\ncw$. For any $T$-reduced expression $t_1 \ldots t_k$ of $w$ we have $w = t_1 \vee \ldots \vee t_k$.  
	\end{lemma}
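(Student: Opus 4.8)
The plan is to argue by induction on $k=\ell(w)$. For $k\le 1$ there is nothing to do: the empty join is the least element $\id$ of $\ncw$, and a single-letter join is that letter. So I would fix $k\ge 2$, assume the statement for all elements of $\ncw$ of smaller length, and set $w'=t_1\cdots t_{k-1}$. Since $t_1\cdots t_k$ is $T$-reduced, so is its prefix $t_1\cdots t_{k-1}$; hence $\ell(w')=k-1$ and $w'\le w\le c$, so $w'\in\ncw$ and the inductive hypothesis gives $w'=t_1\join\cdots\join t_{k-1}$. It therefore suffices to prove $w=w'\join t_k$.

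For the inequality $w'\join t_k\le w$ I would just observe that, by the Subword Property (Proposition~\ref{prop:subword}), every $t_i$ satisfies $t_i\le w\le c$, so in particular $t_k\in\ncw$ and $w$ is an upper bound of $\{w',t_k\}$ inside $\ncw$. For the reverse inequality I would exploit that $\ncw$ is graded by $\ell$: from $w'\le w'\join t_k\le w$ together with $\ell(w')=k-1$ and $\ell(w)=k$ it is enough to check that the first inequality is strict, i.e.\ that $t_k\not\le w'$, since then $\ell(w'\join t_k)\ge k$ forces $\ell(w'\join t_k)=k=\ell(w)$, whence $w'\join t_k=w$.

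The heart of the argument — the step I expect to carry all the content — is thus the claim $t_k\not\le w'$, which I would prove by contradiction. If $t_k\le w'$, then by the Subword Property $w'$ has a $T$-reduced expression containing $t_k$ as one of its letters, say $w'=p\,t_k\,q$ with $p,q$ $T$-reduced words and $\ell(p)+\ell(q)=k-2$. Then $w=w't_k=p\,t_k\,q\,t_k=p\,(t_kqt_k)$, and since conjugation by $t_k$ maps $T$ to $T$ we have $\ell(t_kqt_k)\le\ell(q)$, so $\ell(w)\le\ell(p)+\ell(q)=k-2<k$, contradicting $\ell(w)=k$. (Alternatively one can push $t_k$ to the end of a reduced expression of $w'$ using the shifts of Lemma~\ref{lem:shift} and then cancel, but the subword version is shorter.) The one point to be careful about is the logical order: the inductive hypothesis is used only to identify $w'$ with $t_1\join\cdots\join t_{k-1}$, after which everything is a statement about $w'$, $t_k$, the absolute order, and the grading of $\ncw$, so there is no circularity.
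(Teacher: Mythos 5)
Your proof is correct. Note, however, that the paper does not prove this lemma at all --- it is quoted verbatim from Armstrong \cite[Lem.~2.6.13]{armstr} --- so there is no in-paper argument to compare against; what you have written is a self-contained substitute for that citation. Your argument is purely combinatorial: the inequality $t_1\vee\dots\vee t_k\leq w$ comes from the subword property, and the reverse inequality reduces, via the grading of $\ncw$ by $\ell$ and induction on $k$, to the claim $t_k\not\leq t_1\cdots t_{k-1}$, which you verify by the conjugation/cancellation count $\ell(p\,t_k\,q\,t_k)=\ell(p\cdot(t_kqt_k))\leq \ell(p)+\ell(q)=k-2$. This last step is essentially a re-derivation of the ``no repeated letters in a $T$-reduced word'' consequence of Lemma~\ref{lem:shift}, and you could shorten the argument by invoking that directly: if $t_k\leq w'$ then appending $t_k$ to a suitable reduced expression of $w'$ yields a length-$k$ (hence reduced) expression of $w$ with a repeated letter. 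The standard reference proof instead goes through the moved-space machinery (Carter's Lemma~\ref{lem:carter} and the injectivity of $\pi\mapsto\mov(\pi)$ from Proposition~\ref{prop:emb}), identifying both $w$ and the join with the span of the roots $\alpha_{t_1},\dots,\alpha_{t_k}$; your route avoids that geometry entirely, at the cost of a slightly longer elementary computation. All the individual steps (reducedness of prefixes, $u\leq v$ and $\ell(u)=\ell(v)$ forcing $u=v$, the join being taken inside the lattice $\ncw$) check out.
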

	
	Maximal chains in $\ncw$ are all of the same length and have an interpretation in terms of reduced decompositions. Compare also Corollary 4 of \cite{bra_watt_par_ord}. Let $\C(W)$ denote the set of maximal chains in $\nc(W,c)$.
	
	\begin{lemma}\label{lem:max_chains}
		The map $g\colon \red(c) \to \C(W)$ defined by
		\[
		g(t_1\ldots t_n) = (\id \leq t_1 \leq t_1t_2 \leq \ldots \leq t_1\ldots t_n)
		\]
		is a bijection of reduced decompositions of the Coxeter element $c$ and maximal chains in $\nc(W,c)$.
	\end{lemma}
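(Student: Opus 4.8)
The plan is to check directly that $g$ is well-defined and bijective, using the results already assembled. First I would verify that $g(t_1\ldots t_n)$ really is a maximal chain in $\nc(W,c)$: by the definition of absolute order, each prefix $t_1\ldots t_i$ satisfies $\ell(t_1\ldots t_i)=i$, so $t_1\ldots t_i \leq t_1\ldots t_{i+1}$ and moreover $t_1\ldots t_i \leq c$ by the Subword Property (Proposition~\ref{prop:subword}), hence lies in $\nc(W,c)$. Since $\nc(W,c)$ is graded of rank $n-1$ by $\ell$ and the chain $\id < t_1 < t_1t_2 < \cdots < t_1\ldots t_n = c$ increases rank by exactly one at each step, it is a maximal chain; thus $g$ indeed lands in $\C(W)$.

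Injectivity is immediate: the reduced word $t_1\ldots t_n$ can be recovered from the chain by $t_i = (t_1\ldots t_{i-1})^{-1}(t_1\ldots t_i)$, so two distinct reduced decompositions give distinct chains. For surjectivity, take any maximal chain $\id = w_0 < w_1 < \cdots < w_n = c$ in $\nc(W,c)$. Because the chain is maximal and $\nc(W,c)$ is graded by $\ell$, we have $\ell(w_i) = i$ for all $i$. Set $t_i \coloneqq w_{i-1}^{-1}w_i$. From $w_{i-1} \leq w_i$ and $\ell(w_i) = \ell(w_{i-1}) + 1$ we get $\ell(t_i) = \ell(w_{i-1}^{-1}w_i) = 1$, so $t_i \in T$. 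Telescoping gives $t_1 t_2 \cdots t_n = w_n = c$, and since $\ell(c) = n = \sum_i \ell(t_i)$, this expression is $T$-reduced, i.e. $t_1\ldots t_n \in \red(c)$, and by construction $g(t_1\ldots t_n)$ is the chain we started with.

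The only genuine input is that every maximal chain in $\nc(W,c)$ has $\ell(w_i) = i$, which follows from the stated fact that $\nc(W,c)$ is a graded lattice of rank $n-1$ with grading induced by $\ell$; everything else is bookkeeping with the definition of absolute order. I do not expect a real obstacle here — the main point to be careful about is not to conflate "maximal chain" with "saturated chain of maximal possible length" without invoking gradedness, which is exactly what licenses the step $\ell(w_i)=i$.
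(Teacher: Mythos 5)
Your proof is correct and follows essentially the same route as the paper: both construct the inverse map by taking successive quotients $t_i = w_{i-1}^{-1}w_i$ and observing that maximality (via gradedness) forces each $t_i$ to be a reflection. You simply spell out the well-definedness and injectivity checks that the paper leaves implicit.
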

	
	\begin{proof}
		For a given maximal chain $\id \leq w_1 \leq w_2 \leq \ldots \leq w_{n-1}=c$ a reduced decomposition of $c$ is given by $w_1 \cdot w_1^{-1}w_2 \cdot w_2^{-1}w_3 \cdot \ldots \cdot w_{n-2}^{-1}w_{n-1}$. The fact that $w_{i}^{-1}w_{i+1}$ is a reflection in $T$ follows from the maximality of the chain. This obviously provides an inverse map.
	\end{proof}
	
	A simple computation implies the next corollary. 
	
	\begin{cor}\label{cor:adj_chains}
		Choose $t_1\ldots t_n \in \red(c)$. For all $1\leq i<n$ and all integers $k \geq 1$, the chains $g(\sigma_i^k(t_1\ldots t_n))$ and $g(t_1\ldots t_n)$ differ by exactly one element. The analog holds for $\sigma'$.
	\end{cor}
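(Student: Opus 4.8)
The plan is to read off what $g$ does and to check that the Hurwitz move $\sigma_i$ changes only a single interior element of the associated maximal chain. Write the chain $g(t_1\ldots t_n)$ as $(p_0\le p_1\le\cdots\le p_n)$, where $p_j:=t_1\cdots t_j$, so that $p_0=\id$ and $p_n=c$. In these terms the corollary is the assertion that the sequence of partial products of $\sigma_i^k(t_1\ldots t_n)$ agrees with $(p_j)_{j}$ in every position except (at most) the $i$-th, the endpoints $p_0=\id$ and $p_n=c$ in particular being untouched because $1\le i<n$.

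First I would treat a single application. By Lemma~\ref{lem:shift} the map $\sigma_i$ is a self-map of $\red(c)$ and it alters only the $i$-th and $(i+1)$-st letters, replacing the pair $(t_i,t_{i+1})$ by $(t_it_{i+1}t_i,\,t_i)$. The product of this new pair is $(t_it_{i+1}t_i)\cdot t_i=t_it_{i+1}$, hence it is unchanged; therefore for every $j<i$ (nothing before position $i$ has moved) and for every $j\ge i+1$ (the product of the first $i+1$ letters, and hence of the first $j$ letters, is unchanged) the partial product $p_j$ is the same for $t_1\ldots t_n$ and for $\sigma_i(t_1\ldots t_n)$. Only the $i$-th partial product can change, namely from $p_i=p_{i-1}t_i$ to $p_{i-1}t_it_{i+1}t_i$, and these two elements agree precisely when $t_{i+1}t_i=\id$, i.e.\ when $t_i=t_{i+1}$, which is impossible in a $T$-reduced word again by Lemma~\ref{lem:shift}. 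Thus $g(\sigma_i(t_1\ldots t_n))$ and $g(t_1\ldots t_n)$ differ in exactly the $i$-th element. The general case follows by iterating this observation: each further application of $\sigma_i$ is again a self-map of $\red(c)$, again touches only positions $i$ and $i+1$, and again preserves the product of those two letters (which stays equal to $t_it_{i+1}$); so by induction the partial products $p_j$ with $j\ne i$ are unchanged after any number $k\ge 1$ of applications. The statement for $\sigma'$ is identical word-for-word, since $\sigma'_i$ replaces $(t_i,t_{i+1})$ by $(t_{i+1},\,t_{i+1}t_it_{i+1})$, whose product is again $t_it_{i+1}$, and whose first entry $t_{i+1}\neq t_i$.

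This is the routine bookkeeping the paper advertises, so there is no real obstacle; the only point worth flagging is the force of the word ``exactly''. Since $\sigma_i$ permutes the (finitely many) factorizations of $t_it_{i+1}$ into two reflections --- concretely, it acts on the $i$-th letter by left multiplication by $t_it_{i+1}$, an element of finite order in the finite group $W$ --- some power $\sigma_i^k$ fixes the word $t_1\ldots t_n$ outright, in which case the two chains coincide. Hence ``differ by exactly one element'' is to be read as ``agree except possibly in the $i$-th element'' (equivalently: the two chains are $i$-adjacent when viewed as flags), and it is this ``at most one'' form, established above, that the building-theoretic applications in Section~\ref{sec:embedding} actually use.
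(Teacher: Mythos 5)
Your computation is exactly the ``simple computation'' the paper invokes (it offers no written proof), and it is correct: only the $i$-th partial product can move, and it moves by left multiplication of the $i$-th letter by $t_it_{i+1}$, which is nontrivial because $T$-reduced words have no repeated letters. Your closing caveat is also well taken and worth recording: since $t_it_{i+1}$ has finite order $m\geq 2$, the word returns to itself when $m\mid k$, so the corollary's ``differ by exactly one element'' must indeed be read as ``differ in at most one element'' (i.e.\ the chains are $i$-adjacent as flags), which is all that Corollary~\ref{cor:adj_chambers} and the subsequent building-theoretic arguments require.
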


	\subsection{A description of $\ncw$ in terms of moved spaces}\label{subsec:movedspaces}
	
	Let $W$ be a finite Coxeter group and $c$ a fixed Coxeter element of $W$. 
	We denote the standard geometric representation of $W$ on $V=\R^S$ by $\rho$ and the root corresponding to a reflection $t$ in $W$ by $\alpha_t$ and conversely, the reflection corresponding to a root $\alpha$ by $t_\alpha$. 
	
	For each $w \in W$, the \emph{fixed space}  $\fix(w)$ is the subspace of $V$ that is pointwise  fixed  by the linear transformation $\rho(w)$, i.e. $\fix(w)=\ker(\rho(w)-\id)$. Analogously, the \emph{moved space} $\mov(w)$ of $w$ is the orthogonal complement of $\fix(w)$ in $V$, i.e. $\mov(w)=\im(\rho(w) - \id)$. 
	
	We obtain the following description of the moved space in terms of reduced expressions and roots from the proof of Theorem~2.4.7 of \cite{armstr}.

	\begin{observation}\label{obs:basis}
		If $t_{\alpha_1}t_{\alpha_2}\ldots t_{\alpha_k}$ is a reduced expression for $w \in W$, then the set $\{\alpha_1, \alpha_2, \ldots, \alpha_k\}$ is a basis of $\mov(w)$. 
	\end{observation}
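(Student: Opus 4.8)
The plan is to isolate the one genuinely nontrivial input, namely Carter's identity $\ell(w)=\dim\mov(w)$, and then to read off the statement by a short dimension count; everything else is linear algebra.

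First I would recall that for every $w\in W$ one has $\dim\fix(w)=n-\ell(w)$, where $n=\dim V$; this is precisely what is established in the course of the proof of Theorem~2.4.7 of \cite{armstr}. Since $t_{\alpha_1}t_{\alpha_2}\ldots t_{\alpha_k}$ is a reduced expression for $w$, this gives $\dim\fix(w)=n-k$. Next I would record the only elementary fact needed. Writing $\alpha^{\perp}$ for the hyperplane orthogonal to a root $\alpha$, I claim
\[
\bigcap_{i=1}^{k}\alpha_i^{\perp}\subseteq\fix(w).
\]
Indeed, if $v\in V$ is orthogonal to every $\alpha_i$, then $\rho(t_{\alpha_i})v=v$ for all $i$, and therefore $\rho(w)v=\rho(t_{\alpha_1})\cdots\rho(t_{\alpha_k})v=v$.

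To finish, I would run the dimension count. The left-hand side of the displayed inclusion is the orthogonal complement of $\spa(\alpha_1,\ldots,\alpha_k)$, hence has dimension $n-\dim\spa(\alpha_1,\ldots,\alpha_k)\geq n-k$; but it is contained in $\fix(w)$, which has dimension $n-k$. So every inequality here is forced to be an equality: on the one hand $\dim\spa(\alpha_1,\ldots,\alpha_k)=k$, i.e.\ $\{\alpha_1,\ldots,\alpha_k\}$ is linearly independent, and on the other hand $\bigcap_{i=1}^{k}\alpha_i^{\perp}=\fix(w)$. Taking orthogonal complements in this last equality yields $\spa(\alpha_1,\ldots,\alpha_k)=\fix(w)^{\perp}=\mov(w)$, so the $\alpha_i$ form a basis of $\mov(w)$, as asserted.

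I do not expect any real obstacle here. The only non-routine ingredient is Carter's identity $\dim\fix(w)=n-\ell(w)$, supplied by the proof of Theorem~2.4.7 in \cite{armstr}; by itself the displayed inclusion only gives the one inequality $\dim\mov(w)\le k$, so the cited result is genuinely what pins down the dimension and hence forces both the linear independence of the $\alpha_i$ and the equality $\spa(\alpha_1,\ldots,\alpha_k)=\mov(w)$.
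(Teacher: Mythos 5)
Your argument is correct. Strictly speaking the paper offers no proof of this observation at all: it is recorded as something extracted from the proof of Theorem~2.4.7 of \cite{armstr}, and is then remarked to be a refinement of Carter's Lemma (Lemma~\ref{lem:carter}). What you do differently is derive the observation from the \emph{statement} of that theorem --- the identity $\ell(w)=\dim\mov(w)=n-\dim\fix(w)$ --- rather than from the internals of its proof, via the elementary inclusion $\spa(\alpha_1,\ldots,\alpha_k)^{\perp}=\bigcap_{i}\alpha_i^{\perp}\subseteq\fix(w)$ and a dimension count; since the bilinear form preserved by the geometric representation of a finite $W$ is positive definite, the orthogonal-complement bookkeeping ($\dim U+\dim U^{\perp}=n$ and $(U^{\perp})^{\perp}=U$) is valid, and the squeeze $n-k\leq\dim\spa(\alpha_1,\ldots,\alpha_k)^{\perp}\leq\dim\fix(w)=n-k$ does force both the linear independence and the equality $\spa(\alpha_1,\ldots,\alpha_k)=\mov(w)$. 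This buys a self-contained, checkable derivation with an explicitly isolated external input, and as a by-product it reproves the ``reduced implies independent'' half of Carter's Lemma; the route closest to the paper's phrasing would instead invoke Carter's Lemma for the independence and then use your inclusion together with $\dim\mov(w)=k$ for the spanning, which amounts to the same two ingredients arranged differently.
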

	
	This observation is a refinement of the following \cite[Lem. 3]{car}.
	
	\begin{lemma}[Carter's Lemma]\label{lem:carter}
		For $w \in W$, a word $w = t_{\alpha_1}t_{\alpha_2}\ldots t_{\alpha_k}$ is reduced if and only if the roots $\alpha_1, \alpha_2, \ldots, \alpha_k$ are linearly independent.
	\end{lemma}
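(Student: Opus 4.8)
The forward implication will follow at once from Observation~\ref{obs:basis}: if $w=t_{\alpha_1}\cdots t_{\alpha_k}$ is reduced then $\{\alpha_1,\dots,\alpha_k\}$ is a basis of $\mov(w)$, hence linearly independent. So the plan is to put all the work into the converse — that linear independence of the $\alpha_i$ forces the word $t_{\alpha_1}\cdots t_{\alpha_k}$ to be reduced — which I would prove by induction on $k$, deleting the last reflection $t_{\alpha_k}$ and using a parity argument to control the reflection length $\ell$. (Alternatively one may just cite \cite[Lem.~3]{car}; the argument below is self-contained given the tools already set up.)

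First I would isolate three ingredients. \textbf{(a)} For an \emph{arbitrary} expression $w=t_{\gamma_1}\cdots t_{\gamma_m}$ one has $\mov(w)\subseteq\spa\{\gamma_1,\dots,\gamma_m\}$; this is a quick induction on $m$, using that $\rho(t_\gamma)v-v\in\R\gamma$ for all $v\in V$. Combined with Observation~\ref{obs:basis} applied to a \emph{reduced} expression, (a) yields $\dim\mov(w)=\ell(w)$ for every $w\in W$. \textbf{(b)} Since $\det\rho(t)=-1$ for each reflection $t$, we have $\det\rho(w)=(-1)^{\ell(w)}$. \textbf{(c)} A small lemma: if $t_\alpha$ is a reflection and $\ell(ut_\alpha)=\ell(u)+1$, then $\alpha\in\mov(ut_\alpha)$. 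For this, (a) gives $\mov(ut_\alpha)\subseteq\mov(u)+\R\alpha$ and $\mov(u)\subseteq\mov(ut_\alpha)+\R\alpha$, so both sums equal one subspace $M$; then $\ell(u)+1=\ell(ut_\alpha)=\dim\mov(ut_\alpha)\le\dim M\le\dim\mov(u)+1=\ell(u)+1$, which forces $\mov(ut_\alpha)=M\ni\alpha$.

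The induction is then short. For $k\le 1$ the statement is clear. Let $k\ge 2$, let $\alpha_1,\dots,\alpha_k$ be linearly independent, and set $w=t_{\alpha_1}\cdots t_{\alpha_k}$ and $w'=t_{\alpha_1}\cdots t_{\alpha_{k-1}}$. By the inductive hypothesis $w'$ is reduced, so $\ell(w')=k-1$ and $\mov(w')=\spa\{\alpha_1,\dots,\alpha_{k-1}\}$ by Observation~\ref{obs:basis}. From $w=w't_{\alpha_k}$ and subadditivity of $\ell$ we get $\ell(w)\le k$, and $\ell(w')\le\ell(w)+1$ gives $\ell(w)\ge k-2$; by (b), $(-1)^{\ell(w)}=\det\rho(w)=(-1)^k$, so $\ell(w)\equiv k\pmod 2$ and hence $\ell(w)\in\{k-2,k\}$. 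If $\ell(w)=k$ we are done. If $\ell(w)=k-2$, then $\ell(wt_{\alpha_k})=\ell(w')=\ell(w)+1$, so (c) forces $\alpha_k\in\mov(wt_{\alpha_k})=\mov(w')=\spa\{\alpha_1,\dots,\alpha_{k-1}\}$, contradicting linear independence. Hence $\ell(w)=k$ and $t_{\alpha_1}\cdots t_{\alpha_k}$ is reduced.

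The point that needs care is ingredient (a) in its sharp form: what is really used is that the roots of a reduced expression \emph{span} $\mov(w)$, not merely lie inside it, and this — Observation~\ref{obs:basis} — is the only non-elementary input. Since some proofs of that statement themselves go through Carter's Lemma, I would either treat Observation~\ref{obs:basis} as the external result it is cited as, or, for a fully self-contained account, replace the parity step by a direct induction on $k$ showing $\mov(t_{\alpha_1}\cdots t_{\alpha_k})=\spa\{\alpha_1,\dots,\alpha_k\}$ whenever the $\alpha_i$ are independent; that version uses only (a) together with positive-definiteness of the $W$-invariant inner product on $V$ to exclude a nonzero common fixed vector of $\rho(t_{\alpha_1}),\dots,\rho(t_{\alpha_k})$ inside their span.
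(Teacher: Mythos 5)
Your argument is correct, but there is nothing in the paper to compare it against: the authors do not prove Carter's Lemma at all, they simply cite it as Lemma 3 of \cite{car} (and likewise state Observation~\ref{obs:basis} without proof, as extracted from Armstrong). What you have written is essentially a reconstruction of Carter's original argument — the determinant/parity trick $\det\rho(w)=(-1)^{\ell(w)}$ to pin $\ell(w)$ down to $\{k-2,k\}$, plus the moved-space dimension count to rule out $k-2$ — and each step checks out: ingredient (a) is a correct induction, the two inclusions in (c) do force $\mov(u)+\R\alpha=\mov(ut_\alpha)+\R\alpha$ and the dimension count then gives $\alpha\in\mov(ut_\alpha)$, and the final contradiction with linear independence is sound. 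The one genuine issue is the dependency you yourself flag: your proof leans on Observation~\ref{obs:basis} both for the forward implication and for the identity $\ell(w)=\dim\mov(w)$ used in (c), and in Armstrong's treatment that observation is itself derived alongside Carter's Lemma, so a fully self-contained account would need your suggested replacement (the direct induction showing $\mov(t_{\alpha_1}\cdots t_{\alpha_k})=\spa\{\alpha_1,\dots,\alpha_k\}$ for independent roots, using positive-definiteness of the invariant form). Within the logical structure of this paper, where Observation~\ref{obs:basis} is taken as an independent input, your proof stands as is; the trade-off versus the paper's bare citation is simply self-containedness at the cost of importing that observation.
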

	
	The following proposition is central to the construction of an embedding of the order complex of non-crossing partitions into a spherical building. This follows from results of Brady and Watt \cite{bra_watt_par_ord}, \cite{bra_watt_kpi} and is stated in a similar way in Theorem 2.4.9 in \cite{armstr}. For a vector space $V$, we denote by $\L(V)$ the lattice of linear subspaces of $V$.
	
	\begin{prop} \label{prop:emb}
		For every finite Coxeter group $W$ with Coxeter element $c$ and $V = \mov(c)$ we have  $\rk(\ncw) = \rk(\L(V))$ and the map
		\begin{align*}
		f \colon \ncw \to \L(V),\quad
		\pi \mapsto \mov(\pi)
		\end{align*}
		is a poset map that is injective, not surjective and rank preserving.  
	\end{prop}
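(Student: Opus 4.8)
The plan is to establish the four asserted properties one at a time; the first three follow quickly from the material already set up, and the real work is injectivity. For the rank equality, note first that a Coxeter element $c$ has no nonzero fixed vector in the geometric representation $\rho$ (equivalently, by Carter's Lemma~\ref{lem:carter} applied to a simple-reflection expression $s_1\cdots s_n$ of $c$, whose roots form a basis of $\R^S$), so $V=\mov(c)$ is all of $\R^S$, of dimension $n$, and $\ell(c)=n$. Since $\ncw$ is graded by $\ell$ with maximum $c$ and $\L(V)$ is graded by dimension with maximum $V$, the two graded posets have the same rank, giving $\rk(\ncw)=\rk(\L(V))$. That $f$ lands in $\L(V)$ is automatic, and $f$ is order preserving: if $\pi\leq\pi'$ then, by the definition of the absolute order, some $T$-reduced expression $t_{\alpha_1}\cdots t_{\alpha_m}$ of $\pi'$ has a $T$-reduced expression $t_{\alpha_1}\cdots t_{\alpha_k}$ of $\pi$ as a prefix, so Observation~\ref{obs:basis} gives $\mov(\pi)=\spa\{\alpha_1,\dots,\alpha_k\}\subseteq\spa\{\alpha_1,\dots,\alpha_m\}=\mov(\pi')$.

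Rank preservation is immediate from the same observation: if $t_{\alpha_1}\cdots t_{\alpha_k}$ is $T$-reduced for $\pi$ then $\ell(\pi)=k$ and $\{\alpha_1,\dots,\alpha_k\}$ is a basis of $\mov(\pi)$, whence $\dim\mov(\pi)=k=\ell(\pi)$. Non-surjectivity is clear once $n\geq 2$: the image $f(\ncw)$ is finite while $\L(\R^n)$ has infinitely many lines --- indeed $f$ omits $\R v$ for any $v$ not parallel to a root of a reflection below $c$, and there are only finitely many roots.

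The crux is injectivity, and here the difficulty is genuine: an orthogonal transformation is not determined by its moved space (e.g.\ two rotations by distinct angles in a common plane share a moved space), so the hypothesis $\pi\leq c$ must be used in an essential way. My approach would be to invoke the work of Brady and Watt \cite{bra_watt_par_ord,bra_watt_kpi} on the partial order on the orthogonal group $O(V)$ given by $u\leq v\iff\mov(v)=\mov(u)\oplus\mov(u^{-1}v)$: they show that the absolute order on $\ncw=[\id,c]$ is the restriction of this order and that $\pi\mapsto\mov(\pi)$ is injective on $[\id,c]$ (as also recorded in Theorem~2.4.9 of \cite{armstr}); geometrically, for $\pi\leq c$ the condition pins $\rho(\pi)$ down as the unique ``partial rotation'' of $\rho(c)$ supported on $\mov(\pi)$. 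A more self-contained route would be to try to induct on $\ell(\pi)$, using Lemma~\ref{lem:product_join} to write $\pi=t_{\alpha_1}\vee\cdots\vee t_{\alpha_k}$ for a basis $\{\alpha_1,\dots,\alpha_k\}$ of $\mov(\pi)$ and exploiting the subword property~\ref{prop:subword}; but recovering $\pi$ from the subspace alone still seems to require exactly the Brady--Watt recognition statement, so I would cite it directly. Assembling the four parts yields the proposition.
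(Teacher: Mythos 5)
Your proposal is correct and follows essentially the same route as the paper: the paper likewise delegates injectivity to Theorem~1 of \cite{bra_watt_par_ord} (and well-definedness and rank-preservation to \cite{bra_watt_kpi}), obtains the rank equality from $\ell(c)=\dim(V)$, and gets non-surjectivity from the finiteness of $W$ against the infinitude of $\L(\R^n)$. The only differences are cosmetic: you derive order- and rank-preservation directly from Observation~\ref{obs:basis} instead of citing Brady--Watt, and you correctly flag that non-surjectivity requires $n\geq 2$.
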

	
	\begin{proof}
		The map $f$ is well-defined by Section 2 of \cite{bra_watt_kpi} and the injectivity of $f$ is Theorem 1 of \cite{bra_watt_par_ord}. The fact that $f$ is rank-preserving is shown in Proposition 2.2 of \cite{bra_watt_kpi}. In particular, we get that $\rk(\ncw) = \rk(\L(V))$, since $\ell(c) = \dim(V)$ by construction. Since $W$ is finite and $V \cong \R^n$, the image of $f$ is always a proper subset of $\L(V)$. This completes the proof.
	\end{proof}
	
	We will identify $\ncw$ with its image under $f$ and will sometimes abuse notation and write  $\ncw\subseteq \L(V)$. 
	
	The following lemma is needed in Section~\ref{sec:AB} to construct explicit embeddings for type $A$ and $B$.
	
	\begin{lemma}\label{lem:join_assoc}
		Let $w \in \ncw$ and $t \in T$ such that $\rk(w \vee t)=\rk(w)+1$. Then $f(w \vee t) = f(w) \vee f(t)$. 
		In particular, for a $T$-reduced decomposition $t_1\ldots t_k$ of $w$ we get $f(w)=f(t_1 \vee \ldots \vee t_k)=f(t_1) \vee \ldots \vee f(t_k)$.
	\end{lemma}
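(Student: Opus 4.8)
The plan is to reduce the general statement to the controlled situation already packaged in Proposition~\ref{prop:emb}, namely that $f$ is an injective, rank-preserving poset map. First I would record the easy inclusion. Since $w \leq w\vee t$ and $t\leq w\vee t$ in $\ncw$, applying the poset map $f$ gives $f(w)\subseteq f(w\vee t)$ and $f(t)\subseteq f(w\vee t)$ inside $\L(V)$, hence $f(w)\vee f(t) = f(w)+f(t)\subseteq f(w\vee t)$. For the reverse inclusion I would use a dimension count: by hypothesis $\rk(w\vee t)=\rk(w)+1$, and since $f$ is rank-preserving and $\rk$ on $\L(V)$ is vector-space dimension, $\dim f(w\vee t) = \dim f(w)+1$. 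On the other hand $t\not\leq w$ (otherwise $w\vee t = w$, contradicting the rank assumption), so by injectivity of $f$ the line $f(t)=\mov(t)$ is not contained in $f(w)=\mov(w)$; therefore $\dim\bigl(f(w)+f(t)\bigr) = \dim f(w)+1 = \dim f(w\vee t)$. A subspace of the correct dimension sitting inside another forces equality, giving $f(w\vee t)=f(w)\vee f(t)$.

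For the "in particular" clause I would argue by induction on $k$. The case $k=1$ is trivial. Given a $T$-reduced decomposition $t_1\ldots t_k$ of $w$, write $w' = t_1\cdots t_{k-1}$; by Lemma~\ref{lem:product_join} we have $w' = t_1\vee\cdots\vee t_{k-1}$ and $w = w'\vee t_k$. Since $t_1\ldots t_k$ is $T$-reduced, $t_1\ldots t_{k-1}$ is a $T$-reduced decomposition of $w'$, and $\ell(w) = \ell(w')+1$, so by the grading of $\ncw$ we get $\rk(w'\vee t_k) = \rk(w) = \rk(w')+1$. Hence the first part of the lemma applies to the pair $(w',t_k)$, yielding $f(w) = f(w')\vee f(t_k)$, and the inductive hypothesis gives $f(w') = f(t_1)\vee\cdots\vee f(t_{k-1})$; combining these finishes the induction.

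The main subtlety to get right is the implication $t\not\leq w \Rightarrow f(t)\not\subseteq f(w)$, which is exactly where injectivity of $f$ is used in an essential way — one cannot merely cite that $f$ is a poset map, since a priori a non-comparability could fail to be detected by $f$. Here injectivity rescues us: if $\mov(t)\subseteq\mov(w)$ then $f(w\vee t) = f(w)$ by the inclusion already proved together with the dimension bound reversed, forcing $w\vee t = w$ and contradicting the hypothesis. Everything else is bookkeeping with dimensions and the known compatibility of $\ell$ with rank in $\ncw$ and with $\dim$ in $\L(V)$.
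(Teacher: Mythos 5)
Your overall architecture is the same as the paper's: the inclusion $f(w)\vee f(t)\leq f(w\vee t)$ from monotonicity, a rank count using that $f$ is rank preserving, and the observation that a subspace of $f(w\vee t)$ of the same dimension must equal it; your induction for the second clause is a routine unwinding of the paper's one-line appeal to Lemma~\ref{lem:product_join}. The one place where you go beyond the paper --- justifying that $f(t)\not\subseteq f(w)$ --- is also the one place where your argument fails. Injectivity of a poset map does not imply that it reflects the order, and your fallback contradiction does not materialize: if $\mov(t)\subseteq\mov(w)$, then all that follows from the proved inclusion and the dimension count is $f(w)\vee f(t)=f(w)\subsetneq f(w\vee t)$, which contradicts nothing established so far (it would simply make the lemma false); it certainly does not force $f(w\vee t)=f(w)$. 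The step is true, but it needs a genuine input. Either invoke the Brady--Watt result that $f$ is a poset isomorphism onto its image, so that $f(t)\subseteq f(w)$ would give $t\leq w$ and hence $w\vee t=w$; or argue via Carter's Lemma~\ref{lem:carter} together with Observation~\ref{obs:basis}: choose a reduced word $t_1\ldots t_k$ for $w$, so $\mov(w)=\langle\alpha_{t_1},\ldots,\alpha_{t_k}\rangle$; if $\alpha_t$ lay in this span, then $t_1\ldots t_k t$ would not be reduced, so $\ell(wt)=k-1$ by the parity of reflection length, giving $t\leq w$, hence $w\vee t=w$, contradicting $\rk(w\vee t)=\rk(w)+1$. (To be fair, the paper's own proof silently asserts $\rk(f(w)\vee f(t))=\rk f(w)+\rk f(t)$, which is exactly this point; you correctly identified it as the crux but did not close it.)
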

	
	\begin{proof}
		Since $f$ is order preserving we get that $f(w), f(t) \leq f(w \vee t)$, hence $f(w) \vee f(t) \leq f(w \vee t)$. Moreover, $\rk(f(w) \vee f(t))=\rk f(w)+ \rk f(t) = \rk f(w \vee t)$, where the second equality holds by assumption. Uniqueness of the join then implies $f(w) \vee f(t) = f(w \vee t)$. The second assertion follows with Lemma \ref{lem:product_join}.
	\end{proof}
	
	\subsection{Boolean lattices in $\ncw$}
	We characterize a subset of maximal Boolean sublattices of $\ncw$ in terms of reduced decompositions of the Coxeter element. 

	A lattice is a \emph{Boolean lattice}, if it is isomorphic to the lattice $\B_n$ of subsets of $\{1, \ldots, n\}$ ordered by inclusion for some $n$. The set of all maximal Boolean lattices in $\ncw$ is denoted by $\B(\ncw)$.

	\begin{lemma}\label{lem:aptms}
		The map $h\colon \red(c) \to \B(\ncw)$ defined by 
		\[ 
		t_1\ldots t_n \mapsto \left\{\bigvee_{i\in I}t_i:\ I\subseteq \{1, \ldots, n\}\right\}
		\]
		assigns to every reduced decomposition of the Coxeter element $c$ a maximal Boolean lattice in $\ncw$.
	\end{lemma}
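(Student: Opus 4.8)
The plan is to verify that the map $\phi\colon \B_n \to \ncw$, $I \mapsto w_I \coloneqq \bigvee_{i \in I} t_i$, is an order isomorphism onto the set $h(t_1\ldots t_n)$, and then that this image cannot be enlarged to a bigger Boolean lattice in $\ncw$. That $h(t_1\ldots t_n)$ is contained in $\ncw$ is immediate: each $t_i$ is a subword of the $T$-reduced word $t_1\ldots t_n$ for $c$, hence $t_i \le c$ by the Subword Property (Proposition~\ref{prop:subword}), and since $\ncw$ is a lattice every join $w_I$ lies in $[\id,c] = \ncw$.

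The key step is to compute the moved space of $w_I$. Write $I = \{i_1 < \dots < i_k\}$ and $\alpha_i \coloneqq \alpha_{t_i}$. Since $t_1\ldots t_n \in \red(c)$, the set $\{\alpha_1,\dots,\alpha_n\}$ is a basis of $\mov(c)$ by Observation~\ref{obs:basis}; in particular the subfamily $\{\alpha_{i_1},\dots,\alpha_{i_k}\}$ is linearly independent, so $t_{i_1}\cdots t_{i_k}$ is $T$-reduced by Carter's Lemma (Lemma~\ref{lem:carter}). Its product $u$ then satisfies $u \le c$ by the Subword Property, hence $u \in \ncw$, and Lemma~\ref{lem:product_join} identifies $u = t_{i_1} \vee \dots \vee t_{i_k} = w_I$. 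Thus $t_{i_1}\cdots t_{i_k}$ is a $T$-reduced expression for $w_I$, and a second application of Observation~\ref{obs:basis} gives $\mov(w_I) = \spa\{\alpha_i : i \in I\}$.

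From this the statement about $\phi$ is pure linear algebra over the linearly independent family $\alpha_1,\dots,\alpha_n$: if $I \subseteq J$ then $w_I \le w_J$, while if $w_I \le w_J$ then $\mov(w_I) \subseteq \mov(w_J)$ (the map $f$ of Proposition~\ref{prop:emb} is order preserving), i.e.\ $\spa\{\alpha_i : i \in I\} \subseteq \spa\{\alpha_j : j \in J\}$, which forces $I \subseteq J$. Hence $\phi$ is an order isomorphism, so $h(t_1\ldots t_n) \cong \B_n$ is a Boolean lattice in $\ncw$. For maximality, note that the maximum of $h(t_1\ldots t_n)$ is $w_{\{1,\dots,n\}} = t_1 \vee \dots \vee t_n = c$ by Lemma~\ref{lem:product_join}, which is the maximum of $\ncw$, and its minimum is $\id$. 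If $h(t_1\ldots t_n)$ sat inside a Boolean lattice $L \cong \B_m$ in $\ncw$, then $L$ would share the bottom $\id$ and top $c$; each reflection $t_i$, being an atom of $\ncw$, would be an atom of $L$; and since the $t_i$ already join to $c$ in $L$, they must be all of the atoms of $L$, forcing $m = n$ and $L = h(t_1\ldots t_n)$. (One could instead invoke $\rk(\ncw) = \dim\mov(c) = n$ from Proposition~\ref{prop:emb}.)

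The step I expect to cost the most effort is the identification $\mov(w_I) = \spa\{\alpha_i : i \in I\}$ — more precisely, checking that the lattice-theoretic join $w_I$ is represented by the subword $t_{i_1}\cdots t_{i_k}$, which is exactly where Carter's Lemma, the Subword Property and Lemma~\ref{lem:product_join} have to be brought together. Everything after that, including the maximality argument, is bookkeeping with a basis.
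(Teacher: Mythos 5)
Your proof is correct, and its core is the same as the paper's: well-definedness of $h$ rests on the subword property (Proposition~\ref{prop:subword}) together with the compatibility of products and joins (Lemma~\ref{lem:product_join}). The difference is one of completeness rather than of route: the paper's proof is a single sentence that only addresses well-definedness, whereas you additionally verify that the $2^n$ joins $w_I$ are pairwise distinct and ordered exactly as the subsets $I$ are, and that the resulting copy of $\B_n$ is maximal. The extra ingredient you bring in for this is the moved-space calculus --- Observation~\ref{obs:basis} and Carter's Lemma~\ref{lem:carter} identify $\mov(w_I)$ with $\spa\{\alpha_i : i\in I\}$, and linear independence of the $\alpha_i$ then forces $I\subseteq J$ whenever $w_I\le w_J$ --- none of which appears in the paper's argument but all of which is available earlier in Section~\ref{subsec:movedspaces}. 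Your maximality argument is also sound; the cleaner of your two variants is the rank count ($\B_m\subseteq\ncw$ forces $m\le n$ by chain length, while containing a copy of $\B_n$ forces $m\ge n$), since it avoids having to compare joins computed in the ambient lattice with joins computed in the overlattice $L$. In short: same skeleton, but you prove strictly more than the paper writes down, and what you add is needed if one wants the word ``maximal'' in the statement to be justified on the page.
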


	\begin{proof}
		By the subword property~\ref{prop:subword} and the compatibility of taking products and the join in $\ncw$, which was shown in Lemma~\ref{lem:product_join}, this map is well-defined. 
	\end{proof}

	Note that in general $h$ is neither injective nor surjective. 
	For instance, $(12)(34)(24)$ and $(34)(12)(24)$ are two different reduced expressions for the Coxeter element $(1234)$ in the symmetric group $S_4$, but the corresponding Boolean lattices are clearly the same as the sets of reflections coincide.

	In type $A$ the map is surjective by Lemma 2.5 of \cite{gy}. But the surjectivity fails in other types, as can be seen in type $B$. For the type $B$ specific notation see Section \ref{sec:typeB}. For instance, the Boolean lattice spanned by the reflections $[1], [2], \ldots, [n]$ is \emph{not} contained in the image of $h$, since no product of the above reflections is the Coxeter element $c=[12\ldots n]$ of type $B_n$.

	\section{Non-crossing Partitions embed into spherical Buildings}\label{sec:embedding}
	
	In this section we prove our Main Theorem as stated in the introduction and show that $|\ncw|$ shares important properties with buildings, although (in general) it is not a building itself.
	We split the proof of the Main Theorem into several smaller statements and start by showing that there is an embedding into a building. 
	
	\begin{prop}\label{prop:oc-emb}
		The order complex $\ocncw$ of the non-crossing partition lattice associated to a finite Coxeter group $W$ of rank $n$ embeds into a spherical building $\Delta$ of type $A_{n-1}$. In particular, $\dim(\ocncw)=\dim(\Delta)$.
	\end{prop}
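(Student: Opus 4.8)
The strategy is to apply the order-complex functor to the rank-preserving poset embedding $f\colon\ncw\to\L(V)$, $\pi\mapsto\mov(\pi)$, of Proposition~\ref{prop:emb}, exploiting the classical fact that the order complex of the subspace lattice of an $n$-dimensional space is a building of type $A_{n-1}$. Concretely, by Proposition~\ref{prop:emb} (and its proof) the map $f$ is an injective, rank-preserving poset map with $V=\mov(c)\cong\R^n$, so that $\rk(\ncw)=\rk(\L(V))=\dim V=n$. Recall (see Appendix~\ref{cha:buildings}) that the order complex $\Delta$ of the proper part of $\L(V)$ — equivalently, the flag complex whose vertices are the proper nonzero subspaces of $V$ and whose simplices are the flags among them — is a spherical building of type $A_{n-1}$ whose chambers are the complete flags $V_1\subsetneq\dots\subsetneq V_{n-1}$ (with $\dim V_i=i$), so that $\dim\Delta=n-2$.

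First I would verify that $f$ descends to the proper parts. Being order-preserving, $f$ carries the minimum $\id$ and the maximum $c$ of $\ncw$ to $\mov(\id)=\{0\}$ and $\mov(c)=V$, and since $f$ is injective and rank-preserving, $f(\pi)=\{0\}$ forces $\pi=\id$ while $f(\pi)=V$ forces $\pi=c$ (an element of $\ncw$ of rank $n$ must be the top element). Hence $f$ restricts to an injective poset map $\overline{\ncw}\to\overline{\L(V)}$ between the proper parts, where $\overline{\ncw}=\ncw\setminus\{\id,c\}$ and $\ocncw$ is the order complex of $\overline{\ncw}$. Such a map sends every chain $\pi_0<\dots<\pi_k$ to a chain $f(\pi_0)\subsetneq\dots\subsetneq f(\pi_k)$ of proper nonzero subspaces — the inclusions being strict by injectivity — and it sends distinct elements to distinct subspaces; it therefore induces a simplicial map $|f|\colon\ocncw\to\Delta$ taking $k$-simplices to $k$-simplices. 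A simplicial map that is injective on vertices is an embedding onto a subcomplex, which yields the claimed embedding of $\ocncw$ into $\Delta$.

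The equality of dimensions is then immediate: the maximal simplices of $\ocncw$ are the images of the maximal chains of $\overline{\ncw}$, hence have dimension $\rk(\ncw)-2$, and this equals $\rk(\L(V))-2=n-2=\dim\Delta$ by Proposition~\ref{prop:emb}. I do not expect a genuine obstacle here: all the substantive input — well-definedness, injectivity, and rank-preservation of $\pi\mapsto\mov(\pi)$ — is already supplied by Proposition~\ref{prop:emb}, and the identification of the subspace flag complex with the building of type $A_{n-1}$ is standard. The only points requiring care are the bookkeeping that $f$ respects the extremal elements, so that it truly restricts to proper parts, and — should one later want the image to be a \emph{full} subcomplex, as the Main Theorem will — strengthening injectivity to the assertion that $f$ is an order embedding.
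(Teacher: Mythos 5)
Your proposal is correct and follows the same route as the paper: apply the order-complex functor to the injective, rank-preserving map $f$ of Proposition~\ref{prop:emb} and invoke Proposition~\ref{prop:LV_building} to identify $|\L(V)|$ with a building of type $A_{n-1}$. You simply spell out the details (restriction to proper parts, chains mapping to strict flags, the dimension count) that the paper leaves implicit.
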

	
	\begin{proof}
		The map $f$ of Proposition~\ref{prop:emb} induces a map $\bar f:\ocncw\to \vert \L(V)\vert$ on the order complexes. By Proposition~\ref{prop:LV_building} the order complex of $\L(V)$ is a spherical building $\Delta$ of type $A_{n-1}$. As $f$ is injective and rank preserving, the induced map $\bar f$ is an embedding of simplicial complexes. The equality of dimensions is implied by the equality of ranks, which is guaranteed by Proposition~\ref{prop:emb}.
	\end{proof}
	
	The following two subsections contain, besides additional results, the proofs of the remaining properties of this embedding stated in the main theorem.  We summarize the proof of the Main Theorem here.  
	
	\begin{proof}[Proof of Main Theorem]
		By Proposition \ref{prop:oc-emb}, the complex $\ocncw$ embeds into the building $\Delta=|\L(V)|$. From the same proposition it follows that $\dim(\ocncw)=\dim(\Delta)$ and since $\ocncw$ is a chamber complex by Lemma \ref{lem:cc}, it is isomorphic to a chamber subcomplex of $\Delta$.
		
		The fact that the image of $\ocncw$ in the building is a union of apartments is shown in Corollary \ref{cor:ncw_union_aptm}.
		
		For the assertion on the homotopy type see Remark \ref{rem:wedge}.
		In type $A$, this also follows directly from Proposition \ref{prop:ncpn_union}.
		
		Finally, the fact about crystallographic Coxeter groups is shown in Proposition~\ref{prop:crys}. 
	\end{proof}

	\begin{example}\label{ex:first}
		Figure~\ref{fig:labeling} on page~\pageref{fig:labeling} shows the image of the order complex of the non-crossing partitions  $\nc(S_4)$ sitting inside the spherical building of rank 2 over $\F_2$.    
		Each vertex of $|\ncp_4|$ is labeled by the non-crossing partition that is its preimage. For the pictorial description of non-crossing partitions see Section~\ref{Sec:typeA}. The higher rank simplices in the image of $|\ncp_4|$ in the building are uniquely  determined by its vertices (as $|\ncp_4|$ is a flag complex). They correspond to chains of non-crossing partitions whose length is the dimension of the simplex. 
	\end{example}

	\subsection{Building-like structure of $\ocncw$}\label{subsec:bls}
	
	The \emph{chambers} are the maximal simplices in $\ocncw$ and we write $\chambers(W)$ for the set of all chambers in $\ocncw$. In Definition~\ref{def:NCPapartments} we will introduce the notion of apartments in $\ocncw$ and relate chambers and apartments to reduced factorizations of the Coxeter element. We prove in Lemma~\ref{lem:cc} that $\ocncw$ is a chamber complex, which can be written as a union of apartments, see Corollary \ref{cor:ncw_union_aptm}. 
	
	\begin{remark}\label{rem:bij_chains-chambers}
		By definition of the order complex, the maximal chains in $\ncw$ are in bijection with the maximal simplices  in $\ocncw$, i.e. its chambers,  via
		\[
		\tilde{g}\colon (\id \leq t_1 \leq t_1t_2 \leq \ldots \leq t_1\ldots t_n) \mapsto \{t_1, t_1t_2, \ldots, t_1\ldots t_{n-1}\}.
		\]
	\end{remark}

	\begin{lemma}\label{lem:chambers}
		The map $g'\colon \red(c) \to \chambers(W)$ defined by
		\[
		g'(t_1\ldots t_n) = \{t_1, t_1t_2, \ldots, t_1\ldots t_{n-1}\}
		\]
		is a bijection of the set of reduced decompositions of the Coxeter element $c$ and the set of chambers in $\ocncw$.
	\end{lemma}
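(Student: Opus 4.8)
The plan is to recognise $g'$ as the composite of two bijections that are already at our disposal. By Lemma~\ref{lem:max_chains}, the map $g\colon\red(c)\to\C(W)$ given by $g(t_1\ldots t_n)=(\id\leq t_1\leq t_1t_2\leq\ldots\leq t_1\ldots t_n)$ is a bijection from the set of reduced decompositions of $c$ onto the set $\C(W)$ of maximal chains in $\ncw$. By Remark~\ref{rem:bij_chains-chambers}, the map $\tilde g$ that deletes the bottom element $\id$ and the top element $c=t_1\ldots t_n$ from a maximal chain is a bijection from $\C(W)$ onto the set $\chambers(W)$ of chambers of $\ocncw$. First I would check that $g'=\tilde g\circ g$: applying $g$ to a reduced word $t_1\ldots t_n$ produces the chain $(\id\leq t_1\leq\ldots\leq t_1\ldots t_n)$, and $\tilde g$ sends that chain precisely to $\{t_1,t_1t_2,\ldots,t_1\ldots t_{n-1}\}=g'(t_1\ldots t_n)$. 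Since a composite of bijections is a bijection, this is all that is needed; in particular it also shows that $g'$ indeed lands in $\chambers(W)$.

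The one point that deserves a word is why the maximal simplices of $\ocncw$ are in bijection with the \emph{maximal} chains of $\ncw$ rather than merely with the chains of its proper part: this holds because $\ncw$ is a graded lattice (see \cite[Thm.~7.8]{bra_watt_lattice}), so every maximal chain has the same number $n+1$ of elements and hence every chamber of $\ocncw$ has exactly $n-1$ vertices. Once this is recorded there is no genuine obstacle, and the statement becomes a formal consequence of Lemma~\ref{lem:max_chains} and Remark~\ref{rem:bij_chains-chambers}.

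Should one prefer a self-contained argument in place of the composition, one would proceed directly. For injectivity, the set $g'(t_1\ldots t_n)$ is a chain in $\ncw$, so it comes equipped with its order, and one recovers $t_i=(t_1\ldots t_{i-1})^{-1}(t_1\ldots t_i)$ for $1\leq i<n$ together with $t_n=(t_1\ldots t_{n-1})^{-1}c$. For surjectivity, an arbitrary chamber of $\ocncw$ is a maximal chain $w_1<\cdots<w_{n-1}$ of the proper part of $\ncw$, which by gradedness extends uniquely to a maximal chain $\id=w_0<w_1<\cdots<w_{n-1}<w_n=c$ of $\ncw$; setting $t_i:=w_{i-1}^{-1}w_i$ yields, via the inverse of $g$ from Lemma~\ref{lem:max_chains}, a reduced decomposition $t_1\ldots t_n\in\red(c)$ for which $g'(t_1\ldots t_n)$ is the chamber we started with.
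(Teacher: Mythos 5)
Your proposal is correct and follows exactly the paper's own argument: $g'$ is the composition of the bijection $g$ from Lemma~\ref{lem:max_chains} with the bijection $\tilde g$ from Remark~\ref{rem:bij_chains-chambers}. The extra remarks on gradedness and the self-contained alternative are fine but not needed beyond what the paper records.
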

	
	\begin{proof}
		The map $g'$ is the composition of the bijections $g$ from Lemma \ref{lem:max_chains} and $\tilde{g}$ from above.
	\end{proof}
	
	The shifts $\sigma_i$ on the reduced decompositions of the Coxeter element induce maps on $\chambers(W)$ as follows. For any chamber $C\in\chambers(W)$ put $\sigma(C) \coloneqq g'(\sigma(g'^{-1}(C)))$ and define $\sigma_i'$ analogously.
	The chamber $\sigma(C)$ corresponds to the reduced expression that arises from the reduced decomposition defining $C$ by applying the shift $\sigma$. From this we obtain the following corollary.
	
	\begin{cor}\label{cor:adj_chambers}
		For all $C \in \chambers(W)$, $1\leq i < n$ and $k \geq 1$, the chamber $\sigma_i^k(C)$  is adjacent to $C$, i.e. they have all but one vertex in common. The same holds for $\sigma_i'$.
	\end{cor}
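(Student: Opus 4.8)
The plan is to reduce Corollary~\ref{cor:adj_chambers} to Corollary~\ref{cor:adj_chains} by means of the chain-to-chamber dictionary set up in Remark~\ref{rem:bij_chains-chambers} and Lemma~\ref{lem:chambers}. Put $t_1\ldots t_n = g'^{-1}(C) \in \red(c)$. By definition $\sigma_i^k(C) = g'(\sigma_i^k(t_1\ldots t_n))$, and since $g' = \tilde{g}\circ g$, the vertex set of $\sigma_i^k(C)$ is obtained from the maximal chain $g(\sigma_i^k(t_1\ldots t_n))$ in $\ncw$ by deleting its two endpoints $\id$ and $c$. Both $g(t_1\ldots t_n)$ and $g(\sigma_i^k(t_1\ldots t_n))$ are maximal chains running from $\id$ to $c$, so they share these two endpoints; hence the number of vertices in which the chambers $C$ and $\sigma_i^k(C)$ differ equals the number of elements in which the two chains differ, which by Corollary~\ref{cor:adj_chains} is at most one. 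Therefore $C$ and $\sigma_i^k(C)$ have all but at most one vertex in common, i.e. they are adjacent. The case of $\sigma_i'$ is identical, using the second assertion of Corollary~\ref{cor:adj_chains}.

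For completeness I would also record the direct combinatorial reason, which pins down the location of the differing vertex. The vertices of $g'(t_1\ldots t_n)$ are the partial products $p_j = t_1 t_2\cdots t_j$ for $1\leq j\leq n-1$. By Lemma~\ref{lem:shift} the shift $\sigma_i$ alters only the letters in positions $i$ and $i+1$ of a $T$-reduced word, and a one-line computation using $t_i^2 = \id$ shows it leaves the product of those two letters unchanged, namely $(t_i t_{i+1} t_i)\cdot t_i = t_i t_{i+1}$, with the symmetric identity for $\sigma_i'$. An easy induction on $k$ then shows that $\sigma_i^k$ likewise affects only positions $i$ and $i+1$ and preserves the product $t_i t_{i+1}$ occupying those positions. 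Consequently every partial product $p_j$ with $j\neq i$ is unchanged under $\sigma_i^k$, so $C$ and $\sigma_i^k(C)$ can differ only in the vertex $p_i = t_1\cdots t_i$. Since $\sigma_i' = \sigma_i^{-1}$, the statement for $\sigma_i'$ is already contained in that for $\sigma_i$.

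There is no real obstacle here: the corollary is essentially a translation of Corollary~\ref{cor:adj_chains} into the language of chambers, together with the observation that passing to the order complex only forgets the common endpoints $\id$ and $c$. The one point that deserves a line of care is the inductive step showing that iterating the shift keeps the two affected positions multiplying to $t_i t_{i+1}$; this is immediate once one notes that $\sigma_i$ fixes all positions outside $\{i,i+1\}$ and acts on positions $i,i+1$ by a product-preserving rewrite, but it is worth stating explicitly so that the conclusion holds for every $k\geq 1$ rather than only for $k=1$.
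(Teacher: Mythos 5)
Your first paragraph is exactly the paper's argument: transport Corollary~\ref{cor:adj_chains} through the bijection $\tilde{g}$ between maximal chains and chambers, so the proposal is correct and takes essentially the same route. Your second paragraph merely unpacks the ``simple computation'' behind Corollary~\ref{cor:adj_chains} itself (the shift preserves the product of the letters in positions $i$ and $i+1$), which is welcome detail but not a different proof.
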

	
	\begin{proof}
		The chains in $\ncw$ corresponding to $C$ and $\sigma_i^k(C)$ differ in exactly one element by Corollary \ref{cor:adj_chains}. Hence, the corresponding chambers differ by one element as $\tilde{g}$ maps a chain to the simplex on its elements. Similarly obtain the statement for $\sigma_i'$.
	\end{proof} 
	
	Figure~\ref{fig:adj_chambers} shows an example of chambers sharing a codimension one face and the corresponding reduced decomposition. Here $n=3$ and $i=2$.
	
	\begin{figure}[h]
		\begin{center}
			\def\svgwidth{6cm}
			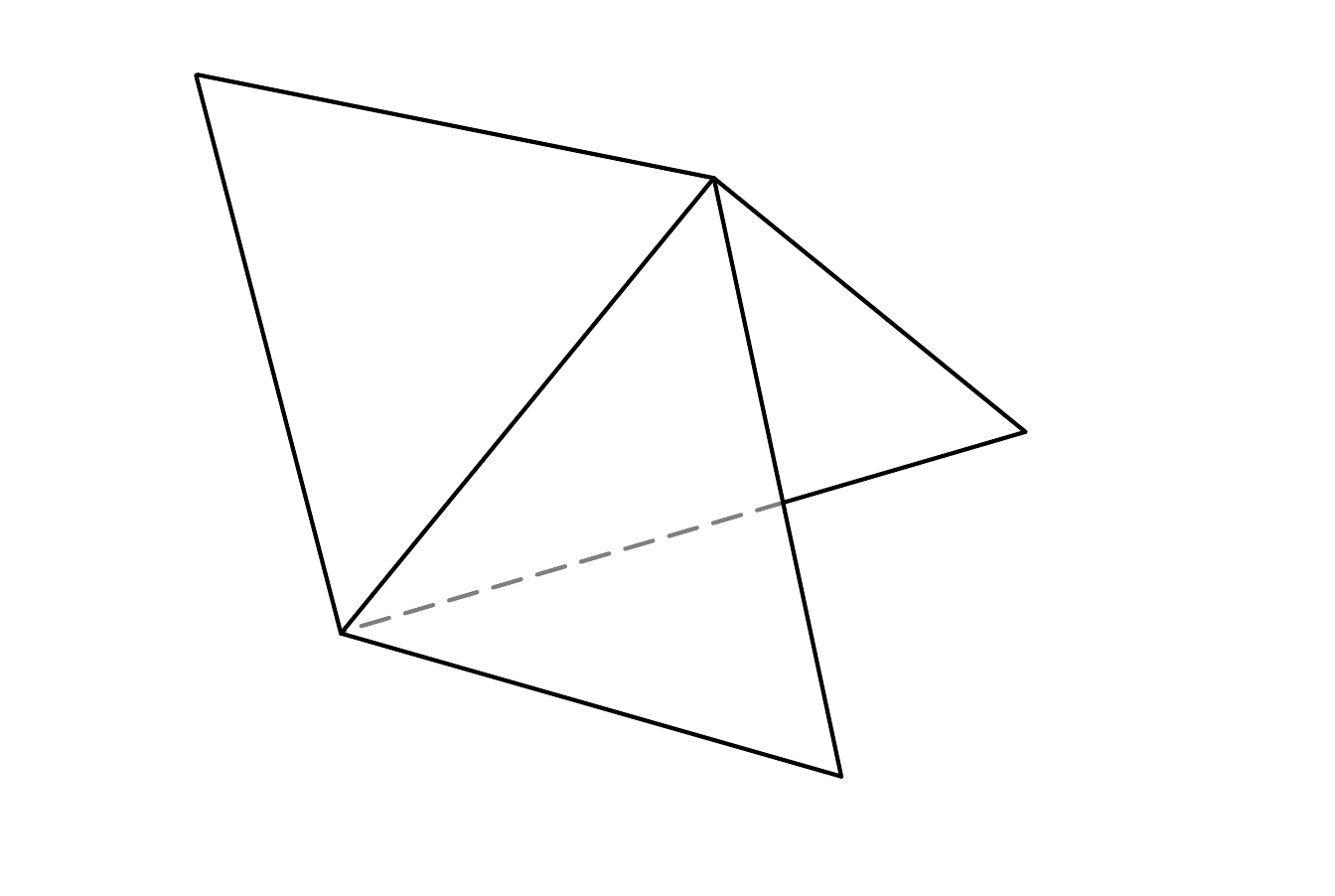
			\caption{Three adjacent chambers $C, \sigma_2(C)$ and $\sigma'_2(C)$ given by the reduced decompositions $t_1t_2t_3$, $t_1(t_2t_3t_2)t_2$ and $t_1t_3(t_3t_2t_3)$, respectively.}
			\label{fig:adj_chambers}
		\end{center}
	\end{figure}
	
	The following is an easy consequence of the fact that the non-crossing partition lattices are shellable \cite{abw}. It also can be shown directly using the transitivity of the Hurwitz action.
	
	\begin{lemma}\label{lem:cc}
		The complex $\ocncw$ is a chamber complex.
	\end{lemma}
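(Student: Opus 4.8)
The plan is to verify directly the two defining conditions of a chamber complex: that $\ocncw$ is \emph{pure}, meaning all its maximal simplices have the same dimension, and that it is \emph{gallery connected}, meaning any two maximal simplices can be joined by a gallery. Purity I would deduce from the lattice structure: $\ncw$ is a graded lattice (see the discussion following Definition~\ref{def:ncw}, or \cite{bra_watt_lattice}), so all maximal chains of $\ncw$ have the same length, and under the bijection $\tilde g$ of Remark~\ref{rem:bij_chains-chambers}, which sends a maximal chain of $\ncw$ to the simplex on its interior elements, these chains correspond exactly to the maximal simplices of $\ocncw$. Hence all maximal simplices of $\ocncw$ have the same number of vertices, so $\ocncw$ is pure and its maximal simplices are precisely the chambers in $\chambers(W)$.

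For gallery connectedness I would argue via the Hurwitz action. By Lemma~\ref{lem:chambers} the map $g'$ is a bijection $\red(c)\to\chambers(W)$, and by Corollary~\ref{cor:adj_chambers} each shift $\sigma_i^{\pm1}$ carries a chamber to an adjacent one, i.e. one sharing a codimension-one face. Now take two chambers $C,C'\in\chambers(W)$ and write $C=g'(r)$, $C'=g'(r')$ with $r,r'\in\red(c)$. Since the Hurwitz action of the braid group on $\red(c)$ is transitive \cite[Prop.~1.6.1]{bes}, there is a finite sequence of shifts $\sigma_i^{\pm1}$ taking $r$ to $r'$; applying $g'$ and invoking Corollary~\ref{cor:adj_chambers} at each step produces a gallery from $C$ to $C'$. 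Together with purity this shows that $\ocncw$ is a chamber complex.

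Alternatively, one can bypass the Hurwitz action altogether: the non-crossing partition lattices are shellable \cite{abw}, so $\ocncw$ is a shellable simplicial complex, and any pure shellable complex has connected facet-adjacency graph and is therefore gallery connected; combined with the purity established above this again yields the claim. I do not expect a genuine obstacle here, since everything is assembled from results already at hand. The only points that need a little care are bookkeeping ones: checking that ``adjacent'' in Corollary~\ref{cor:adj_chambers} really means sharing a codimension-one face of $\ocncw$ (it does, as $\tilde g$ sends a chain to the simplex on its interior elements and two maximal chains differing in a single element give maximal simplices agreeing in all but one vertex), and confirming that purity is genuinely invoked so that ``maximal simplex'' and ``top-dimensional simplex'' coincide.
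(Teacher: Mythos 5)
Your proof is correct and follows exactly the two routes the paper itself indicates (it states the lemma without a written-out proof, remarking only that it follows from shellability \cite{abw} or from the transitivity of the Hurwitz action): purity from gradedness of $\ncw$ plus gallery connectedness via $g'$, Corollary~\ref{cor:adj_chambers} and Hurwitz transitivity. You have simply filled in the details the authors leave to the reader, and both your main argument and your shellability alternative match their intended proofs.
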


	\begin{definition}\label{def:NCPapartments}
		An \emph{apartment} in $\ocncw$ is a subcomplex $A$ such that $A = |B|$ for a Boolean lattice $B \in \B(\ncw)$. The set of apartments in $\ocncw$ is denoted by $\A(W)$.  
	\end{definition}
	
	Note that every apartment is isomorphic (as abstract simplicial complex) to the Coxeter complex of type $A_{n-1}$. Therefore $\A(W)$ is a subset of the set of apartments in the building $\Delta$.    
	Hence, we get a bijection
	\[
	\tilde{h}\colon \B(\ncw) \to \A(W), \quad B \to |B|.
	\]

	There are apartments in $\ocncw$ which can be described in terms of reduced decompositions of the Coxeter element.

	\begin{lemma}\label{lem:red_dec_aptm}
		The map $h'\colon \red(c) \to \A(W)$ defined by
		\[
		t_1\ldots t_n \mapsto \left|\left\{\bigvee_{i\in I}t_i:\ I\subseteq \{1, \ldots, n\}\right\}\right|
		\]
	
	assigns to every reduced decomposition of the Coxeter element $c$ an apartment contained  in $\ocncw$.
	\end{lemma}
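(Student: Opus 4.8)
The plan is to recognize that $h'$ is simply the composition of two maps we already have in hand. By Lemma~\ref{lem:aptms}, the assignment
\[
h\colon \red(c)\to\B(\ncw),\qquad t_1\ldots t_n\mapsto\Bigl\{\textstyle\bigvee_{i\in I}t_i : I\subseteq\{1,\ldots,n\}\Bigr\}
\]
is well-defined, i.e.\ for every reduced decomposition of $c$ the displayed set is a \emph{maximal} Boolean sublattice of $\ncw$ (with bottom element $\id$ and, by Lemma~\ref{lem:product_join}, top element $t_1\vee\cdots\vee t_n=c$), hence an element of $\B(\ncw)$. On the other hand, the bijection $\tilde h\colon\B(\ncw)\to\A(W)$, $B\mapsto|B|$, recorded right after Definition~\ref{def:NCPapartments}, sends each maximal Boolean lattice to its order complex; this order complex is a subcomplex of $\ocncw$ because every chain in a sublattice $B\subseteq\ncw$ is a chain in $\ncw$, and by Definition~\ref{def:NCPapartments} such a subcomplex is by definition an apartment of $\ocncw$.

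Putting these together, I would observe that
\[
\tilde h\bigl(h(t_1\ldots t_n)\bigr)=\Bigl|\{\textstyle\bigvee_{i\in I}t_i : I\subseteq\{1,\ldots,n\}\}\Bigr|=h'(t_1\ldots t_n),
\]
so $h'=\tilde h\circ h$. As a composition of well-defined maps $h'$ is well-defined, and its image is contained in $\A(W)$, which is precisely the assertion of the lemma. Equivalently, phrased without the factorization language: by Lemma~\ref{lem:aptms} the set $B:=\{\bigvee_{i\in I}t_i : I\subseteq\{1,\ldots,n\}\}$ is a maximal Boolean lattice in $\ncw$, so $|B|$ is an apartment contained in $\ocncw$ by Definition~\ref{def:NCPapartments}, and $h'(t_1\ldots t_n)=|B|$ by construction.

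I do not expect a genuine obstacle here; the argument is a short bookkeeping observation built entirely on Lemma~\ref{lem:aptms} and Definition~\ref{def:NCPapartments}. The only things to verify are that the output of Lemma~\ref{lem:aptms} really is an element of $\B(\ncw)$ (so Definition~\ref{def:NCPapartments} applies verbatim) and that passing from a sublattice to its order complex respects inclusion of simplicial complexes — both immediate.
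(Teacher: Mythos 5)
Your argument is correct and is essentially identical to the paper's own proof, which likewise observes that $h'$ is the composition of the map $h$ from Lemma~\ref{lem:aptms} with the bijection $\tilde h\colon\B(\ncw)\to\A(W)$. The extra bookkeeping you supply (checking that the output of $h$ lies in $\B(\ncw)$ and that passing to order complexes respects inclusion) is exactly what the paper leaves implicit.
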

	
	In the case $W$ is of type $A_n$, this is Lemma $2.5$ of \cite{gy}.
	
	\begin{proof}
		Composition of the map $h$ from Lemma \ref{lem:aptms} and the bijection $\tilde{h}$ from above gives a map from the reduced decompositions of $c$ into $\A(W)$. 
	\end{proof}

	Hence every apartment of $\ocncw$ which is in the image of $h'$
 is uniquely determined by a set of $n$ reflections $\{t_1, \ldots, t_n\}$ such that there is a permutation $\tau\in S_n$ of indices such that $t_{\tau(1)}\ldots t_{\tau(n)}=c$. Such a set of reflections corresponds to a basis $\{\alpha_{t_1}, \ldots, \alpha_{t_n}\}$ of $V$.
	
	\begin{lemma}\label{lem:ch_in_aptm}
		Let $A \in \A(W)$ be the apartment corresponding to a reduced decomposition $t_1\ldots t_n$ of $c$.
		 The set of chambers of $A$ is given by
		\[
		\chambers(A)=\left\{\left|\{t_{\tau(1)}, t_{\tau(1)}\vee t_{\tau(2)}, \ldots,  t_{\tau(1)}\vee\ldots\vee t_{\tau(n)}\}\right| :\ \tau \in S_n\right\}.
		\]
	\end{lemma}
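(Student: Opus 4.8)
The plan is to identify the chambers of $A$ with the maximal chains of a Boolean lattice and then read those off explicitly. By the construction of the map $h$ in Lemma~\ref{lem:aptms}, the apartment $A$ equals $|B|$ for the Boolean lattice $B=\{\bigvee_{i\in I}t_i:\ I\subseteq\{1,\dots,n\}\}\in\B(\ncw)$, and the assignment $\varphi\colon\B_n\to B$, $I\mapsto\bigvee_{i\in I}t_i$, is a lattice isomorphism: it is surjective by definition, it preserves joins since $\bigvee_{i\in I\cup J}t_i=(\bigvee_{i\in I}t_i)\vee(\bigvee_{i\in J}t_i)$, and it is injective because, under the embedding $f$ of Proposition~\ref{prop:emb} and Lemma~\ref{lem:join_assoc}, one has $f(\bigvee_{i\in I}t_i)=\spa\{\alpha_{t_i}:i\in I\}$ while $\{\alpha_{t_1},\dots,\alpha_{t_n}\}$ is a basis of $V$ by Observation~\ref{obs:basis}, so distinct index sets give distinct subspaces. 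In particular $t_1,\dots,t_n$ are exactly the atoms of $B$ and the maximum of $B$ is $\bigvee_{i=1}^n t_i=c$ by Lemma~\ref{lem:product_join}.

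Next, by the definition of the order complex (cf.\ Remark~\ref{rem:bij_chains-chambers}), the chambers of $A=|B|$ are the simplices spanned by the maximal chains of the proper part $B\setminus\{\id,c\}$. I would transport this description through $\varphi$: a maximal chain in the proper part of $\B_n$ is exactly a flag $\{a_1\}\subsetneq\{a_1,a_2\}\subsetneq\dots\subsetneq\{a_1,\dots,a_{n-1}\}$ determined by an ordering $a_1,\dots,a_n$ of $\{1,\dots,n\}$, and such orderings are parametrised bijectively by permutations $\tau\in S_n$ via $a_j=\tau(j)$. Applying $\varphi$ to the flag attached to $\tau$ yields the maximal chain $t_{\tau(1)}<t_{\tau(1)}\vee t_{\tau(2)}<\dots<t_{\tau(1)}\vee\dots\vee t_{\tau(n-1)}$ of the proper part of $B$, whose spanning simplex is $\{t_{\tau(1)},t_{\tau(1)}\vee t_{\tau(2)},\dots,t_{\tau(1)}\vee\dots\vee t_{\tau(n-1)}\}$; since $t_{\tau(1)}\vee\dots\vee t_{\tau(n)}=c\notin\ocncw$, this is precisely the set appearing in the displayed formula. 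Letting $\tau$ range over $S_n$ exhausts all such chains, which gives the claimed equality.

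I do not expect a serious obstacle here: the only point requiring a little care is the first paragraph, namely checking that $h$ genuinely produces a rank-$n$ Boolean lattice whose atoms are $t_1,\dots,t_n$ — this is where linear independence of the roots $\alpha_{t_i}$ (equivalently Carter's Lemma~\ref{lem:carter} applied to subwords of $t_1\ldots t_n$) enters. After that, the statement is just the standard description of the chambers of the Coxeter complex of type $A_{n-1}$ as the linear orderings of a fixed set of atoms, relabelled by joins of reflections.
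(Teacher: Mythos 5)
Your proof is correct and follows the same route as the paper's: identify $A$ with the order complex of the Boolean lattice generated by $t_1,\dots,t_n$ and read off its maximal chains as orderings of the atoms, indexed by $S_n$. The paper states this in two sentences; you additionally verify that $h$ really produces a rank-$n$ Boolean lattice with atoms $t_1,\dots,t_n$ (via linear independence of the roots) and correctly note that the top element $c$ is dropped from the simplex, both of which are details the paper leaves implicit.
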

	
	\begin{proof}
		The maximal chains in a Boolean lattice are given by orderings (i.e. permutations) of the rank one elements. As the elements of rank one in $A$ (as Boolean lattice) are exactly $t_1, \ldots, t_n$, the assertion follows. 
	\end{proof}

	\begin{lemma}\label{lem:ch_aptm}
		Let $t_1\ldots t_n$ be in $\red(c)$. The chamber $g'(t_1 \ldots t_n) \in \chambers(W)$ is contained in the apartment $h'(t_1\ldots t_n) \in \A(W)$.
	\end{lemma}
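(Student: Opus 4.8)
The plan is to show that the chamber $g'(t_1\ldots t_n)$, which by Lemma~\ref{lem:chambers} is the simplex $\{t_1, t_1t_2, \ldots, t_1\ldots t_{n-1}\}$, appears in the list of chambers of $A = h'(t_1\ldots t_n)$ described in Lemma~\ref{lem:ch_in_aptm}. That description says the chambers of $A$ are exactly the simplices $\{t_{\tau(1)}, t_{\tau(1)}\vee t_{\tau(2)}, \ldots, t_{\tau(1)}\vee\ldots\vee t_{\tau(n)}\}$ for $\tau \in S_n$. So it suffices to treat the case $\tau = \mathrm{id}$, i.e. to verify that
\[
\{t_1, t_1\vee t_2, \ldots, t_1\vee\ldots\vee t_{n-1}\} \;=\; \{t_1, t_1t_2, \ldots, t_1\ldots t_{n-1}\}
\]
as subsets of $\ncw$ (hence as simplices of $\ocncw$).

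The key step is to identify the partial products $t_1\cdots t_k$ with the partial joins $t_1\vee\cdots\vee t_k$ for each $1\le k\le n-1$. First I would note that each prefix $t_1\cdots t_k$ lies in $\ncw$: since $t_1\ldots t_n \in \red(c)$ is a $T$-reduced expression for $c$, any prefix is a $T$-reduced expression for the element it spells, and a prefix is in particular a subword of that reduced expression, so $t_1\cdots t_k \le c$ by the subword property (Proposition~\ref{prop:subword}); equivalently this is exactly the statement that $g$ in Lemma~\ref{lem:max_chains} lands in maximal chains of $\ncw$. Then, since $t_1\cdots t_k$ lies in $\ncw$ and $t_1\ldots t_k$ is a $T$-reduced expression for it, Lemma~\ref{lem:product_join} applies directly and gives $t_1\cdots t_k = t_1\vee\cdots\vee t_k$. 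Doing this for every $k$ from $1$ to $n-1$ yields the desired equality of vertex sets, and hence $g'(t_1\ldots t_n) \in \chambers(A)$.

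I expect the main (minor) obstacle to be purely bookkeeping: making sure the indexing in Lemma~\ref{lem:ch_in_aptm} lines up, namely that the apartment $A$ there is built from the \emph{same} ordered tuple $t_1,\ldots,t_n$ that defines our chamber, so that the choice $\tau = \mathrm{id}$ in that lemma's parametrization reproduces precisely our prefixes — there is no subtlety beyond checking that the rank-one elements of $A$, as a Boolean lattice, are $\{t_1,\ldots,t_n\}$, which is immediate from the definition of $h'$ via $h$ in Lemma~\ref{lem:aptms}. Once that is observed, the proof is a one-line invocation of Lemma~\ref{lem:product_join} applied to each prefix.
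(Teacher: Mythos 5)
Your proposal is correct and follows the same route as the paper: the paper's proof likewise rewrites the chamber's vertices $\{t_1, t_1t_2, \ldots\}$ as the partial joins $\{t_1, t_1\vee t_2, \ldots\}$ (implicitly via Lemma~\ref{lem:product_join}) and then cites Lemma~\ref{lem:ch_in_aptm} with $\tau=\mathrm{id}$. You simply spell out the justification (prefixes of a reduced expression are reduced and lie in $\ncw$) that the paper leaves implicit.
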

	
	\begin{proof}
		The chamber $g'(t_1 \ldots t_n)$ is given by 
		\[
		\{t_1, t_1 t_2, \ldots, t_1\ldots t_n\}=\{t_1, t_1\vee t_2, \ldots, t_1\vee\ldots\vee t_n\},
		\]
		which is clearly contained in the apartment $h'(t_1 \ldots t_n)$ by Lemma \ref{lem:ch_in_aptm}.
	\end{proof}
	
	As an easy consequence, we get the following.
	
	\begin{cor}\label{cor:ncw_union_aptm}
		For every finite Coxeter group $W$, the complex $\ocncw$ is a union of apartments.
	\end{cor}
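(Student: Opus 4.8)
The plan is to assemble the corollary directly from the lemmas of this subsection; as the paper indicates, it is an easy consequence. The key point is that, being a chamber complex by Lemma~\ref{lem:cc}, $\ocncw$ is in particular pure, so every simplex is a face of some chamber and hence $\ocncw = \bigcup_{C \in \chambers(W)} C$ as a simplicial complex.

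Next I would use the bijection $g'\colon \red(c) \to \chambers(W)$ of Lemma~\ref{lem:chambers} to write every chamber as $C = g'(t_1\ldots t_n)$ for some $T$-reduced decomposition $t_1\ldots t_n$ of $c$, and then apply Lemma~\ref{lem:ch_aptm}, which says that this chamber is contained in the apartment $h'(t_1\ldots t_n) \in \A(W)$. Thus every chamber of $\ocncw$ lies in some apartment, and therefore
\[
\ocncw \;=\; \bigcup_{C \in \chambers(W)} C \;\subseteq\; \bigcup_{t_1\ldots t_n \in \red(c)} h'(t_1\ldots t_n) \;\subseteq\; \bigcup_{A \in \A(W)} A \;\subseteq\; \ocncw .
\]
Hence all these inclusions are equalities, and $\ocncw$ is the union of the apartments $h'(t_1\ldots t_n)$, $t_1\ldots t_n \in \red(c)$; in particular it is a union of apartments.

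I do not expect any genuine obstacle here: the substantive content has already been carried out in Lemma~\ref{lem:cc} (via shellability of the non-crossing partition lattice, or alternatively via transitivity of the Hurwitz action, which also guarantees that $g'$ exhausts $\chambers(W)$) and in Lemma~\ref{lem:ch_aptm}. The only thing to be careful about is that the full strength of "chamber complex" is invoked, namely that every simplex is contained in a chamber; granting that, the corollary is a one-line chase through $g'$ and $h'$.
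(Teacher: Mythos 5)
Your argument is correct and follows exactly the paper's own proof: decompose $\ocncw$ into its chambers using the chamber-complex property from Lemma~\ref{lem:cc}, then place each chamber into an apartment via Lemma~\ref{lem:ch_aptm} (through the bijection $g'$). The explicit chain of inclusions is just a more detailed write-up of the same two-step argument.
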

	
	\begin{proof}
		Since $\ocncw$ is a chamber complex, it is the union of its chambers. Since every chamber is contained in an apartment of $\ocncw$ by Lemma \ref{lem:ch_aptm}, it is a union of apartments.
	\end{proof}
	
	\begin{remark}\label{rem:wedge}
		It is an immediate consequence of Corollary \ref{cor:ncw_union_aptm} that $|\ncw|$ is homotopy equivalent to a wedge of spheres: $|\ncw|$ is an $(n-3)$-connected complex of dimension $n-2$, which is by a standard argument in topology, see for example \cite[Thm. 8.6.2]{tom}, homotopy equivalent to a wedge of spheres of dimension $n-2$. The homotopy type of $|\ncw|$ is also a direct consequence of the shellability of $\ncw$ \cite{abw}.
	\end{remark}
	
	From the embedding of the order complex $\ocncw$ into a spherical building $\Delta$, we immediately obtain a retraction onto any apartment it contains. 
	A similar projection (which was explicitly constructed) was used by Adin and Roichman \cite{ar} when they computed the radius of the Hurwitz graph in type $A$. 
	
	\begin{prop}
		For every chamber $C$ and every apartment $A$ in $\ocncw$ containing $C$ we obtain a simplicial map $\rho_{A,C}\colon\ocncw \to A$  that is distance non-increasing and the restriction of $\rho_{A,C}$ to an apartment $A'$ containing $C$ is an isomorphism onto $A$.  
	\end{prop}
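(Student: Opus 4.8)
The plan is to obtain $\rho_{A,C}$ by restricting the standard building retraction onto an apartment centered at a chamber. Recall that in any building $\Delta$, for a chamber $C$ and an apartment $A$ containing $C$, there is a canonical retraction $\rho_{\Delta, A, C}\colon \Delta \to A$; it is a simplicial map, it is the identity on $A$, it is distance non-increasing on the chamber graph, and its restriction to any apartment $A'$ of $\Delta$ containing $C$ is a chamber-complex isomorphism $A' \to A$. These are classical facts, which we record in the appendix (see Proposition~\ref{prop:LV_building} and the surrounding discussion). So first I would invoke Proposition~\ref{prop:oc-emb} together with the Main Theorem to regard $\ocncw$ as a chamber subcomplex of the spherical building $\Delta = |\L(V)|$, and to regard each apartment $A \in \A(W)$ (in the sense of Definition~\ref{def:NCPapartments}) as an apartment of $\Delta$, which is legitimate by the remark following that definition.

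Next, given $C \in \chambers(W)$ and $A \in \A(W)$ with $C \subseteq A$, I would set $\rho_{A,C} \coloneqq \rho_{\Delta, A, C}\big|_{\ocncw}$. The target is contained in $A$, so this is a simplicial map $\ocncw \to A$. It is distance non-increasing because $\rho_{\Delta, A, C}$ is distance non-increasing on $\Gamma_\Delta$ and, by Corollary~\ref{cor:ncw_union_aptm}, $\ocncw$ is a union of apartments of $\Delta$, hence any two chambers of $\ocncw$ are joined by a gallery inside $\ocncw$; the image gallery has length at most that of the original, and since $\ocncw$ is a chamber complex (Lemma~\ref{lem:cc}), the chamber distance in $\ocncw$ dominates the chamber distance in $\Delta$, so the inequality transfers. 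Finally, for an apartment $A' \in \A(W)$ with $C \subseteq A'$, the restriction $\rho_{\Delta, A, C}\big|_{A'}$ is already an isomorphism $A' \to A$ by the building axiom, and it factors through $\rho_{A,C}$ since $A' \subseteq \ocncw$; hence $\rho_{A,C}\big|_{A'}$ is an isomorphism onto $A$.

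The one point that deserves care — and the only place where something could go wrong — is the interaction between "apartments of $\ocncw$" and "apartments of $\Delta$": I must make sure that an $A \in \A(W)$ really is an apartment of the building $\Delta$, not merely a subcomplex isomorphic to a Coxeter complex that sits inside $\Delta$. This is exactly what the remark after Definition~\ref{def:NCPapartments} asserts, so I would cite it explicitly; the underlying reason is that a maximal Boolean sublattice $B \in \B(\ncw)$ maps under $f$ to the lattice of coordinate subspaces of $V$ with respect to a basis, which is precisely an apartment of $\L(V)$. Granting this, the argument is a short and clean reduction to standard building theory, and there is no genuine obstacle; the bookkeeping above is routine once the embedding from the Main Theorem is in hand.
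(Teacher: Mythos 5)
Your proposal is correct and follows essentially the same route as the paper: both restrict the standard building retraction $\rho_{A,C}\colon\Delta\to A$ to the image of $\ocncw$ and deduce the distance bound from the chain $d_A(\rho_{A,C}(X),\rho_{A,C}(Y))\leq d_\Delta(X,Y)\leq d_{\ocncw}(X,Y)$. The extra care you take in checking that elements of $\A(W)$ are genuine apartments of $\Delta$ is exactly what the remark following Definition~\ref{def:NCPapartments} provides, so nothing further is needed.
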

	
	\begin{proof}
		It is a well known fact that there is a retraction 
		$\rho_{A,C}\colon\Delta \to A$ of the building for any pair $C$ and $A$ that has the desired properties \cite[Sec. 4.4]{ab}. Its restriction to the image of $\ocncw$ in $\Delta$ is still distance diminishing as 
		\[d_A(\rho_{A,C}(X), \rho_{A,C}(Y))\leq d_\Delta(X,Y)\leq d_{\ocncw}(X,Y)\]
		for all chambers $X$ and $Y$. Here $d_A$, $d_\Delta$, $d_{\ocncw}$ are the length metrics on the chambers of $A$, $\Delta$ and $\ocncw$, respectively, i.e. the lengths of minimal galleries between chambers. Hence the claim. 
	\end{proof}
	
	Note that the proposition immediately implies that for every retraction $\rho_{A,C}$ and every partial order on the chambers of $A$ one obtains a partial order on the chambers of $\ocncw$ by defining $D\leq  E \Leftrightarrow \rho_{A,C}(D)\leq \rho_{A,C}(E)$.

	\subsection{Crystallographic Coxeter groups}\label{cha:cry}
	
	Humphreys characterizes the finite crystallographic Coxeter groups in Proposition 6.6. of~\cite{hum} as being the 
	the finite Coxeter groups $W$ with Coxeter system $(W,S)$ where the edge labels in the Coxeter diagram are in $ \{2,3,4,6\}$ for all $s, t \in S$. 
	In this case, there exists a root system $\Phi$ associated to $W$ such that every positive root can be written as a non-negative integer linear combination of simple roots.
	The set $B$ of simple roots forms a basis for $V$ and one can show that then every transformation matrix $\rho(w)$, $w \in W$ is integral with respect to $B$. 
	See \cite[Sec. 2.9]{hum} or \cite[Sec. 2.2.1]{armstr} for details.

	From now on suppose that $W$ is finite crystallographic and choose a basis $B$ of $V$ as above. 
	Since the finite image $\im(W)$ is contained in $\gl(\Z)$, we find a prime $p \in \Z$ such that the representation $\rho_p\colon W \to \gl_n(\F_p)$ induced by $\Z \to \Z/p\Z$ is faithful. In particular, we get an injective map 
	\[
	\{\alpha_t: t \in T\}\to \F_p^n, \quad \alpha_t \mapsto \beta_t.
	\]
	
	Let $w\in W$ and $t_1\ldots t_k$ be a reduced decomposition for it.
	Recall that a basis for $\mov(w) \subseteq V$ is given by $\{\alpha_{t_1}, \ldots, \alpha_{t_k}\}$, see Observation \ref{obs:basis}.  
	Let $V_p=\bigoplus_{s \in S} \beta_s \F_p$. In analogy to the real case, we define the \emph{moved space} of $w \in W$ in $V_p$ to be the $V_p$-span $\mov_p(w)=\langle\beta_{t_1}, \ldots, \beta_{t_k}\rangle_{V_p}$. 
	
	\begin{prop}\label{prop:crys}
		For every finite crystallographic Coxeter group $W$ of rank $n$, the complex $\ocncw$ embeds into a \emph{finite} spherical building of type $A_{n-1}$ over $\F_p$.
	\end{prop}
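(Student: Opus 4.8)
The plan is to repeat the proof of Proposition~\ref{prop:oc-emb} with the real representation $\rho$ replaced by $\rho_p$ and the subspace lattice $\L(V)$ replaced by $\L(V_p)$. Concretely, I would show that for all but finitely many primes $p$ the assignment
\[
f_p\colon \ncw \to \L(V_p), \qquad \pi \mapsto \mov_p(\pi),
\]
is a well-defined, order-preserving, rank-preserving injection. Granting this, the induced map $\bar f_p\colon \ocncw \to |\L(V_p)|$ is an embedding of simplicial complexes exactly as in Proposition~\ref{prop:oc-emb}, and $|\L(V_p)|$ is a \emph{finite} spherical building of type $A_{n-1}$ by Proposition~\ref{prop:LV_building}, since $V_p$ is an $n$-dimensional vector space over the finite field $\F_p$ and hence $\L(V_p)$ has finitely many elements. (For odd $p$ the faithfulness of $\rho_p$ is in fact automatic by Minkowski's lemma applied to the finite subgroup $\rho(W)\subseteq\gl_n(\Z)$, so it contributes no real restriction.)

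Well-definedness of $\mov_p(\pi)$ comes first: if $t_1\ldots t_k$ and $t_1'\ldots t_k'$ are two $T$-reduced expressions of $\pi$, then by transitivity of the Hurwitz action \cite{bes} they are joined by a chain of shifts $\sigma_i^{\pm 1}$, and a single shift replaces the (unordered) family of roots $\{\alpha_{t_1},\ldots,\alpha_{t_k}\}$ by its image under an integral change of basis that is invertible over $\F_p$ --- explicitly $\alpha_{t_i t_{i+1}t_i}=\rho(t_i)\alpha_{t_{i+1}}=\alpha_{t_{i+1}}-\langle\alpha_{t_{i+1}},\alpha_{t_i}^\vee\rangle\,\alpha_{t_i}$ with integer coefficient --- so the $\F_p$-span $\langle\beta_{t_1},\ldots,\beta_{t_k}\rangle_{V_p}$ does not change. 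Order-preservation is then immediate: if $\pi\leq\pi'$ in $\ncw$, then $\ell(\pi')=\ell(\pi)+\ell(\pi^{-1}\pi')$, so any $T$-reduced expression of $\pi$ extends to one of $\pi'$, giving $\mov_p(\pi)\subseteq\mov_p(\pi')$.

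The two substantive points, rank-preservation and injectivity, are both settled by discarding finitely many primes. For rank-preservation I would invoke Carter's Lemma~\ref{lem:carter}: $t_1\ldots t_k$ is $T$-reduced if and only if $\alpha_{t_1},\ldots,\alpha_{t_k}$ are linearly independent over $\R$. Since $T$ is finite, only finitely many subsets of $\{\alpha_t:t\in T\}$ occur; for each linearly independent such subset fix one nonzero maximal minor of the integer matrix of its members, and discard the finitely many primes dividing one of these minors. For the surviving $p$, real linear independence of $\{\alpha_{t_i}\}$ forces $\F_p$-linear independence of $\{\beta_{t_i}\}$, hence $\dim_{\F_p}\mov_p(\pi)=\ell(\pi)=\dim_\R\mov(\pi)$ for every $\pi$; in particular $\mov_p(c)=V_p$ and $\rk(\ncw)=\rk\L(V_p)$, so $\bar f_p$ preserves dimension. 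For injectivity I would use the injectivity of the real map $f$ of Proposition~\ref{prop:emb}: for distinct $\pi,\pi'\in\ncw$ the rational subspaces $\mov(\pi)$ and $\mov(\pi')$ differ, so among the three ranks $\rk M_\pi$, $\rk M_{\pi'}$, $\rk\binom{M_\pi}{M_{\pi'}}$ --- where $M_\pi$ denotes the integer matrix whose rows are the roots of a fixed $T$-reduced expression of $\pi$ --- not all are equal; discarding the finitely many primes dividing one witnessing minor for each of these three ranks keeps all three ranks unchanged mod $p$, whence the row spaces $\mov_p(\pi)=\mathrm{rowspace}_{\F_p}(M_\pi\bmod p)$ and $\mov_p(\pi')=\mathrm{rowspace}_{\F_p}(M_{\pi'}\bmod p)$ remain distinct. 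As $\ncw$ is finite, only finitely many pairs arise.

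Choosing $p$ odd and outside this finite exceptional set then yields the desired embedding $\bar f_p\colon\ocncw\hookrightarrow|\L(V_p)|$ into a finite spherical building of type $A_{n-1}$; moreover, since $\bar f_p$ is dimension-preserving and $\ocncw$ is a chamber complex by Lemma~\ref{lem:cc}, the image is a chamber subcomplex, as in the Main Theorem. I expect the only real work to be the bookkeeping in the previous paragraph: checking that every genericity hypothesis one needs (faithfulness of $\rho_p$, Carter's Lemma mod $p$, row spaces of the $M_\pi$ still pairwise distinct mod $p$) is encoded by the non-vanishing mod $p$ of one of finitely many explicitly specified nonzero integers, so that ``for all but finitely many $p$'' is legitimate; the geometric content has already been carried out over $\R$ in Sections~\ref{sec:ncp} and~\ref{sec:embedding}.
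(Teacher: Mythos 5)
Your argument is correct, and it proves the proposition (as an existence statement about some finite building), but it organizes the reduction mod $p$ differently from the paper. The paper does not re-verify well-definedness, rank-preservation and injectivity over $\F_p$ from scratch: it fixes a prime $p$ for which the reduced representation $\rho_p$ is faithful, observes that crystallographicity lets one view the real map $f(\pi)=\mov(\pi)$ of Proposition~\ref{prop:emb} as an injective, rank-preserving poset map $f'\colon\ncw\to\L(V_\Z)$ with $V_\Z=\bigoplus_{s\in S}\alpha_s\Z$, and then composes with the reduction $\L(V_\Z)\to\L(V_p)$, asserting that this composite is still injective. Your proposal instead builds $f_p(\pi)=\mov_p(\pi)$ directly and pays for every property explicitly: well-definedness via Hurwitz transitivity and the integrality of the Cartan numbers, rank-preservation via Carter's Lemma~\ref{lem:carter} plus non-vanishing of maximal minors mod $p$, and injectivity via the ranks of the stacked matrices $\binom{M_\pi}{M_{\pi'}}$ --- all at the cost of discarding finitely many primes. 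What your version buys is rigor: the paper's step ``$\L(V_\Z)\to\L(V_p)$ is injective by assumption'' really does need an argument of the kind you give, since faithfulness of $\rho_p$ alone does not prevent two distinct integral subspaces from collapsing mod $p$ or a set of independent roots from becoming dependent. What the paper's version buys (at least in intent) is a concrete, checkable condition on a \emph{specific} small prime, which is how it later justifies $p=2$ for type $A_n$ and $p=3$ for $B_n$; your genericity argument yields only ``all but finitely many $p$'' and would not by itself certify those particular primes, so it proves the Main Theorem's finiteness claim but not the sharper numerical remarks following the proposition.
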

	
	\begin{proof}
		Let $f\colon \ncw \to \L(V)$ with $f(\pi)=\mov(\pi)$ be the map from Proposition \ref{prop:emb}. Since $W$ is crystallographic, every $f(\pi)$ has a basis consisting of integer roots. For $V_\Z=\bigoplus_{s\in S}\alpha_s\Z$, we hence get an injective, rank preserving poset map $f'\colon \ncw \to \L(V_\Z)$. Composition with the map $\L(V_\Z) \to \L(V_p)$, induced by $\alpha_s \mapsto \beta_s$, which is injective by assumption, gives the desired embedding of posets $\ncw \to \L(V_p)$. The induced map on order complexes provides the embedding of $|\ncw|$ into the finite building $|\L(V_p)|$.
	\end{proof}
	
	When $W$ is fixed one can explicitly determine a minimal prime $p$ such that $\ocncw$ embeds into $|\L(\F_p^n)|$. 
	For type $A_n$, we get that $p=2$ and for $B_n$ one chooses $p=3$. Compare Section \ref{sec:AB} for details.

	\section{Types $A$ and $B$}\label{sec:AB}
	
	In this section, we construct explicit embeddings of the non-crossing partitions of type $A$ and $B$ in a spherical building over $\F_2$ and $\F_3$, respectively.
	We will interpret the embeddings using the pictorial presentations of $\nc(A_n)$ and $\nc(B_n)$.
	In type $A$, our construction is a special case of the embeddings in \cite{hks}. 
	
	\subsection{Type $A$}\label{Sec:typeA}
	
	The Coxeter group of type $A_{n-1}$ is the symmetric group $S_n$. 
	The set of reflections is 
	\[
	T(S_n)=\{(i, j) \in S_n:\ 1 \leq i < j \leq n\}, 
	\]
	the transpositions of $S_n$.
	From now on, we will fix the Coxeter element $c=(1,2\ldots, n)$ of $S_n$.
	
	We start with explicitly constructing the embedding from Proposition \ref{prop:crys} of $|\nc(S_n)|$ into a finite spherical building. Recall that a root system of type $A_{n-1}$ in $\R^n$ is given by
	\[
	\{\varepsilon_i - \varepsilon_j\ |\ 1 \leq i, j \leq n, i \neq j\},
	\]
	where $\varepsilon_i\in\R^{n}$ is the $i$th standard basis vector. In particular, the roots are contained in the subspace of dimension $n-1$, where all coordinates sum up to zero. Note that the root $\varepsilon_i - \varepsilon_j$ corresponds to the transposition $(i,j) \in T(S_n)$ under the standard geometric representation.

	Let $e_1, \ldots, e_{n-1}$ be the standard basis vectors of $\F_2^{n-1}$.
	
	\begin{prop}\label{prop:An_emb}
		The map $f'\colon T(S_{n}) \to \L(\F_2^{n-1})$ given by 
		\[(i, j) \mapsto 
		\begin{cases}
		\langle e_i + e_{j} \rangle &\text{if } j < n\\
		\langle e_i\rangle &\text{if } j=n
		\end{cases}
		\]
		is injective and induces an embedding $ f\colon |\nc(S_n)| \to |\L(\F_2^{n-1})|$ of the complex of non-crossing partitions of type $A_{n-1}$ into a finite building of type $A_{n-2}$ defined over $\F_2$.
	\end{prop}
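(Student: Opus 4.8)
The plan is to realize $f'$ as the mod-$2$ moved-space embedding of Proposition~\ref{prop:crys}, written out in a well-chosen basis; then injectivity and rank-preservation come for free and only a short basis computation remains. To begin, injectivity of $f'$ itself is elementary: over $\F_2$ a line is determined by its unique nonzero vector, and the vectors $e_i+e_j$ for $1\le i<j<n$ together with $e_i$ for $1\le i<n$ are pairwise distinct (the $e_i$ are a basis, $e_i+e_j$ has Hamming weight $2$ while $e_i$ has weight $1$, and $\{i,j\}$ is recovered from $e_i+e_j$).

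Next I would invoke Proposition~\ref{prop:crys} for $W=S_n$, which is crystallographic of rank $n-1$ with simple system $S=\{s_1,\dots,s_{n-1}\}$ and standard geometric representation realized integrally so that $s_i$ corresponds to the root $\varepsilon_i-\varepsilon_{i+1}$. Taking $p=2$, the ambient space of that embedding is $V_2=\bigoplus_{i=1}^{n-1}\beta_{s_i}\F_2$, where $\beta_{s_i}$ is the reduction of $\varepsilon_i-\varepsilon_{i+1}$; since $\overline{\varepsilon_i-\varepsilon_{i+1}}=\varepsilon_i+\varepsilon_{i+1}$ and these $n-1$ vectors span the hyperplane $\bar H:=\{x\in\F_2^n:\sum_i x_i=0\}$, we have $V_2=\bar H$. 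Under this embedding a reflection $(i,j)\in T(S_n)$, whose root is $\varepsilon_i-\varepsilon_j$, is sent to the line $\langle\varepsilon_i+\varepsilon_j\rangle\subseteq\bar H$, and a general $\pi$ with $T$-reduced decomposition $t_1\cdots t_k$ is sent to $\mov_2(\pi)=\langle\beta_{t_1},\dots,\beta_{t_k}\rangle$; by Proposition~\ref{prop:crys} this is an injective, rank-preserving poset map $\nc(S_n)\to\L(\bar H)$.

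It remains to transport this to the coordinates of the statement. The $n-1$ vectors $v_i:=\varepsilon_i+\varepsilon_n$, $i=1,\dots,n-1$, are linearly independent and lie in $\bar H$, hence form a basis of the $(n-1)$-dimensional space $\bar H$; let $g\colon\bar H\to\F_2^{n-1}$ be the linear isomorphism with $g(v_i)=e_i$. Since $\varepsilon_i+\varepsilon_j=v_i+v_j$ for $i<j<n$ and $\varepsilon_i+\varepsilon_n=v_i$, the map $g$ sends $\beta_{(i,j)}$ to $e_i+e_j$ when $j<n$ and $\beta_{(i,n)}$ to $e_i$; that is, $g$ carries the reflection part of the Proposition~\ref{prop:crys} map exactly onto $f'$. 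The induced lattice isomorphism $\bar g\colon\L(\bar H)\to\L(\F_2^{n-1})$ therefore turns that embedding into an injective, rank-preserving poset map $f\colon\nc(S_n)\to\L(\F_2^{n-1})$ which restricts to $f'$ on reflections and sends $\pi=t_1\cdots t_k$ to $\langle f'(t_1),\dots,f'(t_k)\rangle$. As in the proof of Proposition~\ref{prop:oc-emb}, an injective rank-preserving poset map induces an embedding of order complexes, so $f$ induces an embedding $\vert\nc(S_n)\vert\hookrightarrow\vert\L(\F_2^{n-1})\vert$, and $\vert\L(\F_2^{n-1})\vert$ is a finite spherical building of type $A_{n-2}$ by Proposition~\ref{prop:LV_building}.

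The only step carrying real content is the use of Proposition~\ref{prop:crys}: that the mod-$2$ reduction $\pi\mapsto\mov_2(\pi)$ is well defined and rank preserving, i.e.\ that an integral root basis of $\mov(\pi)$ stays $\F_2$-linearly independent for the $\pi$ appearing in $\nc(S_n)$, equivalently that $\rho_2$ is faithful on $S_n$ (which holds for $n\ge 3$, since $S_n$ already acts faithfully on $\bar H$ by permutation matrices; the cases $n\le 2$ are degenerate). Granting that, the argument above is pure linear algebra, and the $\F_2^{n-1}$ of the statement is simply $\bar H$ expressed in the basis $(v_1,\dots,v_{n-1})$.
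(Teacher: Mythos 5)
Your proof is correct, and it is best read as a fully worked-out version of the argument the paper only sketches. The paper's own proof is two sentences: it declares injectivity of $f'$ clear and then \emph{defines} the extension $f$ on all of $\nc(S_n)$ by $f(w)=f'(t_1)\vee\ldots\vee f'(t_k)$ for a reduced decomposition $t_1\cdots t_k$, citing Lemma~\ref{lem:join_assoc}. As written, that leaves implicit why this extension is well defined (independent of the chosen reduced decomposition), injective, and rank preserving --- facts which do not follow from Lemma~\ref{lem:join_assoc} alone, since that lemma concerns the real moved-space map, not an arbitrary assignment of lines over $\F_2$. What you supply is exactly the missing identification: the change of basis $v_i=\varepsilon_i+\varepsilon_n$ of the hyperplane $\bar H\subseteq\F_2^n$ exhibits $f'$ as the mod-$2$ moved-space map of Proposition~\ref{prop:crys} in new coordinates, after which well-definedness, injectivity and rank preservation are inherited wholesale, and the passage to order complexes is the same as in Proposition~\ref{prop:oc-emb}. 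This is clearly the intended mechanism (the paper itself points from Section~\ref{cha:cry} to Section~\ref{sec:AB} with ``for type $A_n$ we get $p=2$''), so your route buys rigor at the cost of a short linear-algebra computation, whereas the paper's version trades that for brevity by treating the join-extension as self-evidently consistent. Your side remarks (faithfulness of the permutation action on $\bar H$ for $n\geq 3$, degeneracy of $n\leq 2$) are accurate.
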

	
	\begin{proof}
		The injectivity of the map $f'$ is clear. Since we can extend $f'$ to $\nc(S_n)$ by Lemma \ref{lem:join_assoc} via $f(t \vee t') \coloneqq f'(t) \vee f'(t')$ for a reduced expression $tt'$, we get the desired embedding $f$.
	\end{proof}

	The non-crossing partitions of type $A_{n-1}$ are isomorphic to the classical non-crossing partitions $\ncp_n$ of a cycle, introduced by Kreweras in \cite{kre}. In \cite{bra_kpi} it was shown that $\nc(A_{n-1}) \cong \ncp_n$ as lattices. We will recall the correspondence and give interpretations of the apartments and chambers in $\nc(A_{n-1})=\nc(S_n)$ in the pictorial representation.
	
	Let $\pi= \{B_1, \ldots, B_k\}$ be a partition of $\{1, \ldots, n\}$.
	The pictorial representation $P(\pi)$ of $\pi$ is obtained as follows: label the vertices of a regular $n$-gon in the plane with $1, \ldots, n$ clockwise in this order and draw the convex hull of the elements of $B$ for every $B\in \pi$. The partition is called \emph{non-crossing}, if there are no crossing blocks in $P(\pi)$.  
	When no confusion arises, we will also write $\pi$ for $P(\pi)$. The set of non-crossing partitions of $\{1, \ldots, n\}$ is denoted by $\ncp_n$. It is a graded lattice \cite{kre} with partial order given by refinement. The rank of $\pi \in \ncp_n$ is $\rk(\pi)=n-k$, where is the number of blocks of $\pi$. Join and meet are given by the non-crossing span and intersection, respectively.
	
	An element $w \in \nc(S_n)$ has a unique (up to order) cycle decomposition $w=z_1\ldots z_k$ into disjoint cycles. For a cycle $z=(i_1, \ldots, i_k)$, let $\{z\}$ denote the set $\{i_1, \ldots, i_k\} \subseteq \{1, \ldots, n\}$. For every $w \in \nc(S_n)$ with cycle decomposition $w=z_1\ldots z_k$, the set $\{\{z_1\}, \ldots, \{z_k\}\}$ induces a non-crossing partition $\{w\}$ of $\{1, \ldots, n\}$. Brady proved the following \cite[Le. 3.2]{bra_kpi}.

	\begin{lemma}\label{lem:lattice_iso}
		For all $n \in \N$, the map
		\[
		\nc(S_n) \to \ncp_n, \quad w \mapsto P(\{w\})
		\]
		is a lattice isomorphism. 
	\end{lemma}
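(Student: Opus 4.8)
The plan is to exhibit an explicit two-sided inverse and to reduce the statement to a single elementary computation with cycles. For a subset $B=\{j_1<\dots<j_k\}$ of $\{1,\dots,n\}$ write $z_B=(j_1,j_2,\dots,j_k)$ for the increasing cycle on $B$, and call a cycle of a permutation \emph{$c$-oriented} if it has this form. Define $\Psi\colon\ncp_n\to S_n$ by sending $\pi=\{B_1,\dots,B_m\}$ to $z_{B_1}z_{B_2}\cdots z_{B_m}$ (the factors have disjoint support, so the order is irrelevant). Then $\{\Psi(\pi)\}=\pi$ always, and $\Psi$ recovers $w$ from $\{w\}$ precisely when every cycle of $w$ is $c$-oriented. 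The map of the lemma is rank-preserving: recalling $\ell(w)=n-(\text{number of cycles of }w)$ for $w\in S_n$, we get $\rk(w)=\ell(w)=n-(\text{number of blocks of }\{w\})=\rk(P(\{w\}))$ by the rank formula on $\ncp_n$. So it suffices to prove: (i) for every $w\le c$, all cycles of $w$ are $c$-oriented and $\{w\}$ is non-crossing; (ii) $\Psi(\pi)\le c$ for every non-crossing $\pi$; and (iii) the resulting bijection and its inverse carry cover relations to cover relations.

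The engine for (i)--(iii) is the identity
\[
z_B\cdot(j_p,j_q)\;=\;z_{\{j_1,\dots,j_p\}\cup\{j_{q+1},\dots,j_k\}}\cdot z_{\{j_{p+1},\dots,j_q\}}\qquad(1\le p<q\le k),
\]
the worked-out general case of $(1\,2\,\cdots\,n)(a,b)$: multiplying a $c$-oriented cycle by a transposition of two of its letters splits it into two $c$-oriented cycles, one supported on a cyclic interval of $B$ and one on its complement in $B$, and these two new blocks cross neither each other nor any block disjoint from $B$. For (i) I argue by descending induction on $\ell(w)$ in the graded lattice $\nc(S_n)$, with base case $w=c=(1\,2\,\cdots\,n)$: if $w\lessdot w'\le c$ then $w=w's$ for a reflection $s$ which, since the number of cycles increases, transposes two letters of a single cycle of $w'$; that cycle is $c$-oriented by the inductive hypothesis, so the displayed identity (applied inside the product defining $w'$) transfers both properties from $\{w'\}$ to $\{w\}$. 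For (ii) I induct on the number of blocks of $\pi$: the one-block partition maps to $c$; and a non-crossing $\pi$ with $\ge2$ blocks has two blocks $B,B'$ with $B$ inside a single gap of $B'$, whose merge is a non-crossing $\pi^+$ with one fewer block, and reading the identity backwards gives $\Psi(\pi^+)=\Psi(\pi)\cdot t$ for a reflection $t$ with $\ell(\Psi(\pi^+))=\ell(\Psi(\pi))+1$, hence $\Psi(\pi)\le\Psi(\pi^+)\le c$. Thus the map of the lemma is a bijection $\nc(S_n)\to\ncp_n$ with inverse $\Psi$; its injectivity can alternatively be deduced from Proposition~\ref{prop:emb}, since $\mov(w)$ is the direct sum of the zero-sum subspaces on the supports of the cycles of $w$ and hence depends only on $\{w\}$.

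Finally, for (iii): a cover $v\lessdot w$ in $\nc(S_n)$ has $w=vt$ with $t$ a reflection merging two cycles of $v$, so $\{w\}$ arises from $\{v\}$ by merging two blocks and $P(\{v\})\lessdot P(\{w\})$ in $\ncp_n$; conversely a cover $\pi\lessdot\pi'$ merges two blocks, one inside a gap of the other, and the displayed identity (read backwards, as in (ii)) yields $\Psi(\pi')=\Psi(\pi)\cdot t$ with $\ell$ jumping by one, so $\Psi(\pi)\lessdot\Psi(\pi')$. Since both $\nc(S_n)$ and $\ncp_n$ are graded and the map is a rank-preserving bijection carrying covers to covers in both directions, it is an isomorphism of posets, and, both being lattices, an isomorphism of lattices. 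The only non-formal ingredient — and where I expect essentially all the work to lie — is the displayed splitting identity together with the bookkeeping that its two output blocks remain non-crossing with the rest of the partition; granting that, everything above is routine.
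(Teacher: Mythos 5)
The paper does not prove this lemma at all: it is quoted as \cite[Le.~3.2]{bra\_kpi} (Brady's result), so there is no in-paper argument to compare against. Your proof is correct and is essentially the standard argument behind Brady's lemma: the whole content is the cycle-splitting identity
$z_B\,(j_p,j_q)=z_{\{j_1,\dots,j_p\}\cup\{j_{q+1},\dots,j_k\}}\,z_{\{j_{p+1},\dots,j_q\}}$,
used downward from $c$ to show that every $w\le c$ is a product of increasing cycles on the blocks of a non-crossing partition, and upward from singletons to show that every non-crossing partition yields an element below $c$; the cover-preservation plus gradedness argument at the end is sound (a rank-preserving bijection between graded posets carrying covers to covers in both directions is a poset isomorphism, hence a lattice isomorphism). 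Two small points are genuinely where the deferred ``bookkeeping'' lives and deserve to be spelled out if this were written up in full: (a) in step (ii) and the converse of (iii), you need that for a cover $\pi\lessdot\pi'$ in $\ncp_n$ the two merged blocks automatically have one sitting in a single gap of the other --- this is exactly the non-crossing condition for a pair of disjoint blocks, so it is automatic, and it is what guarantees the identity applies with one of the two blocks as the linear interval $\{j_{p+1},\dots,j_q\}$ of the sorted union; (b) in step (ii) you need the \emph{existence} of a mergeable pair whose union does not cross the remaining blocks (take a block that is a contiguous cyclic interval and the block of the element cyclically preceding it). Both are routine, so the proof stands; it is simply a proof the paper outsources to the literature.
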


	We write $P(w)$ for $P(\{w\})$ and interpret it as embedded graph, hence we may speak of edges of $P(w)$. Each connected component of $P(w)$ (i.e. the convex hulls of blocks) then corresponds to the complete graph on the same vertices (but we only draw the outer edges).

	\begin{remark}
		Combining Proposition \ref{prop:An_emb} and Lemma \ref{lem:lattice_iso} gives an embedding of $|\ncp_n|$ into the building $\L(\F_2^{n-1})$, where the edge $(i,j)$ is mapped to the subspace $\langle e_i + e_j \rangle$ when $j \neq n$ and to the subspace $\langle e_i \rangle$ otherwise. Figure \ref{fig:ncp4} on page \pageref{fig:ncp4} shows the order complex of $\ncp_4$ and how it sits inside a spherical building. The labeling is chosen in such a way that it fits with the labeling of the building $\L(\F_2^3)$ in Figure \ref{fig:F2-building} on page \pageref{fig:F2-building}.
	\end{remark}
	
	The apartments in $|\ncp_n|$ have a graphical description. The following is Proposition $4.4$ of \cite{hks}.
	
	\begin{prop}
		The apartments of $|\ncp_n|$ are in bijection with non-crossing spanning trees on $n$ vertices on a cycle.
	\end{prop}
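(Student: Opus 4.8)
The plan is to establish a bijection between the apartments of $|\ncp_n|$ and the non-crossing spanning trees on the $n$ labeled vertices of a cycle, by combining the already established correspondence between apartments and maximal Boolean lattices (the bijection $\tilde h$ of Definition~\ref{def:NCPapartments}) with the lattice isomorphism $\nc(S_n)\cong\ncp_n$ of Lemma~\ref{lem:lattice_iso}. The key observation is that a maximal Boolean lattice $B\in\B(\nc(S_n))$ is generated by its $n{-}1$ atoms, which are reflections, i.e.\ transpositions; under the pictorial isomorphism each transposition $(i,j)$ corresponds to the edge $\{i,j\}$ of the $n$-gon. So an apartment determines a set $E$ of $n{-}1$ edges on the vertex set $\{1,\dots,n\}$. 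I would first show this edge set is a \emph{non-crossing spanning tree}: it is non-crossing because the atoms lie in $\ncp_n$ and their pairwise joins must again be non-crossing partitions (the Boolean lattice sits inside $\ncp_n$), which forbids any two of the edges from crossing; it is a tree on all $n$ vertices because the join of all $n{-}1$ atoms has rank $n{-}1$ in $\nc(S_n)$, hence corresponds to a single block, i.e.\ a connected spanning subgraph with exactly $n{-}1$ edges, which is precisely a spanning tree.

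Conversely, given a non-crossing spanning tree $\Gamma$ with edge set $\{\{i_1,j_1\},\dots,\{i_{n-1},j_{n-1}\}\}$, I would set $t_k=(i_k,j_k)$ and show that $\{\bigvee_{k\in I}t_k: I\subseteq\{1,\dots,n-1\}\}$ is a maximal Boolean lattice in $\nc(S_n)$. The crucial points are: (i) the corresponding roots $\varepsilon_{i_k}-\varepsilon_{j_k}$ are linearly independent (because $\Gamma$ is a forest, its edge-vectors are independent), so by Carter's Lemma~\ref{lem:carter} the product $t_1\cdots t_{n-1}$ is a reduced word of length $n{-}1=\rk(\nc(S_n))$, hence equals a Coxeter element; (ii) every subword is then also reduced, so by the subword property~\ref{prop:subword} and Lemma~\ref{lem:product_join} each $\bigvee_{k\in I}t_k=\prod_{k\in I}t_k$ lies in $\nc(S_n)$, and these $2^{n-1}$ elements are distinct because their moved spaces (spans of the corresponding root subsets) are distinct by the independence in (i) together with the injectivity of $f$ from Proposition~\ref{prop:emb}. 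This gives a Boolean sublattice of full rank $n{-}1$, hence a maximal one, i.e.\ an apartment via $\tilde h$. That the two assignments are mutually inverse is then immediate, since each recovers the same underlying edge set / set of atoms.

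It remains only to check that the $\nc(S_n)$-apartment one obtains this way does not depend on an ordering of the edges of $\Gamma$ — but this is clear, since the Boolean lattice $h(t_1\dots t_n)$ from Lemma~\ref{lem:aptms} depends only on the \emph{set} $\{t_1,\dots,t_n\}$ of atoms, as already remarked after that lemma. One subtlety to address is that the map $h'$ of Lemma~\ref{lem:red_dec_aptm} need only be surjective onto $\A(W)$ in type $A$; here we may invoke the cited Lemma~2.5 of \cite{gy}, or argue directly as above that \emph{every} maximal Boolean lattice arises from a reduced decomposition of some Coxeter element, which is exactly the content of (i)–(ii) read in the forward direction.

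**The main obstacle.** The delicate step is the non-crossing claim in both directions: one must verify that the edge set of an apartment is non-crossing, and conversely that the Boolean lattice built from a non-crossing tree actually lands in $\ncp_n$ rather than merely in the partition lattice. For the forward direction this follows because joins in $\ncp_n$ are computed by non-crossing span, so any single atom being non-crossing forces pairwise non-crossing of the generating edges (two crossing chords already generate a partition with a crossing block, contradicting membership in $\ncp_n$). For the converse, linear independence of the edge-vectors of a forest is what makes all the products reduced, and Lemma~\ref{lem:lattice_iso} then transports the membership $\prod_{k\in I}t_k\in\nc(S_n)$ to $\ncp_n$ automatically. So the geometry (Carter's Lemma) and the combinatorics (spanning trees $\leftrightarrow$ maximal independent edge sets) fit together, and the proof is essentially the translation of "full-rank Boolean lattice" into "non-crossing spanning tree" through the dictionary reflections~$\leftrightarrow$~chords, roots~$\leftrightarrow$~edge-vectors.
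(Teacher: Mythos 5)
Your overall strategy (apartments $\leftrightarrow$ maximal Boolean sublattices $\leftrightarrow$ sets of $n-1$ transpositions $\leftrightarrow$ edge sets) is the natural one; note that the paper itself gives no proof here but simply cites Proposition 4.4 of \cite{hks}, so the comparison is with the standard argument rather than with an in-paper proof. Your forward direction is essentially fine once you invoke Lemma 2.5 of \cite{gy} for the surjectivity of $h$, and the rank computation $\rk(t\vee t')=3$ for crossing chords is the right obstruction.

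The converse direction, however, has a genuine gap at step (i). Linear independence of the edge vectors of a spanning tree gives, via Carter's Lemma, that $t_1\cdots t_{n-1}$ is $T$-reduced of length $n-1$, hence is an $n$-cycle and therefore \emph{a} Coxeter element. But $\nc(S_n)$ is the interval $[\id,c]$ below the \emph{fixed} Coxeter element $c=(1,2,\ldots,n)$, and the subword property only places the partial products $\prod_{k\in I}t_k$ inside $\nc(S_n)$ if the full product (in a suitable order) equals that particular $c$. Your argument never uses the non-crossing hypothesis in this direction and would apply verbatim to a \emph{crossing} spanning tree, yielding a false conclusion: for instance the crossing tree with edges $\{1,3\},\{2,4\},\{1,2\}$ in $S_4$ has linearly independent roots and every ordering of its transpositions multiplies to some $4$-cycle, yet none of these orderings gives $(1234)$, and indeed $(13)\vee(24)=(1234)$ has rank $3$ in $\nc(S_4)$, so these transpositions do not generate a Boolean sublattice of $\nc(S_4)$. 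The missing step is precisely the substantive combinatorial fact — Theorem 2.2 of \cite{gy}, or an induction peeling off a boundary edge of the non-crossing tree — that the edges of a non-crossing spanning tree admit an ordering whose product is $(1,2,\ldots,n)$ itself. Once that is supplied (or cited), the rest of your argument, including the distinctness of the $2^{n-1}$ joins via moved spaces and the independence of the ordering, goes through.
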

	
	This correspondence enables us to enumerate the apartments.
	
	\begin{cor}
		There are
		\[
		\frac{1}{2n-1} \binom{3n-3}{n-1}
		\]
		apartments in $|\ncp_n|$.
	\end{cor}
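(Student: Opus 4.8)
The plan is to combine the preceding proposition with a classical enumeration. By that proposition, the apartments of $|\ncp_n|$ are in bijection with non-crossing spanning trees on $n$ points in convex position, so it suffices to count the latter; call this number $t_n$. I would then prove $t_n=\frac{1}{2n-1}\binom{3n-3}{n-1}$ by a generating-function argument. This is a known fact ($t_n$ is the number of non-crossing trees on $n$ nodes, a Fuss--Catalan number, enumerated by Noy and by Flajolet--Noy), and one could simply cite it; but a short self-contained proof fits well here.

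For the enumeration I would first observe that nothing changes if the $n$ vertices are placed on a line instead of a circle: writing a chord as $\{i,j\}$ with $i<j$ and drawing it as an arc above the line, chords $\{i,j\}$ and $\{k,l\}$ (with $i<j$, $k<l$) cross in the $n$-gon exactly when $i<k<j<l$, which is the crossing rule for arcs over a line. So set $T(z)=\sum_{n\ge1}t_nz^n$, counting non-crossing trees on vertices $1<2<\dots<n$ rooted at the leftmost vertex. The key is a recursive decomposition via \emph{butterflies}: a butterfly is a non-crossing tree on a consecutive block of vertices, rooted at a vertex $w$ (possibly interior to the block), split by $w$ into a left wing and a right wing. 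Deleting $w$ breaks each wing into a sequence of subtrees which, re-rooted at their vertex formerly adjacent to $w$, are again butterflies occupying disjoint consecutive sub-blocks; conversely, any two sequences of butterflies, placed to the left and to the right of a fresh vertex $w$ and joined to it, form a butterfly. This yields $\mathcal B\cong\{w\}\times\mathrm{SEQ}(\mathcal B)\times\mathrm{SEQ}(\mathcal B)$, i.e. $B(z)=z/(1-B(z))^2$, and a tree rooted at the leftmost vertex is that vertex together with a single sequence of butterflies (no left wing), giving $T(z)=z/(1-B(z))$.

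Then I would extract coefficients by Lagrange inversion. With $B=z\phi(B)$, $\phi(u)=(1-u)^{-2}$, and $H(u)=(1-u)^{-1}$, the formula $[z^m]H(B)=\tfrac1m[u^{m-1}]H'(u)\phi(u)^m$ gives
$$t_n=[z^n]T=[z^{n-1}]\frac{1}{1-B}=\frac{1}{n-1}[u^{n-2}](1-u)^{-2n}=\frac{1}{n-1}\binom{3n-3}{n-2}=\frac{1}{2n-1}\binom{3n-3}{n-1}$$
for $n\ge2$ (using the identity $\binom{3n-3}{n-2}=\tfrac{n-1}{2n-1}\binom{3n-3}{n-1}$), while $n=1$ is trivial. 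The main obstacle is the combinatorial bookkeeping in the butterfly decomposition: showing that concatenating butterflies on disjoint consecutive blocks never creates a crossing (the chords involved live on disjoint intervals, and the edges joining subtrees to $w$ are nested) and that every non-crossing tree arises exactly once this way. Once the two functional equations are in hand, the rest is routine. A cleaner but less self-contained alternative is to invoke a known bijection between non-crossing trees on $n$ vertices and ternary trees with $n-1$ internal nodes.
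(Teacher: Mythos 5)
Your first step is exactly the paper's: invoke the preceding proposition to reduce the count of apartments to the count of non-crossing spanning trees on $n$ vertices in convex position. The paper then stops there and simply cites Noy's enumeration (the reference given is \cite[Cor.\ 1.2]{noy}), which is also the fallback you explicitly allow yourself; in that reading your proposal matches the paper. The extra content is your self-contained Flajolet--Noy-style derivation, and there the Lagrange-inversion computation and the algebra (including the identity $\binom{3n-3}{n-2}=\tfrac{n-1}{2n-1}\binom{3n-3}{n-1}$) are correct, and your pair of functional equations is equivalent to the standard $A=1+zA^3$ for shifted ternary numbers.

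There is, however, one genuine flaw in the combinatorial part as written: your definition of a butterfly as ``a non-crossing tree on a consecutive block of vertices, rooted at a vertex $w$ (possibly interior to the block)'' is too broad. With that definition the number of butterflies on $m$ vertices would be $m\,t_m$, which gives $9$ for $m=3$, whereas the class satisfying $B=z/(1-B)^2$ has only $7$ objects of size $3$. The two excess objects are the trees on $\{1,2,3\}$ rooted at $2$ that contain the edge $\{1,3\}$, i.e.\ an edge strictly straddling the root; attaching such a butterfly to an external vertex $w'$ by the edge $\{w',2\}$ creates a crossing with $\{1,3\}$, so these objects must be excluded for the ``conversely'' direction of your decomposition to hold. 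The fix is standard: either define butterflies recursively as the class $\mathcal B\cong\{w\}\times\mathrm{SEQ}(\mathcal B)\times\mathrm{SEQ}(\mathcal B)$, or add to your definition the condition that no edge of the tree strictly straddles the root (equivalently, every edge is either incident to $w$ or lies entirely on one side of $w$). With that correction the decomposition is the classical one and the rest of your argument goes through; alternatively, citing Noy as the paper does disposes of the whole issue in one line.
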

	
	\begin{proof}
		The number of non-crossing spanning trees on $n$ vertices is well-known to be the generalized Catalan number $\frac{1}{2n-1} \binom{3n-3}{n-1}$, see for example \cite[Cor. 1.2]{noy}.
	\end{proof}
	
	\begin{example}\label{ex:tree_chambers}
		By Lemma \ref{lem:red_dec_aptm} every reduced decomposition of the Coxeter element $c=(1,2, \ldots, n)$ gives rise to an apartment in $|\ncp_n|$. The non-crossing tree describing the apartment corresponding to the decomposition $(13)(35)(12)(34)$ of the Coxeter element $c=(12345) \in S_5$ is the non-crossing tree in Figure \ref{fig:chambers}. Note that this is also the apartment corresponding to the decomposition $(13)(12)(35)(34)$. 
	\end{example}
	
	\begin{figure}[h]%
		\begin{center}
			\def\svgwidth{9cm}
			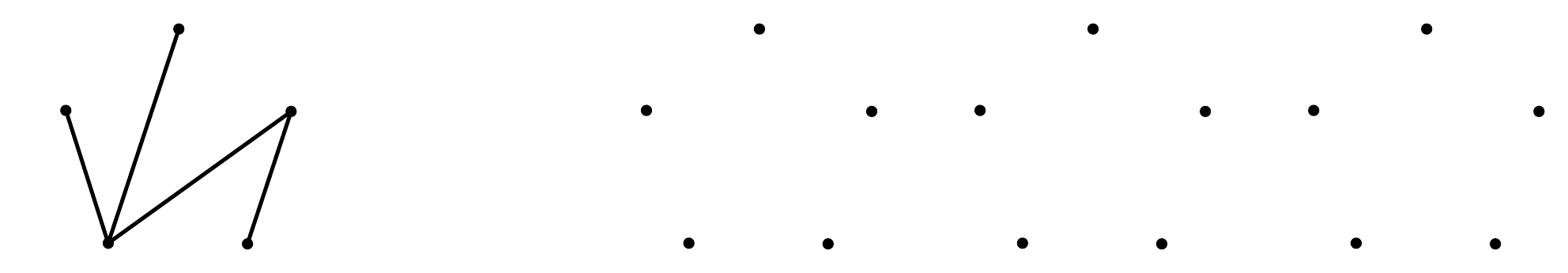
			\caption{A labeling of a non-crossing spanning tree given by $(13)(35)(12)(34)\in \red(c)$ induces a chamber in the apartment corresponding to it.}%
			\label{fig:chambers}%
		\end{center}
	\end{figure}

	\begin{remark}
		The complex $|\ncp_n|$ is \emph{not} a building if $n > 3$. Axiom (B1) of Definition \ref{ax:b2} is not satisfied, as there are simplices, which are not contained in a common apartment. Crossing edges are an example.
	\end{remark}
	
	To finish the discussion about type $A$ non-crossing partitions, we reinterpret the supersolvability of $\ncp_n$ in geometric terms and give a new proof of the fact that $\nc(A_n)$ is supersolvable. Moreover, we characterize the $M$-chains in $\ncw$. The supersolvability plays a crucial role in computing the radius of the Hurwitz graph $H(S_n)$ in Theorem \ref{thm:radius}.

	\begin{definition}
		A finite lattice $L$ is called \emph{supersolvable}, if there exists a maximal chain $d \subseteq L$ such that for every chain $c \subseteq L$ the sublattice generated by $c$ and $d$ is distributive. Such a chain $d$ is called an $M$-chain.
	\end{definition}

	Before we can state and prove the main result of this subsection, we need some more definitions.
	
	\begin{definition}
		A partition $\pi \in \ncp_n$ is called \emph{universal}, if $\pi$ has exactly one block $B$ with more than one element and the elements of $B$ are circularly consecutive in $P(\pi)$. 
		A chamber in $|\ncp_n|$ is called \emph{universal}, if every partition of the corresponding maximal chain in $\ncp_n$ is universal. Compare \cite[Def. 4.5]{hks}. An example for a universal chamber is shown in Figure \ref{fig:D-aptm}.
		
		For a non-crossing partition $\pi=\{B_1, \ldots, B_k\} \in \ncp_n$  and a subset $M \subseteq \{1, \ldots, n\}$, the partition \emph{induced} by $M$ is defined as the partition $\pi^M = \{B_1 \cap M, \ldots, B_k\cap M\}$. Note that the induced partition is again non-crossing and can be identified with a partition in $\ncp_{\#M}$. Induced chambers are defined analogously.
	\end{definition}
	
	\begin{prop}\label{prop:ncpn_union}
		Let $D$ in $|\ncp_n|$ be a chamber. Then $|\ncp_n|$ is the union of all apartments containing $D$ if and only if $D$ is a universal chamber.
	\end{prop}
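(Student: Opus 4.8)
The plan is to first reduce the statement to a purely combinatorial assertion about non-crossing spanning trees, and then treat the two implications separately.

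First I would record the reduction. Since $|\ncp_n|$ is a chamber complex (Lemma~\ref{lem:cc}) which is a union of apartments (Corollary~\ref{cor:ncw_union_aptm}), and every apartment — being the order complex of a Boolean lattice (Definition~\ref{def:NCPapartments}) — is the union of its chambers, the union of all apartments containing $D$ is all of $|\ncp_n|$ if and only if every chamber $E$ of $|\ncp_n|$ lies in some apartment that also contains $D$; for the non-obvious implication one uses that a point in the relative interior of the maximal simplex $E$ lies in some apartment $A\ni D$, which forces $E$ to be a simplex of $A$. Via Lemma~\ref{lem:lattice_iso} I would identify $D$ with a maximal chain $\hat{0}=\pi_0\lessdot\pi_1\lessdot\dots\lessdot\pi_{n-1}=\hat{1}$ in $\ncp_n$, and via the bijection between apartments of $|\ncp_n|$ and non-crossing spanning trees of the $n$-gon I would identify apartments with such trees. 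The key elementary fact to establish is a \emph{compatibility criterion}: the apartment $A_\Gamma$ of a non-crossing spanning tree $\Gamma$ contains $D$ if and only if the induced subgraph $\Gamma[B]$ is connected for every block $B$ occurring in the chain of $D$; equivalently, for each $k$ the tree $\Gamma$ has exactly one edge joining the two $\pi_{k-1}$-blocks that are merged to form $\pi_k$. Both implications follow by induction on $k$, using that a subtree of $\Gamma$ spanning a block has one edge fewer than a subtree spanning the union of two merged blocks, together with Lemma~\ref{lem:product_join}, which identifies the join of a set of edges of $\Gamma$ with the partition into the connected components of that edge set. I will call such a $\Gamma$ \emph{compatible} with $D$; the proposition then asks for which $D$ every chamber of $|\ncp_n|$ admits a common compatible tree with $D$.

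For the direction ``$D$ not universal $\Rightarrow$ some $E$ shares no apartment with $D$'' I would argue by cases on the smallest index $k$ with $\pi_k$ not universal, so that $\pi_{k-1}$ is universal with (possibly empty) non-trivial block $B$. If $k=1$, then $\pi_1$ is a $2$-block $\{a,b\}$ that is not a polygon edge, so for $n\ge4$ there is a chord $(p,q)$ crossing $(a,b)$; every $\Gamma$ compatible with $D$ contains $(a,b)$, hence is incompatible with any chamber $E$ whose chain starts with the block $\{p,q\}$. If $k\ge2$ and $\pi_k$ arises by merging two singletons $\{a\},\{b\}$ of $\pi_{k-1}$ (so $\pi_k$ has the two non-trivial blocks $B$ and $\{a,b\}$), write $c_1,c_2$ for the endpoints of the interval $B$, with $a$ closer to $c_2$ along the arc outside $B$; then the $2$-blocks $\{a,c_2\}$ and $\{b,c_1\}$ are non-crossing, so some chamber $E$ has both of them in its chain, and any $\Gamma$ compatible with $E$ then contains the chords $(a,c_2)$ and $(b,c_1)$, which together with $(a,b)$ and a subtree spanning $B$ would produce a cycle in the tree $\Gamma$ if $\Gamma$ were also compatible with $D$. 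Finally, if $k\ge2$ and $\pi_k$ merges $B$ with a singleton $\{a\}$ so that $B\cup\{a\}$ is not an interval, then $a$ is non-adjacent to $B$; every $\Gamma$ compatible with $D$ contains some chord $(a,y)$ with $y\in B$, all of which are crossed by the chord $(p,q)$ on the two polygon-neighbours $p,q$ of $a$, so $\Gamma$ is incompatible with a chamber $E$ whose chain starts with $\{p,q\}$. In each case one extends the partial chain to an actual chamber $E$, witnessing failure.

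For the converse I would induct on $n$. For $n\le3$ the complex $|\ncp_n|$ is itself a building, so any two chambers lie in a common apartment (and, consistently, every chamber is universal); this is the base case. For $n\ge4$ let $D$ be universal; its chain has non-trivial blocks forming a strictly increasing chain of circular intervals $I_1\subsetneq\dots\subsetneq I_{n-1}=\{1,\dots,n\}$, and I let $v$ be the unique vertex of $I_{n-1}\setminus I_{n-2}$ (the vertex absorbed last) and $M=\{1,\dots,n\}\setminus\{v\}$ with its induced cyclic order. Restricting the chain of $D$ to $M$ yields a universal chamber $D^M$ of the non-crossing partition complex of $M$, and restricting the chain of an arbitrary chamber $E$ to $M$ yields a maximal chain $E^M$ there, with exactly one merge collapsing — namely the first one involving the block of $v$, whose target block I call $C\subseteq M$. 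By induction there is a non-crossing spanning tree $\Gamma'$ on $M$ compatible with both $D^M$ and $E^M$, and I would take $\Gamma=\Gamma'\cup\{(v,w)\}$ for a suitable $w\in C$: compatibility with $D$ holds for \emph{any} $w\in M$, because the last merge of $D$ joins the full interval $I_{n-2}=M$ to $\{v\}$ and $\pi_{n-2}\vee(v,w)$ has maximal rank, hence equals $\hat{1}$; and compatibility with $E$ holds once $w\in C$, by splicing the edge $(v,w)$ into the edge-enumeration of $\Gamma'$ realising $E^M$ at the place of the collapsed merge. The remaining point — which I expect to be the main obstacle — is that $\Gamma'$ must be chosen so that $\Gamma'\cup\{(v,w)\}$ is still non-crossing for some $w\in C$, i.e. so that $C$ meets the face of $\Gamma'$ incident to the boundary arc at $v$. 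To handle this I would strengthen the induction hypothesis to also record which vertices are ``visible'' from a prescribed boundary arc, exploiting that compatibility with the \emph{universal} $D^M$ forces every arc of $M$ occurring in $D^M$'s absorption order to induce a subtree of $\Gamma'$; this makes $\Gamma'$ sufficiently path-like near the boundary to guarantee the required visibility. Combining the three cases with the reduction of the first paragraph then completes the proof.
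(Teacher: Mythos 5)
Your reduction to the existence of a common non-crossing spanning tree, the compatibility criterion (every block of every partition in the chain must span a subtree of $\Gamma$), and the whole direction ``$D$ not universal $\Rightarrow$ the union is proper'' are correct; your three-case analysis there is in fact considerably more explicit than the paper's, which only remarks that one easily produces crossings or cycles among the labeled trees. The gap is in the converse, precisely at the step you yourself flag as ``the main obstacle'' and then only sketch. Your induction deletes the vertex $v$ absorbed last by $D$ and re-attaches it to the block $C$ by a single edge $(v,w)$ with $w\in C$; but the induction hypothesis as stated only yields \emph{some} tree $\Gamma'$ compatible with $D^M$ and $E^M$, and such a $\Gamma'$ need not admit any non-crossing edge from $v$ into $C$. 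Concretely, take $n=6$, $v=6$, $D$ the universal chamber with blocks $\{1,2\}\subset\{1,2,3\}\subset\dots\subset\{1,\dots,6\}$, and $E$ the chamber with chain $\{3,6\}\lessdot\{2,3,6\}\lessdot\bigl\{\{2,3,6\},\{4,5\}\bigr\}\lessdot\{2,3,4,5,6\}\lessdot\hat{1}$, so that $C=\{3\}$. The tree $\Gamma'=\{(1,2),(2,3),(2,4),(4,5)\}$ is non-crossing and compatible with both $D^M$ and $E^M$, yet the only admissible new edge $(6,3)$ crosses $(2,4)$. A good choice ($\{(1,2),(2,3),(3,4),(4,5)\}$) does exist here, but nothing in your argument produces it: the ``visibility'' strengthening of the induction hypothesis is not bookkeeping, it is the actual content of the proof, and it is missing.

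For comparison, the paper sidesteps this difficulty by inducting on the \emph{arbitrary} chamber $C$ rather than on $D$: it takes the unique edge $e=(i,j)$ of the rank-one element of $C$; if $i,j$ are non-adjacent it splits the polygon along $e$ into the two arcs $M$ and $N$ (each containing $i$ and $j$), applies induction on each side, and glues the two trees along the common edge $e$ --- since the two trees live in the two closed regions cut out by the chord $e$, the union is automatically non-crossing, and universality of $D$ ensures each of its blocks restricts to an interval on each side and remains connected after gluing (a block meeting both sides must contain $i$ or $j$). If $i,j$ are adjacent it deletes $j$ and re-attaches the boundary edge $e$, which crosses nothing. To salvage your vertex-deletion scheme you would need to prove, by a strengthened induction, that $\Gamma'$ can always be \emph{chosen} so that some vertex of $C$ is visible from $v$; my example shows this requires actively selecting among the compatible trees, not merely observing a property shared by all of them.
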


	An example of a universal chamber $D$ in $|\ncp_4|$ and all apartments containing it is displayed in Figure \ref{fig:D-aptm}. The statement of the proposition can be verified by inspection of the order complex $|\ncp_4|$ shown in Figure \ref{fig:ncp4}.
	
	\begin{figure}[h]%
		\begin{center}
			\def\svgwidth{5cm}
			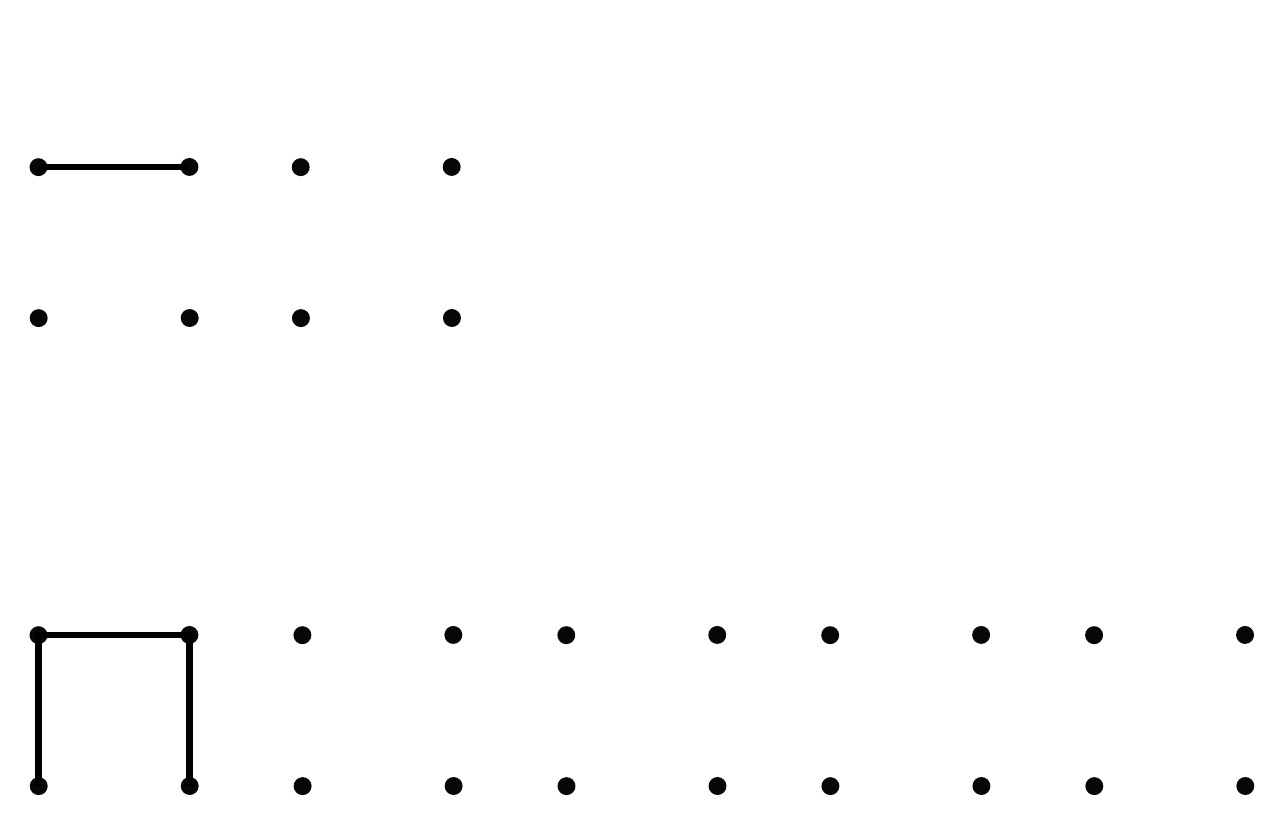
			\caption{A universal chamber $D$ in $|\ncp_4|$ and all apartments containing it.}%
			\label{fig:D-aptm}%
		\end{center}
	\end{figure}

	\begin{proof}
		The statement that $|\ncp_n|$ is the union of all apartments containing the chamber $D$ is equivalent to the following statement: For every chamber $C$ of $|\ncp_n|$ there is an apartment containing both $D$ and $C$. This means that there is a non-crossing spanning tree, such that both $D$ and $C$ arise as chambers of this apartment as described in Example \ref{ex:tree_chambers}.
		
		If $D$ is not a universal chamber, it is easy to construct a chamber $C$, such that they are not in a common apartment: there are crossings or cycles among the edges of the respective labeled spanning trees. 
		
		Now suppose that $D$ is a universal chamber and $C = (C_1, \ldots, C_{n-1})$ is an arbitrary chamber in $\ncp_n$ given in their pictorial representations, where $C_i$ is the $i$th partition in the chain defining $C$.  We argue by induction on $n$. The statement if trivial for $n=2$ and $3$, and for $n=4$, it is shown in Figure \ref{fig:D-aptm}. Suppose that $n > 4$. 
		
		Let $e=(i,j)$ be the unique edge in $C_1$.
		First, suppose that $i$ and $j$ are not consecutive on the circle. Then $e$ divides $\{1, \ldots, n\}$ into two parts $M=\{m:\ i \leq m \leq j\}$ and $N=(\{1, \ldots, n\} \setminus M) \cup \{i,j\}$.
		
		Let $C^M, C^N$ and $D^M, D^N$ be the chambers induced by $M$ and $N$, respectively. 
		Since $i$ and $j$ are not consecutive, we get that $\#M, \#N < n$ and we can use induction to find trees $T_M$ and $T_N$ corresponding to apartments containing the chambers $C^M$ and $D^M$, and $C^N$ and $D^N$, respectively. Since both trees contain the edge $e$, the merging of $T_M$ and $T_N$ along $e$ provides an apartment containing $C$ and $D$.
		
		If $i$ and $j$ are consecutive, consider the induced chambers on $M=\{1, \ldots, n\}\setminus\{j\}$ and use induction to get a tree $T$ for the induced chambers $C^M$ and $D^M$. The tree for the apartment containing $D$ and $C$ is then $T$ with the edge $e$ added.
	\end{proof}
	
	\begin{remark}
		The universal chambers in $\ocncw$ and the $M$-chains in $\ncw$ are in bijection via the canonical map $\tilde{g}$ between maximal chains in $\ncw$ and chambers in $\ocncw$ by Remark \ref{rem:bij_chains-chambers}.
	\end{remark}
	
	\begin{cor}\label{cor:Asupersolvable}
		The lattice $\ncp_n$ is supersolvable.
	\end{cor}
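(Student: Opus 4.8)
The plan is to single out one distinguished maximal chain of $\ncp_n$ and show that it is an $M$-chain, using Proposition~\ref{prop:ncpn_union} together with the fact that every apartment of $|\ncp_n|$ is a Boolean, hence distributive, \emph{sublattice} of $\ncp_n$. First I would fix a universal chamber $D$ of $|\ncp_n|$; concretely, one may take the maximal chain
\[
d\colon\qquad \hat 0\ <\ \{1,2\}\{3\}\cdots\{n\}\ <\ \{1,2,3\}\{4\}\cdots\{n\}\ <\ \cdots\ <\ \hat 1
\]
of $\ncp_n$, each proper member of which has a single non-singleton block which is, moreover, circularly consecutive, and set $D = \tilde g(d)$ (cf.\ Remark~\ref{rem:bij_chains-chambers}). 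The claim I want is that $d$ is an $M$-chain.

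Next I would take an arbitrary chain $c \subseteq \ncp_n$. After discarding $\hat 0$ and $\hat 1$ if they occur, the remaining elements of $c$ form a (possibly empty) simplex of $|\ncp_n|$, hence a face of some chamber $C$. Since $D$ is universal, Proposition~\ref{prop:ncpn_union} gives that $|\ncp_n|$ is the union of all apartments containing $D$, so $C$ is a chamber of some apartment $A$ with $D \subseteq A$. Writing $A = |B|$ for the maximal Boolean lattice $B \in \B(\ncp_n)$ it comes from, both $c$ and $d$ lie in $B$: their elements are among the vertices of $C$, respectively of $D$, together with $\hat 0$ and $\hat 1$, all of which belong to $B$.

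The remaining point to establish is that $B$ is genuinely a sublattice of $\ncp_n$, not merely a Boolean subposet. Here I would invoke the dictionary between apartments of $|\ncp_n|$ and non-crossing spanning trees: $B$ is precisely the collection of partitions $\pi_F$ whose blocks are the connected components of a subforest $F$ of a fixed non-crossing spanning tree $T$. Since $\ncp_n$ is a meet-subsemilattice of the classical partition lattice, $\pi_F \wedge \pi_{F'} = \pi_{F \cap F'} \in B$; and since $T$ is non-crossing, the coarsening $\pi_{F \cup F'}$ is again non-crossing, whence $\pi_F \vee \pi_{F'} = \pi_{F \cup F'} \in B$. Thus $B$ is a distributive sublattice of $\ncp_n$ containing $c \cup d$, so the sublattice of $\ncp_n$ generated by $c$ and $d$ is a sublattice of $B$, and therefore distributive. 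As $c$ was arbitrary, this shows $d$ is an $M$-chain, so $\ncp_n$ is supersolvable.

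The hard part is the third step — verifying that the apartments of $|\ncp_n|$ are distributive sublattices; this rests on the tree description of apartments together with $\ncp_n$ inheriting meets (and, for subforests, also joins) from the classical partition lattice. Everything else is an application of Proposition~\ref{prop:ncpn_union} and the elementary fact that a sublattice of a distributive lattice is distributive. The sublattice claim can alternatively be obtained coordinate-free by computing meets inside $\L(V)$ via the embedding $f$ of Proposition~\ref{prop:emb}, using that the roots indexing a reduced factorization of $c$ form a basis of $V$ (Observation~\ref{obs:basis}); that argument works in arbitrary type, but the spanning-tree picture is the most transparent here.
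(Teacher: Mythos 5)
Your proof is correct and follows essentially the same route as the paper's: fix a universal chamber $D$, use Proposition~\ref{prop:ncpn_union} to place any given chain in a common apartment with $D$, and conclude by distributivity of Boolean lattices. The only difference is that you explicitly verify two points the paper leaves implicit — that non-maximal chains reduce to the chamber case, and that an apartment is a genuine sublattice of $\ncp_n$ (closed under the ambient meet and join via the spanning-tree description) rather than merely a Boolean subposet — which is a worthwhile clarification but not a different argument.
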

	
	\begin{proof}
		Let $D$ be a universal chamber in $|\ncp_n|$. Then every other chamber $C$ in $|\ncp_n|$ is in a common apartment $A$ with $D$. This means, that the corresponding maximal chains $c$ and $d$ are in a common Boolean lattice, which is distributive by definition. Hence, the sublattice generated by $c$ and $d$ is distributive and the assertion follows.
	\end{proof}
	
	\begin{remark}
		Björner and Edelman already showed  in \cite{bj} that $\ncp_n$ is shellable and for general types the analogous result for all finite Coxeter groups (i.e. that $\ncw$ is shellable) can be found in \cite{abw}.  An alternative way of proving shellability is to construct a linear ordering on the chambers, which is induced by the distance to a fixed universal chamber. 
	\end{remark}

	\subsection{Type $B$}\label{sec:typeB}
	The Coxeter group $W$ of type $B_n$ is the group of all \emph{signed permutations} of the set $\{1,\ldots,n,-1,\ldots,-n\}$. A permutation $\pi$ of $\{1, \ldots, -n\}$ is called \emph{signed}, if $\pi(-i)=-\pi(i)$ for all $i$. For further details see Chapter 8 of \cite{bb} or Section 3 of \cite{bra_watt_kpi}.
	
	We use the notation from \cite{bra_watt_kpi}. 
	Let $i_1, \ldots, i_k \in \{1, \ldots, -n\}$. If  $(i_1, \ldots, i_k)$ is disjoint from $(-i_1, \ldots, -i_k )$, then their product 
	is denoted by $\dka i_1, \ldots, i_k \dkz$.
	If we have $(i_1, \ldots, i_k,-i_1, \ldots, -i_k ) =(-i_1, \ldots, -i_k,i_1, \ldots, i_k )$, we will write $[i_1, \ldots, i_k]$ for this permutation.
	The set of reflections is 
	\[
	T(B_n)=\{\dka i,j\dkz :\ 1 \leq i < |j| \leq n\} \cup \{[i]:\ 1\leq i \leq n\}.
	\]

	Throughout this section, we will fix the Coxeter element $c=s_1\ldots s_{n-1} s_n=[1, \ldots, n]$ of $W$.
	
	As in type $A$, we start with explicitly constructing an embedding of  $|\nc(B_n)|$ into a finite spherical building. 
	A root system of type $B_n$ in $\R^n$ is given by
	\[
	\{\pm \varepsilon_i \pm \varepsilon_j \ | \ 1 \leq i < j \leq n\} \cup \{\pm\varepsilon_i \ | \ 1 \leq i \leq n \},
	\] 
	where $\varepsilon_i \in \R^n$ denotes the $i$th standard basis vector. Under the standard geometric representation, the reflection $[i]$ corresponds to the root $\varepsilon_i$, $\dka i, j \dkz$ corresponds to $\varepsilon_i - \varepsilon_j$ and $\dka i, -j \dkz$ corresponds to $\varepsilon_i + \varepsilon_j$ for positive $i$ and $j$. Let $e_1, \ldots, e_n$ be the standard basis vectors of $\F_3^n$.

	\begin{prop}\label{prop:bn_emb}
		The map $ f'\colon T(B_n) \to \L(\F_3^n)$ defined by 
		\[ t\mapsto 
		\begin{cases}
		\langle e_i\rangle &\text{if } t=[i], 1 \leq i \leq n\\
		\langle e_i - e_{j} \rangle &\text{if } t=\dka i,j \dkz,\ 1\leq i<j \leq n\\
		\langle e_i + e_{j} \rangle &\text{if } t=\dka i,-j \dkz,\ 1\leq i<j \leq n
		\end{cases}
		\]
		is injective and induces an embedding $f\colon |\nc(B_n)| \to |\L(F_3^n)|$ of the complex of non-crossing partitions of type $B_n$ into a finite building of type $A_{n-1}$ defined over $\F_3$.
	\end{prop}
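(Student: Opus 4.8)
The plan is to realize $f'$ as the mod-$3$ reduction of the root map that underlies Proposition~\ref{prop:crys}, and then to deduce everything from that proposition once two elementary checks have been made. Under the standard geometric representation recalled above, the reflection $[i]$ has root $\varepsilon_i$, the reflection $\dka i,j\dkz$ has root $\varepsilon_i-\varepsilon_j$, and $\dka i,-j\dkz$ has root $\varepsilon_i+\varepsilon_j$, for $1\le i<j\le n$. Identifying $\varepsilon_i$ with $e_i$ and reducing coefficients modulo $3$, the vector $\beta_t$ from the proof of Proposition~\ref{prop:crys} is exactly $e_i$, $e_i-e_j$, respectively $e_i+e_j$, so that $f'(t)=\langle\beta_t\rangle$ for every $t\in T(B_n)$. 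In other words, $f'$ is precisely the restriction to reflections of the composite $\nc(B_n)\to\L(V_\Z)\to\L(\F_3^n)$ constructed in Proposition~\ref{prop:crys}. Hence it is enough to verify (i) that $p=3$ is an admissible prime in that construction, i.e.\ that $\rho_3$ is faithful, and (ii) that $f'$ is injective on $T(B_n)$.

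For (i): the Coxeter group of type $B_n$ acts on $\R^n$ by signed permutation matrices, all of whose entries lie in $\{0,1,-1\}$. Since $0$, $1$ and $-1$ are pairwise distinct in $\F_3$, the reduction $\gl_n(\Z)\to\gl_n(\F_3)$ is injective on the set of signed permutation matrices, so $\rho_3$ is faithful and $p=3$ may indeed be chosen in Proposition~\ref{prop:crys}. (The prime $2$ would not work at all, since $e_i+e_j=e_i-e_j$ over $\F_2$ and $f'$ would then identify $\dka i,j\dkz$ with $\dka i,-j\dkz$; this is why $3$ is the minimal working prime.) For (ii): the line $\langle e_i\rangle$ is spanned by a vector with a single nonzero coordinate, while $\langle e_i\pm e_j\rangle$ is spanned by a vector with exactly two nonzero coordinates, so the images of the $[i]$ never coincide with those of the $\dka\cdot,\cdot\dkz$; two distinct index pairs $\{i,j\}$ give spanning vectors with distinct supports; and $\langle e_i-e_j\rangle=\langle e_i+e_j\rangle$ would force $1=-1$ in $\F_3$. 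Thus $f'$ is injective.

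With (i) and (ii) in hand, extend $f'$ to all of $\nc(B_n)$ exactly as in the type $A$ case: for $\pi\in\nc(B_n)$ with a $T$-reduced decomposition $t_1\ldots t_k$, put $f(\pi)\coloneqq f'(t_1)\vee\ldots\vee f'(t_k)$ in $\L(\F_3^n)$. By Proposition~\ref{prop:crys} (compare also Lemma~\ref{lem:join_assoc}) this does not depend on the chosen reduced decomposition and defines an injective, rank-preserving poset map $f\colon\nc(B_n)\to\L(\F_3^n)$ restricting to $f'$ on reflections. Passing to order complexes yields a simplicial embedding $f\colon|\nc(B_n)|\to|\L(\F_3^n)|$, and $|\L(\F_3^n)|$ is a spherical building of type $A_{n-1}$ by Proposition~\ref{prop:LV_building}, which is the assertion.

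The only genuinely delicate point is the well-definedness of this extension: a priori the join $f'(t_1)\vee\ldots\vee f'(t_k)$ could depend on the reduced factorization of $\pi$, and one must also know that reduction modulo $3$ does not collapse any part of the lattice. Both are already contained in Proposition~\ref{prop:crys}, whose proof builds exactly this map and shows it is an injective rank-preserving poset map as soon as a faithful $\rho_p$ has been fixed. So the new content here is only the two routine verifications above, and I do not expect a serious obstacle.
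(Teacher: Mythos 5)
Your proposal is correct and follows essentially the same route as the paper: the paper's proof simply says it is the same as that of Proposition~\ref{prop:An_emb}, i.e.\ injectivity of $f'$ on reflections plus extension to all of $\nc(B_n)$ via joins using Lemma~\ref{lem:join_assoc}, within the framework of Proposition~\ref{prop:crys} specialized to $p=3$. You merely spell out the details the paper leaves implicit (faithfulness of $\rho_3$ via signed permutation matrices, the explicit injectivity check, and why $p=2$ fails), so there is nothing to object to.
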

	
	\begin{proof}
		The proof is the same as the proof for Proposition \ref{prop:An_emb}.
	\end{proof}
	
	We now recall the correspondence between the lattice $\nc(B_n)$ and the pictorial presentation from \cite{rei}. 
	
	\begin{definition}
		A partition $\pi = \{B_1, \ldots, B_k\}$ of $\{1,\ldots, n,-1,\ldots,-n\}$ is called a \emph{$B_n$-partition}, if the following two conditions are satisfied:
		\begin{itemize}
			\item[(i)]  $B \in \pi$ if and only if $-B \in \pi$
			\item[(ii)] there is at most one block $B \in \pi$ with $B=-B$ 
		\end{itemize}
		A block $B \in \pi$ with $B=-B$ is called \emph{zero block}.
	\end{definition}
	
	Since every $B_n$-partition can be identified with a partition of $\{1, \ldots, 2n\}$ via
	\[
	\{1,\ldots, n, -1, \ldots, n\} \to \{1, \ldots, 2n\},\quad i  \mapsto
	\begin{cases}
	i &\text{ if }i>0\\
	n-i &\text{ if }i<0
	\end{cases}
	\]
	the pictorial representation of a $B_n$-partition is defined analogously to the $A_n$-partition. Hence, for a $B_n$-partition $\pi$, its pictorial representation $P(\pi)$ is the convex hull of its blocks, where the vertices of the $2n$-gon are labeled with $1, \ldots, n, -1, \ldots, -n$ clockwise in this order. A $B_n$-partition $\pi$ is called \emph{non-crossing}, if there are no crossing
	blocks in $P(\pi)$. The set of non-crossing partitions of type $B_n$ is denoted by $\ncb_n$. This is a graded lattice, ordered by refinement and rank defined by $\rk(\pi)=n-\lfloor\frac{\#\pi}{2}\rfloor$ \cite[Prop. 2]{rei}. In analogy to type $A$, the join is the non-crossing span and the meet is given by intersection. Brady and Watt showed that the pictorial notion of non-crossing partitions coincides with the group theoretic definition of $\nc(B_n)$ \cite[Thm. 4.9]{bra_watt_kpi}.

	\begin{thm}\label{thm:bn_iso}
		For all $n \in \N$, the map
		\[
		\nc(B_n) \to \ncb_n, \quad w \mapsto P(\{w\})
		\]
		is a lattice isomorphism.
	\end{thm}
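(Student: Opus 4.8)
\emph{Strategy.} I would reduce the statement to the type $A$ isomorphism $\nc(S_m)\cong\ncp_m$ of Lemma~\ref{lem:lattice_iso} by realizing $\nc(B_n)$ as the ``centrally symmetric part'' of $\nc(S_{2n})$. Using the bijection $\{1,\ldots,n,-1,\ldots,-n\}\to\{1,\ldots,2n\}$ from the paragraph preceding the theorem, one embeds $B_n$ into $S_{2n}$ as the group of signed permutations; under this the Coxeter element $c=[1,\ldots,n]$ becomes the long cycle $\overline c=(1,2,\ldots,2n)$, a Coxeter element of $S_{2n}$, and the total sign change $[1]\cdots[n]$ becomes the antipodal involution $\gamma=(1,n+1)(2,n+2)\cdots(n,2n)$, which commutes with $\overline c$. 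Hence conjugation by $\gamma$ is an order automorphism $r$ of $\nc(S_{2n})$, and under Lemma~\ref{lem:lattice_iso} it is precisely the half-turn rotation of the $2n$-gon, i.e.\ central symmetry. Moreover Reiner's $B_n$-partitions correspond, under the same bijection, exactly to the $r$-invariant elements of $\ncp_{2n}$: axiom (i) is $r$-invariance, and axiom (ii) becomes automatic, since distinct blocks of a non-crossing partition have disjoint convex hulls while any $r$-invariant block contains a pair of antipodal vertices and thus the centre, so at most one block can be $r$-invariant. It therefore suffices to show that the embedding $B_n\hookrightarrow S_{2n}$ carries $\nc(B_n)$ bijectively and order-isomorphically onto the subposet of $r$-fixed elements of $\nc(S_{2n})$, and that the induced map $\nc(B_n)\to\ncb_n$ is $w\mapsto P(\{w\})$; the last point is a direct unwinding of the definitions of $\{w\}$ and of the picture $P$.

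\emph{Main step.} The bijectivity onto the $r$-fixed part rests on a reflection-length computation. For $v\in B_n$, Carter's Lemma~\ref{lem:carter} for the $B_n$ root system gives $\ell(v)=n-k(v)$, where $k(v)$ is the number of antipodal pairs $\{z,-z\}$ (with $z\neq -z$) of cycles of $v$ on $\{1,\ldots,-n\}$, while in $S_{2n}$ one has $\ell_{S_{2n}}(u)=2n-\#\mathrm{cyc}(u)$; writing $j(v)$ for the number of self-paired cycles of $v$ (those with $\{z\}=-\{z\}$, i.e.\ the zero blocks) and $\overline v$ for the image of $v$, these combine to $\ell_{S_{2n}}(\overline v)=2\ell(v)-j(v)$. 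If $w\le c$ in $B_n$, then $\ell(w)+\ell(w^{-1}c)=n$ yields $k(w)+k(w^{-1}c)=n$, hence $\ell_{S_{2n}}(\overline w)+\ell_{S_{2n}}(\overline{w^{-1}c})=2n-(j(w)+j(w^{-1}c))$; comparing with $\ell_{S_{2n}}(\overline c)=2n-1$ together with the triangle inequality forces $j(w)+j(w^{-1}c)\le 1$, whereas $\mathrm{sgn}(\overline c)=-1$ forces $j(w)+j(w^{-1}c)$ to be odd, so it equals $1$ and $\overline w\le\overline c$ in $S_{2n}$. Being signed, $\overline w$ is $r$-fixed. Conversely, an $r$-fixed $u\le\overline c$ is signed (from $\gamma u\gamma^{-1}=u$ one reads off $u(-i)=-u(i)$), so $u=\overline v$ for a unique $v\in B_n$; since $P(u)$ is non-crossing by Lemma~\ref{lem:lattice_iso}, the antipodal-chords remark bounds the numbers of zero blocks of $v$ and of $v^{-1}c$ by one each, and running the bookkeeping backwards gives $k(v)+k(v^{-1}c)=n$, i.e.\ $v\le c$ in $B_n$.

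\emph{Order and conclusion.} That this bijection and its inverse preserve the order follows from the standard fact that in the absolute order of a symmetric group $v\le w$ forces the cycle partition of $v$ to refine that of $w$ — in a $T$-reduced word, every right multiplication by a reflection merges two cycles — together with the converse inside $\nc(S_m)$ supplied by Lemma~\ref{lem:lattice_iso}; transported along the embedding and the relabelling, this says that for $v,w\in\nc(B_n)$ the relation $v\le w$ is equivalent to $\overline v\le\overline w$ in $S_{2n}$, equivalently to $P(\{v\})$ refining $P(\{w\})$. Hence $w\mapsto P(\{w\})$ is an order-isomorphism of lattices, which automatically preserves joins and meets. I expect the reflection-length/cycle-counting step to be the main obstacle, being the point where ``$\le c$ in $B_n$'' and ``$\le\overline c$ in $S_{2n}$'' have to be reconciled; the rest is bookkeeping. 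Alternatively one may simply invoke \cite[Thm.~4.9]{bra_watt_kpi}, where Brady and Watt establish exactly this correspondence — the outline above reproves it through the type $A$ picture used throughout this section.
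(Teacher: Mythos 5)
The paper does not actually prove Theorem~\ref{thm:bn_iso}: it is quoted from Brady and Watt, \cite[Thm.~4.9]{bra_watt_kpi}, as the sentence immediately preceding the statement makes explicit. So your proposal is not ``the same argument in different words'' --- it is a genuine proof where the paper supplies only a citation. Your route, folding $\nc(B_n)$ into the centrally symmetric part of $\nc(S_{2n})$ and pulling the statement back through Lemma~\ref{lem:lattice_iso}, is the standard way this result is established in the literature, and the key computation is done correctly: $\ell_{S_{2n}}(\overline v)=2\ell_{B_n}(v)-j(v)$, the subadditivity bound $j(w)+j(w^{-1}c)\le 1$, and the sign argument forcing $j(w)+j(w^{-1}c)=1$ all check out, as does the observation that Reiner's axiom (ii) is automatic for non-crossing centrally symmetric partitions because two distinct zero blocks would both contain the centre of the polygon. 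What your approach buys over the citation is coherence with the rest of Section~\ref{sec:AB}: it derives the type $B$ picture from the type $A$ one actually developed in the paper, and it is the natural starting point for the $B_n$-graph description of apartments given afterwards.

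One spot deserves more care than your phrase ``the rest is bookkeeping'' suggests: order preservation for a general pair $v\le w$ in $\nc(B_n)$, not just for $w=c$. The length identity there reads $\ell_{B_n}(v)+\ell_{B_n}(v^{-1}w)-\ell_{B_n}(w)=\tfrac12\bigl(j(v)+j(v^{-1}w)-j(w)\bigr)$, and this difference could a priori equal $2$ (namely when $j(v)=j(v^{-1}w)=1$ but $j(w)=0$), in which case $\overline v\le\overline w$ would \emph{not} imply $v\le w$. You need the extra observation that if $\overline v\le\overline w$ then every zero block of $v$ sits inside a block $B$ of $w$ with $B\cap(-B)\neq\emptyset$, hence $B=-B$, so $j(v)\le j(w)$ and the bad case cannot occur; alternatively, for the forward direction one can extend $v\le w\le c$ to a $T(B_n)$-reduced word for $c$, note that exactly one letter is of type $[m]$, and read off that the expanded word in $2n-1$ transpositions of $S_{2n}$ is reduced with $\overline v$ and $\overline w$ as prefixes. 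With that supplement the argument is complete.
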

	
	\begin{example}
		The combination of Proposition \ref{prop:bn_emb} and Theorem \ref{thm:bn_iso} provides an embedding of $|\ncb_n|$ into the spherical building $\L(\F_3^n)$, which is defined via the images of edges in the $B_n$-partitions. 
		Figure \ref{fig:b3} illustrates how the complex $|\ncb_3|$ sits inside the building $|\L(\F_3^3)|$.
		Edges of the building that are not contained in $|\ncb_3|$ are shown in light gray. 
	\end{example}
	
	\begin{figure}%
		\begin{center}
			\includegraphics[width=9cm]{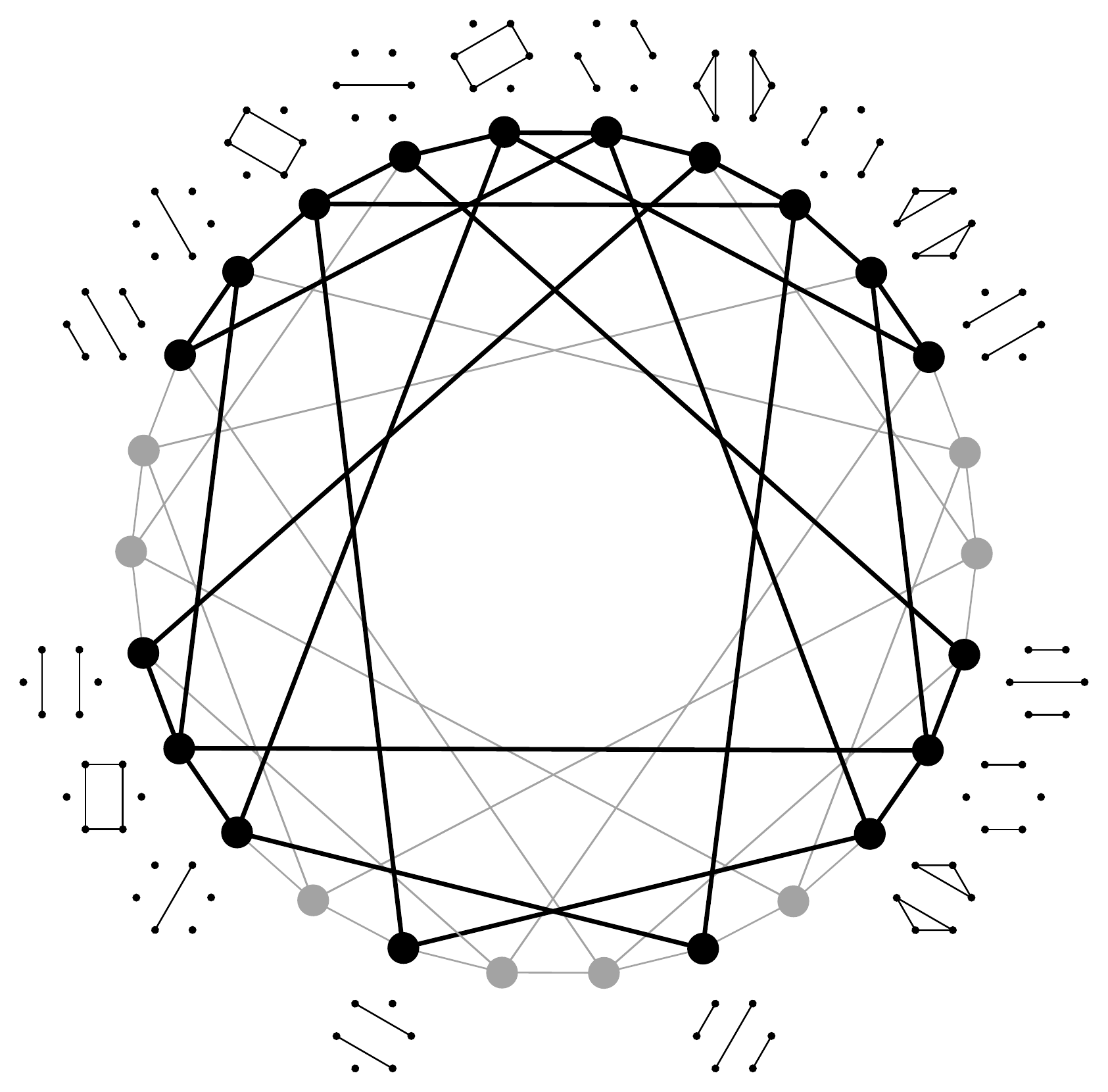}%
			\caption{The complex $|\ncb_3|$ embedded into the spherical building $|\L(\F_3^3)|$ of type $A_2$.}%
			\label{fig:b3}%
		\end{center}
	\end{figure}
	
	We now aim to describe the apartments of $|\ncb_n|$ as graphs on $2n$ vertices. We therefore introduce the notion of a $B_n$-graph.
	We denote the set of edges on the set $\{1, \ldots, n, -1, \ldots, -n\}$ by $\E_n^{\pm} \coloneqq \{(i,j):\ 1 \leq i < |j| \leq n\}$.
	
	\begin{definition}
		An edge of the form $(i, -i) \in \E_n^\pm$ is called \emph{zero edge}. A \emph{zero part} is either a collection of at most $n-1$ zero edges or the polygon spanned by a subset $B \subseteq \{1, \ldots, -n\}$ with $B=-B$. 
		
		Let $P$ be a regular $2n$-gon and $V$ its vertices, labeled $1, \ldots, n, -1, \ldots, -n$ clockwise in this order. Let $G$ be an embedded graph on the vertices $V$. 
		Then $G$ is called a \emph{$B_n$-graph}, if the following conditions are satisfied:
		\begin{enumerate}[label=(\alph*)]
			\item $G$ is invariant under $180^\circ$ rotation
			\item $G$ has exactly one zero part $Z$
			\item $G$ is a non-crossing forest outside $Z$
			\item the convex hull of $G$ equals $P$
		\end{enumerate}
	\end{definition}
	
	\begin{prop}
		The apartments in $\ncb_n$ are in one-to-one correspondence with $B_n$-graphs.
	\end{prop}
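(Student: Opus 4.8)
The plan is to combine the identification of apartments with maximal Boolean lattices (Definition~\ref{def:NCPapartments} and the bijection $\tilde h$) with the pictorial model of $\nc(B_n)$ from Theorem~\ref{thm:bn_iso}. A maximal Boolean lattice $B\in\B(\ncb_n)$ is determined by its set of atoms, an $n$-element set of reflections $\{t_1,\dots,t_n\}\subseteq T(B_n)$ (which, unlike in type $A$, need not arise from a reduced decomposition of $c$). Using $f$ from Proposition~\ref{prop:emb} together with Lemma~\ref{lem:join_assoc}, Observation~\ref{obs:basis} and Carter's Lemma~\ref{lem:carter}, I would first show that $\{t_1,\dots,t_n\}$ is the atom set of some $B\in\B(\ncb_n)$ precisely when the associated roots $\alpha_{t_1},\dots,\alpha_{t_n}$ form a basis of $\R^n=\mov(c)$. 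One direction iterates Lemma~\ref{lem:join_assoc} along a maximal chain of $B$ to get $f(t_1\vee\dots\vee t_n)=\langle\alpha_{t_1},\dots,\alpha_{t_n}\rangle$ and hence, comparing ranks with $f(c)=\R^n$, that the roots span; the other direction uses that independence of the roots forces $\rk(\bigvee_{i\in I}t_i)=|I|$ for all $I$, so the sublattice generated by the $t_i$ is isomorphic to $\B_n$ and, being of full rank, is maximal.

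Next I would define the correspondence. To the apartment $A$ with atom set $\{t_1,\dots,t_n\}$ I assign the embedded graph $G(A)=\bigcup_{i=1}^n P(t_i)$ on the $2n$-gon, where under the identification of Theorem~\ref{thm:bn_iso} a reflection $\dka i,j\dkz$ contributes the half-turn--symmetric pair of edges $\{(i,j),(-i,-j)\}$ and a reflection $[i]$ contributes the zero edge $(i,-i)$. I would then verify the four axioms of a $B_n$-graph for $G(A)$: invariance under the half-turn is clear since every reflection of $T(B_n)$ is half-turn symmetric; the non-zero atoms form a \emph{non-crossing forest} because a cycle among their edges would produce a linearly dependent set of roots, contradicting Carter's Lemma~\ref{lem:carter}, and because every partial join $\bigvee_{i\in I}t_i$ is a non-crossing $B_n$-partition; there is \emph{exactly one zero part}, which I would extract from the facts that a $B_n$-partition has at most one zero block and that the first element with a zero block along any maximal chain of $B$ is unique, so the zero edges (and, when the atoms wrap around, the polygon they close up) assemble into a single zero part; and the convex hull of $G(A)$ is the whole $2n$-gon because the roots span $\R^n$, so no antipodal vertex pair $\{k,-k\}$ can avoid being an endpoint, as otherwise all $\alpha_{t_i}$ would lie in the hyperplane $\varepsilon_k^{\perp}$.

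For the inverse I would run this backwards: given a $B_n$-graph $G$, the half-turn orbits of the edges outside the zero part $Z$ give reflections of the form $\dka i,j\dkz$, and $Z$ contributes the reflections recorded by its vertex set --- a single zero edge gives one $[i]$, and a polygon on $B=-B$ gives a canonically chosen generating set of the sub-Boolean lattice of $\nc(B_n)$ below the corresponding zero block. Axioms (a) and (c) guarantee that this yields exactly $n$ reflections whose roots are linearly independent: the forest condition outside $Z$ bounds the rank contributed there and $Z$ supplies the remaining independent roots, so the set is the atom set of some $B\in\B(\ncb_n)$, giving the assignment $G\mapsto|B|$. Finally I would check that the two assignments are mutually inverse, which reduces to the observation that $P$ and its inverse from Theorem~\ref{thm:bn_iso} identify reflections with the corresponding (pairs of) edges and that redrawing the edges of $G(A)$ recovers $G(A)$.

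The step I expect to be the main obstacle is the bookkeeping around the zero part in both directions: matching the ``at most one zero block'' condition for $B_n$-partitions with the ``exactly one zero part'' condition for $B_n$-graphs, handling the degenerate extremes (the zero part being a single zero edge, or the whole $2n$-gon), and above all showing that the inverse map is \emph{well defined} --- that a polygonal zero part spanning a set $B=-B$ determines a canonical set of reflections with independent roots which in turn redraws exactly that polygon. This is precisely where one must use Carter's Lemma~\ref{lem:carter} and the combinatorics of $\nc(B_n)$ from Theorem~\ref{thm:bn_iso}, rather than anything from building theory.
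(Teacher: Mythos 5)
The paper's own ``proof'' is a one-line pointer to the type $A$ argument in \cite{hks}, so your instinct to actually write out the atoms-to-edges dictionary and check the axioms in both directions is the right one and matches the intended route. However, there is a genuine gap at the step you rely on twice: the claimed equivalence that $\{t_1,\dots,t_n\}$ is the atom set of some $B\in\B(\ncb_n)$ precisely when the roots $\alpha_{t_1},\dots,\alpha_{t_n}$ form a basis. Linear independence only gives $\rk\bigl(\bigvee_{i\in I}t_i\bigr)\geq |I|$, since the moved space of a join contains the sum of the moved spaces; it does not force equality. Already in type $A$ the transpositions $(13),(24),(12)$ in $S_4$ have independent roots and form a spanning tree, yet $(13)\vee(24)$ is the top element of $\ncp_4$, of rank $3$, so these three do not generate a Boolean sublattice; in type $B_3$ the set $\{\dka 1,3\dkz,\ \dka 1,-2\dkz,\ [3]\}$ is an analogous counterexample. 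The rank of a join jumps exactly when the corresponding pictures cross, so the non-crossing conditions in the definition of a $B_n$-graph are not decorations to be verified after the fact: they are precisely the extra hypothesis needed to upgrade ``independent roots'' to ``Boolean lattice''. Since you invoke the false direction of your equivalence to show that the inverse assignment $G\mapsto |B|$ is well defined, that direction of the bijection is unproved. What must actually be shown is that axioms (a)--(d) of a $B_n$-graph force every partial join $\bigvee_{i\in I}t_i$ to be a non-crossing $B_n$-partition of rank exactly $|I|$; that is the real content of the proposition.

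A second, smaller imprecision: in type $B$ Carter's Lemma does not exclude all cycles among the edges, only the ``unbalanced'' ones. A half-turn-symmetric cycle enclosing the centre, such as the full hexagon arising from $\dka 1,2\dkz,\ \dka 2,3\dkz,\ \dka 1,-3\dkz$ in $B_3$ with roots $\varepsilon_1-\varepsilon_2,\ \varepsilon_2-\varepsilon_3,\ \varepsilon_1+\varepsilon_3$, has linearly independent roots. This is exactly why the zero part is allowed to be a polygon, and also why the apartment with atoms $[1],\dots,[n]$ produces $n$ pairwise crossing zero edges. You gesture at this (``when the atoms wrap around''), but the dichotomy between the forest outside $Z$ and the zero part itself requires distinguishing centre-enclosing cycles from the rest, not the blanket claim that any cycle yields dependent roots.
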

	
	The proof is the exact analog of the proof of Proposition 4.4 of \cite{hks}. 
	
	\begin{remark}
		The complex $|\ncb_n|$ is \emph{not} a building if $n>2$. As in type $A$, there are simplices in $|\ncb_n|$, which are not contained in a common apartment. Figure \ref{fig:b4-elements} shows examples for this fact.
	\end{remark}
	
	\begin{example}
		Figure \ref{fig:B3_aptm} shows all apartments containing the chamber corresponding to $\dka 1,2 \dkz \leq [1,2]$. Note that there is the following correspondence between reflections in $T(B_n)$ on the left-hand side and edges or pairs of edges in $\E_n^\pm$ on the right-hand side:
		\begin{align*}
		\dka i,j \dkz  &\longleftrightarrow (i, j),(-i,-j)\\
		[i] &\longleftrightarrow (i, -i)
		\end{align*}
	\end{example}
	
	\begin{figure}%
		\begin{center}
			\def\svgwidth{8cm}
			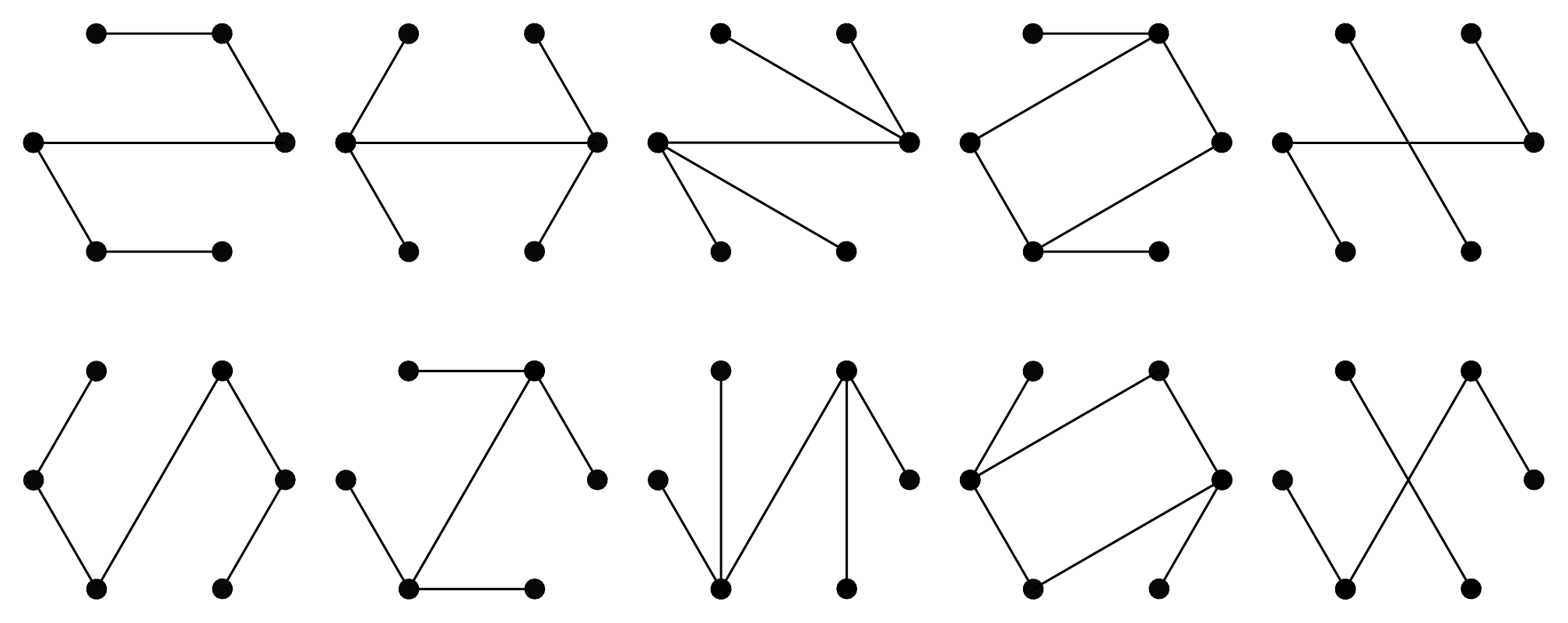
			\caption{All apartments of $\ncb_3$ containing the chamber corresponding to $\dka 1,2 \dkz \leq [1,2]$.}%
			\label{fig:B3_aptm}%
		\end{center}
	\end{figure}
	
	In order to investigate the supersolvability in type $B$, we need a characterization of supersolvability in terms of left modularity. Let $L$ be a lattice. An element $x \in L$ is \emph{left modular}, if  for all $y \leq z$
	\[
	(y \vee x) \wedge z = y \vee (x \wedge z).
	\]
	The lattice $L$ is \emph{left modular} if there is a maximal chain consisting of left modular elements. The following characterization is due to \cite[Thm. 2]{mcn_tho}.
	
	\begin{thm}
		A finite graded lattice supersolvable if and only if it is left modular. In particular, every $M$-chain consists of left modular elements.
	\end{thm}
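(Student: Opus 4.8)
The plan is to prove the equivalence by two implications, the first of which simultaneously delivers the final ``in particular'' clause. Throughout, $L$ denotes a finite graded lattice with rank function $\rho$ and bounds $\hat0,\hat1$. For the direction \emph{supersolvable $\Rightarrow$ left modular}, suppose $d\colon\hat0=d_0\lessdot d_1\lessdot\cdots\lessdot d_k=\hat1$ is an $M$-chain; I would show that \emph{each} $d_i$ is a left modular element of $L$, which proves at once that $L$ is left modular (witnessed by $d$) and that every $M$-chain consists of left modular elements. Fix $i$ and fix a comparable pair $y\le z$ in $L$. By definition of an $M$-chain, the sublattice $D\subseteq L$ generated by $d$ together with the (two-element) chain $\{y,z\}$ is distributive. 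Since meet and join in a sublattice agree with those of $L$, the distributive law applied in $D$ to $y,d_i,z$, together with $y\wedge z=y$, gives $(y\vee d_i)\wedge z=(y\wedge z)\vee(d_i\wedge z)=y\vee(d_i\wedge z)$ in $D$ and hence in $L$. As $y\le z$ was arbitrary, $d_i$ is left modular.

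For the converse \emph{left modular $\Rightarrow$ supersolvable}, fix a maximal chain $m\colon\hat0=m_0\lessdot m_1\lessdot\cdots\lessdot m_n=\hat1$ of left modular elements; the goal is to show that $m$ is an $M$-chain, i.e.\ that $\langle c\cup m\rangle$ is distributive for every chain $c$ in $L$. Two preliminary remarks set the stage. First, left modularity is inherited by sublattices: if $x$ is left modular in $L$ it remains left modular in any sublattice containing it, since the defining identity is demanded for \emph{all} comparable pairs of the ambient lattice; in particular each $m_i$ is left modular inside $\langle c\cup m\rangle$. Second, by Birkhoff's forbidden-sublattice criterion a finite lattice is distributive exactly when it contains no sublattice isomorphic to the diamond $M_3$ or the pentagon $N_5$ (and it is modular exactly when it contains no copy of $N_5$). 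So it suffices to rule out $M_3$ and $N_5$ inside $\langle c\cup m\rangle$.

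The structural core is a covering lemma for a left modular element $x$: if $y\lessdot z$ in $L$, then exactly one of (i) $x\wedge y=x\wedge z$ together with $x\vee y\lessdot x\vee z$, or (ii) $x\wedge y\lessdot x\wedge z$ together with $x\vee y=x\vee z$, holds; this is extracted from the identity $(y\vee x)\wedge z=y\vee(x\wedge z)$ and gradedness. Telescoping it along a saturated chain from $\hat0$ to an arbitrary $w$ yields, for left modular $x$, the rank identity $\rho(x\vee w)+\rho(x\wedge w)=\rho(x)+\rho(w)$. Applying this to each $m_i$, one shows that $\langle c\cup m\rangle$ is graded with the inherited rank, that the operators $v\mapsto v\vee m_i$ and $v\mapsto v\wedge m_{i+1}$ organize it into a ``grid'' each of whose covering steps moves in exactly one of the two towers, and hence that neither an $N_5$ (whose long and short sides would require incompatible ranks) nor an $M_3$ (which would violate left modularity of an appropriate $m_i$) can embed; thus $\langle c\cup m\rangle$ is distributive. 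The passage from a single generator to an arbitrary chain $c$ is an induction on the length of $c$: the sublattice generated by $m$ and a shorter chain is already distributive, still contains the left modular maximal chain $m$, and adjoining one further comparable element keeps the covering lemma applicable.

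The easy implication and the covering lemma are routine; the genuine difficulty — and the full content of McNamara--Thomas \cite{mcn_tho} — is making the structural core precise: pinning down the shape of $\langle c\cup m\rangle$ tightly enough to exclude both $M_3$ and $N_5$ at once, and carrying the induction on $c$ through while preserving the left modular maximal chain. That is where essentially all of the substance of the theorem resides, so in the paper I would simply cite \cite{mcn_tho} for it.
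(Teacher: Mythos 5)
The paper offers no proof of this statement at all: it is quoted verbatim as Theorem 2 of McNamara--Thomas \cite{mcn_tho} and simply cited. Your forward direction (supersolvable $\Rightarrow$ left modular, together with the ``in particular'' clause) is correct and complete: applying distributivity of the sublattice generated by the $M$-chain $d$ and the two-element chain $\{y,z\}$ to obtain $(y\vee d_i)\wedge z=(y\wedge z)\vee(d_i\wedge z)=y\vee(d_i\wedge z)$ is exactly the standard argument, and it is more than the paper supplies. For the converse you only sketch a route (a covering lemma for left modular elements, the rank identity $\rho(x\vee w)+\rho(x\wedge w)=\rho(x)+\rho(w)$, and exclusion of $M_3$ and $N_5$); be aware that this is not how \cite{mcn_tho} actually argue --- their proof goes through $S_n$ EL-labelings and an interpolation property rather than Birkhoff's forbidden-sublattice criterion, and several of your intermediate claims (e.g.\ that the covering dichotomy and the rank identity hold for left modular elements in an arbitrary graded lattice, and that the generated sublattice inherits the grading) would each need genuine proof. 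Since you explicitly defer the converse to the citation, just as the paper does for the whole theorem, there is no gap; but if you intended the sketch to stand as a proof, it would not yet do so.
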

	
	Now we are in the position to prove the following.
	
	\begin{thm}\label{thm:ncbn_ss}
		The lattice $\ncb_n$ is only supersolvable if $n \leq 3$.
	\end{thm}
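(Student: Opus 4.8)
The plan is to build everything on the characterisation just stated: $\ncb_n$ is supersolvable if and only if it is left modular, that is, carries a maximal chain each of whose members is a left modular element. So I must produce such a chain for $n\le 3$ and rule it out for $n\ge 4$.

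For $n\le 3$ this is a finite computation. The lattice $\ncb_1$ has two elements and $\ncb_2$ has rank $2$, so both are supersolvable with any maximal chain as $M$-chain. For $\ncb_3$ (a $20$-element lattice) one checks directly that $\id<\dka 1,2\dkz<[1,2]<[1,2,3]$ is an $M$-chain --- this is the chamber whose surrounding apartments are drawn in Figure~\ref{fig:B3_aptm}; concretely one verifies left modularity of the atom $\dka 1,2\dkz$ and of the coatom $[1,2]$.

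For $n\ge 4$ the key is a rank reduction. Suppose $\ncb_n$ is supersolvable and fix a chain $d\colon\id=x_0<x_1<\dots<x_{n-1}<x_n=c$ of left modular elements. A closed interval of a lattice carries the ambient joins and meets, so the restriction of $d$ to any interval $[x_i,x_j]$ is again a chain of left modular elements, whence each such interval is supersolvable. Now the atom $x_1$ is a left modular reflection, and using the pictorial representation one shows that the only left modular reflections of $\ncb_n$ are those of the form $\dka i,j\dkz$ with $i$ and $j$ adjacent on the $2n$-gon: a reflection $[k]$, or a reflection $\dka i,j\dkz$ with a vertex $k$ lying strictly between its endpoints, fails left modularity via an explicit pair $y\le z$ of non-crossing $B_n$-partitions built around $k$ (e.g.\ for $[1]$ one may take $y=\dka 2,-n\dkz$ and $z=\dka 1,2,-n\dkz$). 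Contracting the edge $\{i,j\}$ identifies $[x_1,c]$ with $\ncb_{n-1}$, which is therefore supersolvable. Hence $\ncb_n$ supersolvable $\Rightarrow\ncb_{n-1}$ supersolvable, and by induction it suffices to prove that $\ncb_4$ is \emph{not} supersolvable.

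The base case $\ncb_4$ is the heart of the argument. By the previous paragraph an $M$-chain of $\ncb_4$ must start with an adjacent reflection, which a symmetry of the $8$-gon lets us take to be $x_1=\dka 1,2\dkz$; then $[x_1,c]\cong\ncb_3$ and $d|_{[x_1,c]}$ is an $M$-chain of $\ncb_3$. Feeding in the $M$-chains of $\ncb_3$ found above, and using the stabiliser of $x_1$ in the dihedral symmetry group of $\ncb_4$, cuts the possibilities for $x_2$, and then for the coatom $x_3$, down to a short list. For each remaining candidate one produces an explicit pair $y\le z$ of non-crossing $B_4$-partitions --- necessarily involving partitions with a zero block, which lie outside every apartment through $d$ --- violating $(y\join x)\meet z=y\join(x\meet z)$ for the relevant $x\in\{x_2,x_3\}$. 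Finding those witnesses is the one genuinely delicate step; I expect it is handled by a short but careful case check organised by symmetry rather than by a conceptual shortcut, since the characteristic polynomial of $\ncb_n$ factors with non-negative integer roots for every $n$, so the usual necessary conditions for supersolvability give no leverage here.
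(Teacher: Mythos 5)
Your overall framework (the left-modularity criterion of McNamara--Thomas, an explicit $M$-chain for $n\le 3$, and obstructions for $n\ge 4$) matches the paper, and your $M$-chain $\dka 1,2\dkz<[1,2]$ for $\ncb_3$ is exactly the one used there. But for $n\ge 4$ you and the paper diverge, and your route has a genuine gap. The paper's key observation is to look at \emph{rank-two} elements rather than atoms: it shows that \emph{no} element of rank $2$ in $\ncb_n$ is left modular, by listing all symmetry classes of rank-two elements of $\ncb_4$ and exhibiting, for each, an explicit pair $y\le z$ violating $(y\join x)\meet z=y\join(x\meet z)$ (Figure~\ref{fig:b4-elements}), these witnesses generalizing verbatim to $n\ge 5$. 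Since every maximal chain contains a rank-two element, this kills all candidate $M$-chains at once, with no induction and no classification of left modular atoms. This is precisely the ``conceptual shortcut'' you say you could not find: the necessary condition fails not at rank one (where adjacent reflections genuinely are left modular, forcing you into your longer reduction) but already at rank two.

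The gap in your argument is that the load-bearing step is never carried out. Your reduction $\ncb_n$ supersolvable $\Rightarrow\ncb_{n-1}$ supersolvable is plausible (restriction of left modular elements to the interval $[x_1,c]\cong\ncb_{n-1}$ is sound, and your witness against $[1]$ checks out), but it rests on a classification of the left modular atoms of $\ncb_n$ for which you verify only one case, and --- more seriously --- the base case that $\ncb_4$ is not supersolvable is left as ``a short but careful case check'' whose witnesses you explicitly admit you have not found. Those witnesses are exactly the mathematical content of the theorem for $n\ge4$; without them the proof does not close. If you replace your candidate-by-candidate search over pulled-back $M$-chains of $\ncb_3$ with the paper's uniform statement ``no rank-two element of $\ncb_n$ is left modular'' and supply the finitely many obstruction pairs (one per symmetry class of rank-two elements, each built from a zero block or a crossing as in Figure~\ref{fig:b4-elements}), both your induction and the base case become unnecessary.
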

	
	\begin{proof}
		The complex $|\ncb_2|$ is a union of four points, hence $\ncb_2$ is supersolvable as apartments correspond to pairs of points. If $n=3$, Figure \ref{fig:b3} displays $|\ncb_3|$. Inspection of this figure shows that the union of all apartments containing the chamber corresponding to $\dka 1,2\dkz \leq [1,2]$ (compare Figure \ref{fig:B3_aptm}) is all of $|\ncb_3|$. Hence,  $\dka 1,2\dkz \leq [1,2]$ is an $M$-chain in $\ncb_3$, which shows supersolvability.
		
		Now let $n \geq 4$. Suppose there is an $M$-chain in $\ncb_n$ corresponding to a chamber $C$. Then every $B_n$-partition of $C$ has to be in a common apartment with every other element in $|\ncb_n|$. We show that there is no $M$-chain in $\ncb_n$ by showing that there is no left modular element of rank $2$ in $\ncb_n$.
		
		Figure \ref{fig:b4-elements} displays all classes of rank 2 elements of $\ncb_4$ in black and respective obstructions to left modularity in grey. These partitions can be easily generalized for $n \geq 5$. This proves the claim.
	\end{proof}
	
	\begin{figure}[h]%
		\begin{center}
			\includegraphics[width=12cm]{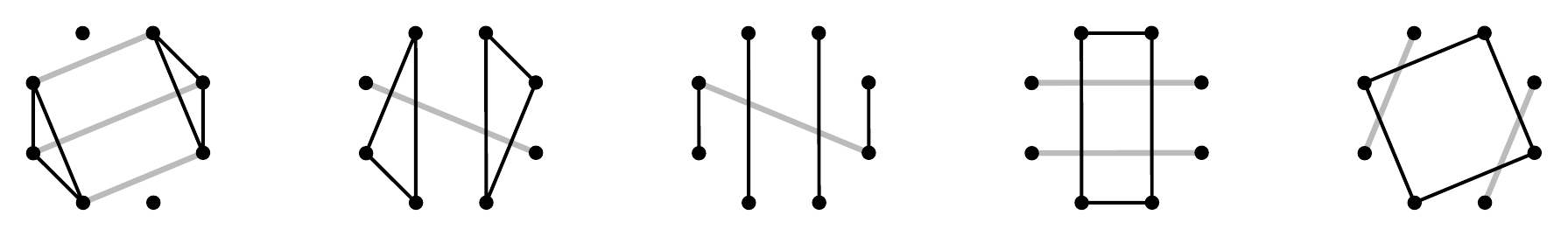}
			\caption{Obstructions to left modularity in $\ncb_4$.}%
			\label{fig:b4-elements}%
		\end{center}
	\end{figure}
	
	As a direct consequence, we get the following.
	
	\begin{cor}
		For $n\geq 4$ there is no chamber $B\in\chambers(|\ncb_n|)$ such that the union of all apartments containing $B$ is $|\ncb_n|$.
	\end{cor}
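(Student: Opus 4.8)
The plan is to derive the statement as a formal contrapositive of Theorem~\ref{thm:ncbn_ss}, using the same bookkeeping that made the proof of Corollary~\ref{cor:Asupersolvable} work. Concretely, I would assume for contradiction that for some $n\geq 4$ there is a chamber $B\in\chambers(|\ncb_n|)$ such that the union of all apartments containing $B$ equals $|\ncb_n|$, and then show that the maximal chain $d$ of $\ncb_n$ corresponding to $B$ (under the bijection $\tilde g$ of Remark~\ref{rem:bij_chains-chambers}) is an $M$-chain, so that $\ncb_n$ is supersolvable. That contradicts Theorem~\ref{thm:ncbn_ss}, which asserts supersolvability only for $n\leq 3$.

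To prove that $d$ is an $M$-chain I would proceed exactly as in Corollary~\ref{cor:Asupersolvable}. First unpack the hypothesis: saying that the union of all apartments through $B$ is everything is the same as saying that for every chamber $C$ of $|\ncb_n|$ there is an apartment containing both $B$ and $C$. Next, recall from Definition~\ref{def:NCPapartments} that every apartment of $|\ncb_n|$ is $|\mathcal B|$ for a Boolean lattice $\mathcal B\in\B(\ncb_n)$, and Boolean lattices are distributive. Now take an arbitrary chain $\gamma\subseteq\ncb_n$, extend it to a maximal chain, which is $\tilde g^{-1}(C)$ for some chamber $C$, and choose an apartment $A=|\mathcal B|$ containing both $B$ and $C$. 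Then $\mathcal B$ contains both $d$ and the maximal chain extending $\gamma$, hence it contains the sublattice generated by $\gamma$ and $d$; since distributivity is inherited by sublattices, that sublattice is distributive. As $\gamma$ was arbitrary, $d$ is an $M$-chain and $\ncb_n$ is supersolvable, giving the desired contradiction and hence the corollary.

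I do not expect a genuine obstacle here: all the real work is already done in Theorem~\ref{thm:ncbn_ss} (and in the analysis of obstructions to left modularity shown in Figure~\ref{fig:b4-elements}). The only two small points that need to be stated carefully are that an arbitrary chain always embeds into a maximal chain — so that the "for every chamber" hypothesis is actually strong enough to control arbitrary chains in the definition of an $M$-chain — and that a sublattice of a distributive lattice is again distributive; both are routine. So the corollary is essentially a one-line deduction once Theorem~\ref{thm:ncbn_ss} is available.
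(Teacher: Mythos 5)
Your proposal is correct and matches the paper's intent: the paper gives no explicit proof, stating only that the corollary is a direct consequence of Theorem~\ref{thm:ncbn_ss}, and the intended deduction is exactly your contrapositive via the $M$-chain argument of Corollary~\ref{cor:Asupersolvable}. Your two ``small points'' (extending an arbitrary chain to a maximal one, and distributivity passing to sublattices) are indeed the only details to check, and you handle them correctly.
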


	Without going into the details, we want to mention that the non-crossing partitions of type $D$ can also be interpreted in a pictorial way. This was introduced in \cite{ath_rei}. With similar considerations, it follows that $\nc(D_n)$ is only supersolvable for $n \leq 3$ as $\nc(D_3) \cong \nc(A_3)$. 
	

	\section{Hurwitz- and Chamber Graphs}\label{sec:Hurwitz}
	
	In this section we show that the Hurwitz graph $H(W)$ of a finite Coxeter group is a subgraph of the chamber graph $\Gamma_\Delta$ of a spherical building $\Delta$, which is  a consequence of the Main Theorem. We use this to compute in Theorem \ref{thm:radius} bounds on the radius and diameter of $\hw$ for all finite $W$, which partially answers Question 11.3 of \cite{ar}. Moreover, we give pictorial interpretations of the shifts $\sigma_i$ in type $A$.

	The \emph{chamber graph} $\Gamma_\Delta$ of a chamber complex $\Delta$ is the simplicial graph with vertices the chambers of $\Delta$ and two different vertices are connected by an edge, if they have a common codimension one face. The chamber graph is connected by definition of chamber complexes. Moreover, note that if $X$ is a chamber subcomplex of $\Delta$, then $\Gamma_X$ is a subgraph of $\Gamma_\Delta$.
	
	\begin{definition}\label{def:hurwitz}
		Let $W$ be a finite Coxeter group. 
		The \emph{Hurwitz graph} $H(W)$ is the undirected graph whose vertices are the maximal chains $\chains(W)$, where two chains are connected by an edge if they differ in exactly one element.
	\end{definition}

	\begin{remark}
		Note that \cite{ar} contains more than one definition of a Hurwitz graph. Our Definition \ref{def:hurwitz} differs from Definition 11.2 in \cite{ar} for arbitrary finite Coxeter groups $W$.  In type $A$ the two definitions coincide, but in general they do not. The set of vertices are in bijection, but we allow more edges. Our definition seems to be more natural and should be seen as a generalization of the definition of the type $A$ Hurwitz graph, which is Definition 2.2 in \cite{ar}. 
	\end{remark}
	
	In the following, we will work with Definition~\ref{def:hurwitz}. 
	
	\begin{prop}\label{prop:isom_graphs}
		Let $W$ be a finite Coxeter group of rank $n$. The Hurwitz graph $H(W)$ is isomorphic to a connected subgraph of the chamber graph $\Gamma_\Delta$ of a spherical building $\Delta$ of type $A_{n-1}$. 
	\end{prop}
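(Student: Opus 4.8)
The plan is to deduce this directly from the Main Theorem, concretely from Proposition~\ref{prop:oc-emb} together with Lemma~\ref{lem:cc}, using the combinatorial dictionary between maximal chains of $\ncw$ and chambers of $\ocncw$ recorded in Remark~\ref{rem:bij_chains-chambers} and Lemma~\ref{lem:chambers}. By Proposition~\ref{prop:oc-emb} the order complex $\ocncw$ embeds into the spherical building $\Delta=|\L(V)|$ of type $A_{n-1}$, and by Lemma~\ref{lem:cc} it is a chamber complex of the same dimension as $\Delta$, hence a chamber subcomplex of $\Delta$. By the observation stated just before Definition~\ref{def:hurwitz}, this already gives that $\Gamma_{\ocncw}$ is a subgraph of $\Gamma_\Delta$. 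So it suffices to exhibit a graph isomorphism $H(W)\xrightarrow{\sim}\Gamma_{\ocncw}$ and to check connectedness.

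For the isomorphism I would use the bijection $\tilde g$ of Remark~\ref{rem:bij_chains-chambers}, which sends a maximal chain $\id\leq w_1\leq\cdots\leq w_{n-1}\leq c$ of $\ncw$ to the simplex $\{w_1,\dots,w_{n-1}\}$; by Lemma~\ref{lem:chambers} this is a bijection from the vertex set $\chains(W)$ of $H(W)$ onto the vertex set $\chambers(W)$ of $\Gamma_{\ocncw}$. The one thing to verify is that $\tilde g$ both preserves and reflects adjacency. Two distinct maximal chains share the endpoints $\id$ and $c$, so they can differ only among the entries $w_1,\dots,w_{n-1}$, and they are adjacent in $H(W)$ precisely when they agree in all but exactly one of these $n-1$ entries. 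Under $\tilde g$ this says that the two corresponding chambers, each an $(n-1)$-element simplex, agree in exactly $n-2$ vertices, i.e.\ share a codimension-one face; and since $\ocncw$ is a chamber subcomplex of $\Delta$, this is the same condition whether measured inside $\ocncw$ or inside $\Delta$, and it is by definition adjacency in $\Gamma_{\ocncw}$. Reading the same computation backwards gives the converse implication, so $\tilde g$ is a graph isomorphism.

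Finally, $\Gamma_{\ocncw}$ is connected because $\ocncw$ is a chamber complex (Lemma~\ref{lem:cc}) and the chamber graph of a chamber complex is connected by definition; transporting this along $\tilde g$ shows $H(W)$ is connected as well. Alternatively, connectedness follows from the transitivity of the Hurwitz action on $\red(c)$ combined with Corollary~\ref{cor:adj_chambers}, which says that each shift $\sigma_i$ replaces a chamber by an adjacent one. Putting the three steps together yields $H(W)\cong\Gamma_{\ocncw}\subseteq\Gamma_\Delta$ with $\Gamma_{\ocncw}$ connected, which is the claim. I do not expect a real obstacle here: the substantive content is the Main Theorem, already in hand, and the only point requiring care is keeping the three notions of ``adjacency'' --- differing in exactly one chain entry, sharing a codimension-one face in $\ocncw$, and sharing a codimension-one face in $\Delta$ --- literally aligned, which is exactly what the phrase ``chamber subcomplex'' provides.
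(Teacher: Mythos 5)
Your proposal is correct and follows essentially the same route as the paper: both use the bijection $\tilde g$ of Remark~\ref{rem:bij_chains-chambers} between maximal chains and chambers, check that adjacency (differing in one element versus sharing a codimension-one face) is preserved and reflected, and then invoke the embedding of $\ocncw$ into the building $\Delta$. Your direct verification of the adjacency correspondence and your explicit treatment of connectedness via Lemma~\ref{lem:cc} are slightly more spelled out than the paper's appeal to Corollaries~\ref{cor:adj_chains} and~\ref{cor:adj_chambers}, but the substance is identical.
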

	
	\begin{proof}
		By Remark~\ref{rem:bij_chains-chambers}, the maximal chains (i.e. the vertices of $\hw$) are in bijection with the maximal chambers of $\ocncw$, which are the vertices of its chamber graph $\Gamma_{\ocncw}$. By Corollaries \ref{cor:adj_chains} and \ref{cor:adj_chambers}, two vertices of $\hw$ are connected by an edge if and only their images under the bijection are. Hence, $\hw \cong \Gamma_{\ocncw}$.  As $\ocncw$ embeds into a spherical building $\Delta$ of type $A_{n-1}$ by our Main Theorem, the assertion follows.
	\end{proof}

	\subsection{Diameter and radius of $H(W)$}
	In this subsection we give an estimate on the radius of $H(W)$ for arbitrary finite $W$ and compute it explicitly in a couple of cases. Recall that the \emph{eccentricity} of a vertex $v$ of a graph $G=(V,E)$ with vertices $V$ and edges $E$ is
	\[
	\ecc(v):=\max_{w\in V} d(v,w),
	\]
	where $d$ denotes the standard metric on the vertex set. The \emph{diameter} of $G$ is the maximal eccentricity
	\[
	\diam(G) \coloneqq \max_{v\in V}\ecc(v)
	\]
	and the \emph{radius} of $G$ is the minimal eccentricity
	\[
	\rad(G) \coloneqq \min_{v\in V}\ecc(v).
	\]

	One obviously always has the inequality $\rad(G) \leq \diam(G)$. Our main result is the following theorem. 
	
	\begin{thm}\label{thm:radius}
		Let $W$ be a finite Coxeter group of rank $n$. Then the radius of the Hurwitz graph of $W$ satisfies 
		\[ 
		\binom{n}{2}\leq \rad( H(W)). 
		\]
		If $W$ is of type $A_n$, then $\rad( H(W))= \binom{n}{2}$. Moreover, we have for $W$ of type $A_3$ that $\diam(H(W))=3$ and for $W$ being of type $A_4$ that ${\diam(H(W))=7}$.
		If $W$ is of type $B_3$, then $\rad(H(W))= \binom{3}{2}=3$ and $\diam(H(W))= 4$.
	\end{thm}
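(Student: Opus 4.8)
The plan is to prove the theorem in four parts, exploiting throughout the identification $H(W)\cong\Gamma_{\ocncw}$ from Proposition~\ref{prop:isom_graphs} and the embedding of $\ocncw$ as a chamber subcomplex of a spherical building $\Delta$ of type $A_{n-1}$ from the Main Theorem. The two structural ingredients I would lean on are: first, that in the building $\Delta$ the chamber distance equals the word-length in the Coxeter group of the apartment, so distances inside an apartment of $\ocncw$ are easy to compute; and second, that a chamber of an apartment of type $A_{n-1}$ has eccentricity exactly $\binom{n}{2}$ \emph{within that apartment}, realized by the opposite chamber.

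\textbf{Lower bound $\binom{n}{2}\le\rad(H(W))$.} Fix any chamber $C$ of $\ocncw$, corresponding via $g'$ to a reduced decomposition $t_1\ldots t_n$ of $c$, and let $A=h'(t_1\ldots t_n)$ be the apartment containing $C$ guaranteed by Lemma~\ref{lem:ch_aptm}. Inside $A$ (a Coxeter complex of type $A_{n-1}$) there is a unique chamber $C^{\mathrm{op}}$ opposite to $C$, at gallery distance $\binom{n}{2}$ in $A$. Since $A$ is a full chamber subcomplex of $\ocncw$, any gallery in $\ocncw$ from $C$ to $C^{\mathrm{op}}$ projects, via the retraction $\rho_{A,C}$ of the building onto $A$, to a gallery of at most the same length from $C$ to $C^{\mathrm{op}}$ inside $A$; hence $d_{\ocncw}(C,C^{\mathrm{op}})\ge d_A(C,C^{\mathrm{op}})=\binom{n}{2}$. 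Therefore $\ecc(C)\ge\binom{n}{2}$ for every chamber $C$, which gives $\rad(H(W))\ge\binom{n}{2}$.

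\textbf{Upper bound in type $A_n$.} Here $W=S_{n+1}$ and $\ocncw=|\ncp_{n+1}|$ has dimension $n-1$, with apartments of type $A_{n-1}$. Take $D$ to be a universal chamber of $|\ncp_{n+1}|$. By Proposition~\ref{prop:ncpn_union}, $|\ncp_{n+1}|$ is the union of all apartments containing $D$. Given any chamber $C$, pick an apartment $A$ with $D,C\in A$; then $d_{\ocncw}(D,C)\le d_A(D,C)\le\binom{n}{2}$ since any two chambers of a Coxeter complex of type $A_{n-1}$ are at distance at most $\binom{n}{2}$. Hence $\ecc(D)\le\binom{n}{2}$, and combined with the lower bound, $\rad(H(W))=\binom{n}{2}$.

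\textbf{The small cases.} For the diameter statements in types $A_3$, $A_4$ and $B_3$, and for $\rad(H(B_3))=3$, I would argue by explicit inspection of the (finite) complexes: $|\ncp_4|$ sits in the building over $\F_2$ (Figure~\ref{fig:labeling}) and $|\ncb_3|$ in the building over $\F_3$ (Figure~\ref{fig:b3}), so $\Gamma_{\ocncw}$ is a concrete finite graph on $|\red(c)|$ vertices; one computes all pairwise chamber distances directly (equivalently, runs a breadth-first search from each chamber using the adjacency from Corollary~\ref{cor:adj_chambers}). For $B_3$ the lower bound already gives $\rad\ge 3$, and exhibiting one chamber of eccentricity $3$ — e.g. the chamber corresponding to $\dka1,2\dkz\le[1,2]$, whose apartment-hull is all of $|\ncb_3|$ by the proof of Theorem~\ref{thm:ncbn_ss} — yields $\rad(H(B_3))=3$; the diameter values $3$, $7$, $4$ come from the same finite search. \textbf{The main obstacle} is the upper-bound half of each small-case assertion: the lower bounds and the type-$A_n$ radius follow cleanly from the building geometry, but pinning the diameter down to exactly $7$ in type $A_4$ (rather than some value between $\binom{4}{2}=6$ and the apartment diameter) genuinely requires the finite computation, since $|\ncp_5|$ is not covered by the apartments through a single chamber and one must locate a pair of chambers lying in no common apartment whose distance forces an extra step.
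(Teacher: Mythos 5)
Your proposal is correct and follows essentially the same route as the paper: the lower bound via the eccentricity $\binom{n}{2}$ of a chamber inside its containing apartment (Lemma~\ref{lem:rad-apartment} plus Corollary~\ref{cor:ncw_union_aptm}), the type~$A$ and $B_3$ upper bounds via a universal chamber and Proposition~\ref{prop:ncpn_union} resp.\ Theorem~\ref{thm:ncbn_ss}, and the diameter values by finite inspection. Your explicit use of the retraction $\rho_{A,C}$ to justify that distances within an apartment are not shortened in the ambient complex is a welcome point of extra care that the paper's proof leaves implicit.
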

	
	For type $A$ the statement of the theorem is \cite[Thm. 10.3]{ar}. 
	For the proof of our theorem we need the following lemma. 
	
	\begin{lemma}\label{lem:rad-apartment}
		For any vertex $v$ in the chamber graph $\Gamma=\Gamma_\Sigma$ of a Coxeter complex $\Sigma$ of type $A_k$ we have 
		\[
		\ecc(v)=\diam(\Gamma)=\rad(\Gamma)=\binom{k+1}{2}. 
		\] 
		The same is true for spherical buildings $\Delta$ of type $A_k$.
	\end{lemma}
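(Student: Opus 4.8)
The plan is to reduce everything to the combinatorics of the longest element of the Weyl group of type $A_k$, namely the symmetric group $S_{k+1}$. Recall that the chambers of the Coxeter complex $\Sigma$ of type $A_k$ are in bijection with the elements of $W\coloneqq S_{k+1}$, with $w$ and $w'$ joined by an edge in $\Gamma_\Sigma$ exactly when $w^{-1}w'$ is a simple reflection; thus $\Gamma_\Sigma$ is the Cayley graph of $(W,S)$ and the gallery distance is $d(w,w')=\ell_S(w^{-1}w')$, where $\ell_S$ denotes length with respect to the simple (Coxeter) generators. More generally, for a spherical building $\Delta$ of type $(W,S)$ with Weyl distance function $\delta$, the gallery distance between two chambers $C,D$ equals $\ell_S(\delta(C,D))$; see \cite{ab}. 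Since a Coxeter complex is the thin building of its type, it suffices to prove the assertion for buildings, treating $\Sigma$ as the special case $\Delta=\Sigma$ (where $\delta(C,D)=w^{-1}w'$ under the identification above).

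First I would record the two group-theoretic facts needed. In type $A_k$ the longest element $w_0\in S_{k+1}$ is the reversal permutation, characterized by $\ell_S(w)\leq\ell_S(w_0)$ for all $w\in W$, and its length equals its number of inversions, $\ell_S(w_0)=\binom{k+1}{2}$. Combining this with the formula $d(C,D)=\ell_S(\delta(C,D))$ gives, for all chambers $C,D$ of $\Delta$, the bound $d(C,D)\leq\binom{k+1}{2}$; hence $\ecc(C)\leq\binom{k+1}{2}$ for every chamber $C$, and a fortiori $\diam(\Gamma_\Delta)\leq\binom{k+1}{2}$.

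Next I would establish the matching lower bound for the eccentricity of an \emph{arbitrary} chamber $C$. By the building axioms $C$ lies in some apartment $A$, which is isomorphic, as a chamber complex equipped with its Weyl distance, to $\Sigma$; inside $A$ there is a chamber $D$ opposite to $C$, i.e.\ with $\delta(C,D)=w_0$. Since Weyl distances in $\Delta$ restrict correctly to apartments, $d(C,D)=\ell_S(w_0)=\binom{k+1}{2}$, so $\ecc(C)=\binom{k+1}{2}$ for \emph{every} chamber $C$. Taking the minimum and the maximum over all chambers yields $\rad(\Gamma_\Delta)=\diam(\Gamma_\Delta)=\binom{k+1}{2}$, which is the claim; for the Coxeter complex one simply sets $\Delta=\Sigma$. (For the thin case one may instead argue directly: left translation by $W$ acts vertex-transitively on $\Gamma_\Sigma$ by graph automorphisms, so all eccentricities coincide and equal $\diam(\Gamma_\Sigma)=\ell_S(w_0)$.)

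I do not expect a genuine obstacle: the argument assembles three standard inputs — the identity $d(C,D)=\ell_S(\delta(C,D))$, the existence of an apartment through any chamber (hence of a chamber opposite a given one), and the value $\ell_S(w_0)=\binom{k+1}{2}$ in type $A$. The only point deserving a sentence of care is that the lower bound must be produced at \emph{every} vertex rather than a single one; this is precisely what the apartment-through-$C$ argument (or, in the thin case, vertex-transitivity) supplies, and it is what forces the equality of radius, diameter and common eccentricity.
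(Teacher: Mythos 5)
Your proposal is correct and follows essentially the same route as the paper: reduce to the Coxeter complex, where every eccentricity equals $\ell_S(w_0)=\binom{k+1}{2}$, and then transfer to the building via apartments (the paper invokes the common-apartment axiom for the upper bound and leaves the existence of an opposite chamber implicit, which you spell out). No substantive difference in method, only in the level of detail.
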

	
	\begin{proof}
		First consider the case of a Coxeter complex. From standard Coxeter combinatorics it follows that the eccentricity of any vertex $v$ equals the length of the longest word in the corresponding Coxeter group $W \cong S_{k+1}$. Hence, $\ecc(v)=\binom{k+1}{2}$ for all $v$, which implies $\ecc(v)=\diam(\Gamma)=\rad(\Gamma)$ for all $v$. 
		To see that the statement on $\Delta$ is true, recall that any pair of chambers $C, D$ in $\Delta$ is contained in a common apartment isomorphic to $\Sigma$. 
	\end{proof}
	
	The following corollary is immediate. 
	
	\begin{cor}
		Denote by $\Gamma_A$ the subgraph of the chamber graph $\Gamma_\Delta$ that corresponds to an apartment $A$ in $\Delta$. Then 
		the ball of radius $\binom{n}{2}$ around a vertex $v$ in $H(W)$ contains all $\Gamma_A$, where $A$ is an apartment containing the chamber $C_v$ in $\Delta$ corresponding to $v$. 
	\end{cor}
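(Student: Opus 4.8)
The plan is to reduce the statement to Lemma~\ref{lem:rad-apartment} via the identifications set up in Section~\ref{sec:embedding}. Recall from the proof of Proposition~\ref{prop:isom_graphs} that $H(W)$ is isomorphic to the chamber graph $\Gamma_{\ocncw}$, and that $\ocncw$ sits inside the building $\Delta$ of type $A_{n-1}$ as a chamber subcomplex; under this identification $v$ corresponds to a chamber $C_v$ of $\ocncw$. Here $A$ is to be read as an apartment of $\ocncw$ in the sense of Definition~\ref{def:NCPapartments} containing $C_v$; such apartments exist by Corollary~\ref{cor:ncw_union_aptm}, are honest subcomplexes of $\ocncw$, and are isomorphic as abstract simplicial complexes to the Coxeter complex of type $A_{n-1}$, so that $\Gamma_A$ is a subgraph of $H(W)=\Gamma_{\ocncw}$.

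I would then argue in two short steps. First, since $A$ is a Coxeter complex of type $A_{n-1}$, Lemma~\ref{lem:rad-apartment} applied with $k=n-1$ yields $\ecc_{\Gamma_A}(C_v)=\binom{n}{2}$; thus for every chamber $D$ of $A$ there is a gallery in $A$ of length at most $\binom{n}{2}$ joining $C_v$ to $D$. Second, any such gallery lies entirely in $A\subseteq\ocncw$, hence is a gallery in $\ocncw$, so that $d_{H(W)}(v,D)=d_{\Gamma_{\ocncw}}(C_v,D)\leq\binom{n}{2}$. Therefore every vertex of $\Gamma_A$ lies in the ball of radius $\binom{n}{2}$ around $v$ in $H(W)$, which is exactly the assertion.

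The one place to be careful is that distances in $H(W)=\Gamma_{\ocncw}$ need not agree with distances in the ambient building $\Gamma_\Delta$, so one cannot simply invoke the building version of Lemma~\ref{lem:rad-apartment}; the point of using an apartment $A$ that is a genuine subcomplex of $\ocncw$ is precisely that a minimal gallery in $A$ realizing $d_{\Gamma_A}(C_v,D)$ is automatically admissible in $\ocncw$, so passing from $\Gamma_A$ to $\Gamma_{\ocncw}$ can only decrease distances. The only other thing to keep straight is the index shift: $\Delta$ has type $A_{n-1}$, so its apartments are Coxeter complexes of the same type, and the relevant value in Lemma~\ref{lem:rad-apartment} is $\binom{(n-1)+1}{2}=\binom{n}{2}$. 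With this in hand the corollary is immediate, and no further estimate is needed.
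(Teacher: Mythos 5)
Your argument is correct and is exactly the reasoning the paper intends: the corollary is stated as an immediate consequence of Lemma~\ref{lem:rad-apartment} with $k=n-1$, giving eccentricity $\binom{n}{2}$ within the apartment, and the observation that a gallery inside an apartment of $\ocncw$ is a gallery in $\ocncw$ itself, so distances in $H(W)$ can only be smaller. Your explicit care about reading $A$ as an apartment of $\ocncw$ (so that $\Gamma_A$ is genuinely a subgraph of $H(W)$) and about not confusing $d_{\Gamma_{\ocncw}}$ with $d_{\Gamma_\Delta}$ is a welcome clarification of a point the paper leaves implicit.
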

	
	Observe that the other direction is not true (not even in type $A$). In general, the ball of radius $\binom{n}{2}$ around a random vertex $v$ in $H(W)$ is {\bf not} contained in the union of all $\Gamma_A$, where $A$ is an apartment containing the chamber $C_v$ in $\Delta$ corresponding to $v$. In type $A$, it is however true for vertices corresponding to universal chambers by Proposition \ref{prop:ncpn_union}.

	\begin{proof}[Proof of Theorem~\ref{thm:radius}]
		By Proposition \ref{prop:isom_graphs} we can identify $\hw$ with the graph $\Gamma_{\ocncw}$. From Lemma ~\ref{lem:rad-apartment} (with $k=n-1$) we obtain that for every apartment $A$ in $\ocncw$ the radius of the subgraph $\Gamma_A$ is $\binom{n}{2}$. This implies that the eccentricity of a vertex $v$ in $\Gamma_A$ is $\binom{n}{2}$. Now $\ocncw$ is a union of apartments by Corollary \ref{cor:ncw_union_aptm}, so every vertex of $\Gamma_{\ocncw}$ is contained in a subgraph $\Gamma_A$ for some apartment $A$. Therefore, $\rad(\hw) \geq \binom{n}{2}$, as the radius is the minimal eccentricity.
		
		In type $A$ we know by Proposition \ref{prop:ncpn_union} 
		that there exists a universal chamber $B$, which has the property that every other chamber is contained in a common apartment with $B$. Therefore, we have equality for the radius in type $A$. With the same arguments and Theorem \ref{thm:ncbn_ss} equality follows for $B_3$. 
		
		The equalities for the diameter are by inspection. For $|\ncp_4|$ it can be verified in Figure \ref{fig:ncp4} that the eccentricity is $3$ for every chamber. In $|\ncp_5|$ the maximal eccentricity is for instance realized by the chambers corresponding to $(13)(45)(12)(35)$ and $(24)(15)(23)(14)$. In Figure \ref{fig:b3} one tests that the maximal eccentricity is for instance realized by the chambers corresponding to $\dka 1, -2 \dkz \dka 2,3 \dkz [1]$ and $[1]\dka 2,3\dkz \dka 1,-2 \dkz$.
	\end{proof}

	\subsection{The Hurwitz action in type A}
	
	In  this subsection we give a pictorial interpretation of the Hurwitz action and interpret  left- and right- shifts $\sigma$ and $\sigma'$ in terms of ``sliding edges'' in labeled spanning trees representing chambers. A similar pictorial interpretation can be given for the Hurwitz action in type $B_n$ using $B_n$-trees.
	
	Since the Hurwitz graph $\hw$ is isomorphic to the chamber graph of $\ocncw$ by Proposition \ref{prop:isom_graphs}, the vertices of $\hw$ correspond to chambers in $\ocncw$. From now on, we will use this correspondence frequently.
	
	Let $W\cong S_n$ and recall from Section~\ref{Sec:typeA} that the reflections in $W$ are the transpositions $(i,j)$ with $i\neq j\in\{1, \ldots, n\}$.  These transpositions can be represented by an edge connecting the vertices numbered $i$ and $j$ in a regular $n$-gon. 
	
	Lemma~\ref{lem:ch_aptm} implies that every chamber $C$ in $\ocncw$ comes with a preferred apartment $A_C$ which is given by a non-crossing spanning tree $T(C)$. We use the fact that $C$ is given by a minimal presentation $t_1\ldots t_n$ of the Coxeter element $c$ and define $T(C)$ to be the tree containing all edges $(j_i, k_i)$  corresponding to the transpositions $t_i$.  We may label the edge $(j_i, k_i)$ by  $i$.
	This labeling then determines $C$ uniquely. 
	
	\begin{lemma}\label{lem:no-common-vertex}
		Suppose $T$ is a non-crossing labeled spanning tree representing a chamber $C$ in $H(S_n)$. Two edges in $T$ do not intersect if and only if the corresponding transpositions commute.   
	\end{lemma}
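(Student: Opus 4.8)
The plan is to split the statement into one elementary algebraic fact about transpositions and one elementary geometric fact about chords of the regular $n$-gon, and then to combine them. On the algebraic side I would recall the standard fact that two distinct transpositions $(i,j)$ and $(k,l)$ in $S_n$ commute if and only if $\{i,j\}\cap\{k,l\}=\emptyset$: disjoint transpositions are disjoint cycles and hence commute, while if $\{i,j\}$ and $\{k,l\}$ overlap in exactly one index the two transpositions generate a copy of $S_3$ and their two products are the two distinct $3$-cycles on the three indices involved, so they do not commute. Given this, it suffices to prove the geometric statement: two distinct edges of $T$ meet, as closed segments in the plane, if and only if they share a common vertex of the $n$-gon.

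For the geometric statement I would first record that the vertices of the regular $n$-gon lie on a circle, so no three of them are collinear; consequently the chord spanned by any edge of $T$ contains no vertex of the $n$-gon other than its own two endpoints. Now let $e$ and $e'$ be distinct edges of $T$. Since $T$ is a tree it contains no repeated edge, so $e$ and $e'$ do not have the same endpoint pair and therefore share at most one vertex. If they share exactly one vertex they clearly meet. If they share no vertex, then by the defining property of a non-crossing tree their relative interiors are disjoint, and by the collinearity observation neither chord passes through an endpoint of the other, so $e\cap e'=\emptyset$. This establishes the dichotomy, and hence the lemma: ``$e$ and $e'$ do not meet'' is equivalent to ``$e$ and $e'$ have disjoint vertex sets'', which by the algebraic fact is equivalent to the corresponding transpositions commuting.

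I do not anticipate a genuine obstacle. The only step that needs a little care is ruling out that, in a non-crossing configuration, one edge might still touch another at a vertex of the polygon that is not one of its own endpoints; this is precisely where the regular-polygon hypothesis is used, through concyclicity of the vertices, together with the fact that a tree has no repeated edges. (The degenerate case $e=e'$ is excluded by reading ``two edges'' as ``two distinct edges'', as is standard here, since the lemma is applied to pairs of distinct edges of $T(C)$.)
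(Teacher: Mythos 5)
Your proposal is correct and follows essentially the same route as the paper: non-intersecting edges of the non-crossing tree have disjoint endpoint sets, which is exactly the condition for the corresponding transpositions to commute. The paper's proof is a two-line version of this; your additional care about concyclicity ruling out an edge passing through a third vertex is a harmless (and reasonable) elaboration of the same argument.
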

	\begin{proof}
		If the edges do not intersect in $T$, then they represent two transposition $(i,j)$ and $(k,l)$ with $i,j,k$ and $l$ all pairwise distinct. Hence they commute. Similarly obtain the converse. 
	\end{proof}
	
	The next proposition gives a characterization of the braid group action on the Hurwitz graph in terms of trees and could be taken as an alternative definition in type $A$. The second author discussed this idea first with Thomas Haettel and Dawid Kielak in 2014. 
	
	\begin{prop}\label{prop:slides}
		Suppose $T$ is a non-crossing labeled  spanning tree representing the chamber $C$ in $H(S_n)$ and let $t_i=\{j,k\}$ and $t_{i+1}=\{l,m\}$ be the edges labeled $i$ and $i+1$, respectively.  \newline Then a non-crossing labeled spanning tree $\sigma_i(T)$ representing $\sigma_i(C)$ can be obtained from $T$ as follows:
		\begin{itemize}
			\item If the edges labeled $i$ and $i+1$ do not share a vertex (i.e. j, k, l, m are pairwise different), swap their labeling and keep all other edges.  
			\item If the edges  labeled $i$ and $i+1$ do share a vertex, say $k=m$, then label $\{j,k\}$ with $i+1$ and replace the edge $\{l,m\}$ by the edge $\{l, j\}$ and label it with $i$.  Keep all other edges. 
		\end{itemize}
		A non-crossing labeled spanning tree $\sigma'_i(T)$ representing $\sigma'_i(C)$ can be obtained from $T$ as follows:
		\begin{itemize}
			\item If the edges labeled $i$ and $i+1$ do not share a vertex (i.e. j, k, l, m are pairwise different), swap their labeling and keep all other edges.  
			\item If the edges  labeled $i$ and $i+1$ do share a vertex, say $k=m$, then label $\{l,m\}$ with $i$ and replace the edge $\{j,k\}$ by the edge $\{j,l\}$ and label it with $i+1$.  Keep all other edges. 
		\end{itemize}
	\end{prop}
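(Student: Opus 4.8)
The plan is to argue directly from the definition of the preferred tree. Recall that $T(C)$ is the labeled tree whose $j$-th edge is the $j$-th letter of the $T$-reduced word $t_1\ldots t_n$ defining $C$ (via the bijection $g'$ of Lemma~\ref{lem:chambers}), read as an unordered pair, and that $T(\sigma_i(C))$ is built in the same way from the reduced word $\sigma_i(t_1\ldots t_n)$ defining $\sigma_i(C)$. By the explicit formula for $\sigma_i$ in Lemma~\ref{lem:shift}, this word agrees with $t_1\ldots t_n$ outside positions $i$ and $i+1$, which now hold $t_it_{i+1}t_i$ and $t_i$; likewise $\sigma'_i$ alters only positions $i,i+1$, placing $t_{i+1}$ and $t_{i+1}t_it_{i+1}$ there. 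Hence every edge whose label is not in $\{i,i+1\}$ is copied over unchanged, and the whole statement reduces to evaluating these two transpositions in each of the two cases.

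If the edges labeled $i$ and $i+1$ are disjoint, then $t_i$ and $t_{i+1}$ commute by Lemma~\ref{lem:no-common-vertex}, so $t_it_{i+1}t_i=t_{i+1}$ and $t_{i+1}t_it_{i+1}=t_i$; in both cases positions $i,i+1$ become $(t_{i+1},t_i)$, which is exactly ``swap the two labels and keep all edges''. If instead the two edges share a vertex, write them as $t_i=\{j,k\}$ and $t_{i+1}=\{l,k\}$ (i.e. with the shared vertex chosen as $k=m$); here $j\neq l$, since otherwise $t_i=t_{i+1}$, which a $T$-reduced word forbids by Lemma~\ref{lem:shift}. Using the conjugation identity $\pi(a,b)\pi^{-1}=(\pi(a),\pi(b))$ one computes $t_it_{i+1}t_i=(j,k)(l,k)(j,k)=\{l,j\}$ and $t_{i+1}t_it_{i+1}=(l,k)(j,k)(l,k)=\{j,l\}$. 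So after $\sigma_i$ the edges carrying labels $i$ and $i+1$ are $\{l,j\}$ and $\{j,k\}$, that is, the edge $\{j,k\}$ stays put but is relabeled $i+1$ while the old edge $\{l,m\}$ is replaced by $\{l,j\}$ with label $i$; and after $\sigma'_i$ they are $\{l,m\}$ and $\{j,l\}$, that is, $\{l,m\}$ is relabeled $i$ while $\{j,k\}$ is replaced by $\{j,l\}$ with label $i+1$. These are precisely the rules in the statement, and since two incident edges of a tree meet in exactly one vertex the two cases are exhaustive.

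Finally, one should check that the object produced really is a non-crossing labeled spanning tree, but this needs no work: $\sigma_i(C)$ and $\sigma'_i(C)$ are chambers of $\ocncw$ by Corollary~\ref{cor:adj_chambers}, and by Lemma~\ref{lem:ch_aptm}, together with the identification of apartments of $\ocncw$ with non-crossing spanning trees, every chamber's preferred apartment is such a tree. The only point that demands care in the argument above is the bookkeeping of which edge keeps, loses, or acquires each of the labels $i$ and $i+1$, along with the purely notational normalization placing the shared vertex at $k=m$; beyond that the proof is a one-line conjugation computation, so I do not expect a genuine obstacle.
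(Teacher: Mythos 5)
Your proof is correct and follows the same main line as the paper's: both reduce everything to the explicit form of $\sigma_i$ and $\sigma_i'$ from Lemma~\ref{lem:shift}, dispose of the disjoint case via Lemma~\ref{lem:no-common-vertex}, and in the incident case compute the conjugates $t_it_{i+1}t_i=(l,j)$ and $t_{i+1}t_it_{i+1}=(j,l)$ to read off which edge keeps, loses, or acquires each label. The one place you genuinely diverge is the verification that the output is still non-crossing: the paper argues geometrically that the triangle on $j$, $k=m$, $l$ is cut by no other edge of $T$, so any two of its sides again yield a non-crossing tree, whereas you observe that $\sigma_i(t_1\ldots t_n)$ is again a reduced word for $c$, so its letters span a non-crossing spanning tree by the apartment--tree correspondence. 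Your route is arguably the more robust one, since the paper's triangle claim is not a property of arbitrary non-crossing spanning trees containing $\{j,k\}$ and $\{l,k\}$ (an edge from $k$ to a vertex on the arc between $j$ and $l$ opposite $k$ would cross the new chord $\{j,l\}$ without crossing either old edge), so it implicitly relies on the same labeling constraint that your argument makes explicit. A minor further difference: you treat $\sigma_i'$ in full, while the paper proves only the $\sigma_i$ half and declares the rest analogous.
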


	\begin{proof}
		We only prove the assertion for $\sigma_i$. The rest follows using analogous arguments. 
		Lemma~\ref{lem:no-common-vertex} implies the first bullet point. To see the second, argue as follows. 
		From Lemma~\ref{lem:shift} we know that if two minimal presentations differ by a single shift, then the corresponding sequence of  transpositions differ in the $i$-th and $i+1$-st entry. 
		Moreover, we know that the $i+1$-st transposition in $\sigma_i(C)$ equals the $i$-th transposition in $C$, as $\sigma_i$ shifts the $i$-th transposition to the right. In terms of edges in the  tree $T$ this means that the edge $\{l,m\}$ stays and obtains a new label $i+1$ in $\sigma_i(T)$. 
		The transposition at index $i$ in $C$ is replaced by the $t_i$-conjugate of $t_{i+1}$. Recall that $t_i$ swaps $j$ and $k$ while $t_{i+1}$ swaps $l$ and $k=m$. Hence $t_it_{i+1}t_i$ swaps $j$ and $l$. Hence the tree $\sigma_i(T)$ contains an edge $\{j,l\}$ labeled $i$. 
		Finally, to see that the tree $\sigma_i(T)$ is non-crossing observe that the convex hull in the regular polygon on the three vertices $j, k=m, l$ is a triangle that is not cut by any other edge of $T$. Compare Figure~\ref{fig:spanning3}. Hence, we may choose any two of the bounding edges and still obtain a non-crossing tree.   
	\end{proof}
	
	\begin{figure}[h]%
		\begin{center}
			\def\svgwidth{9cm}
			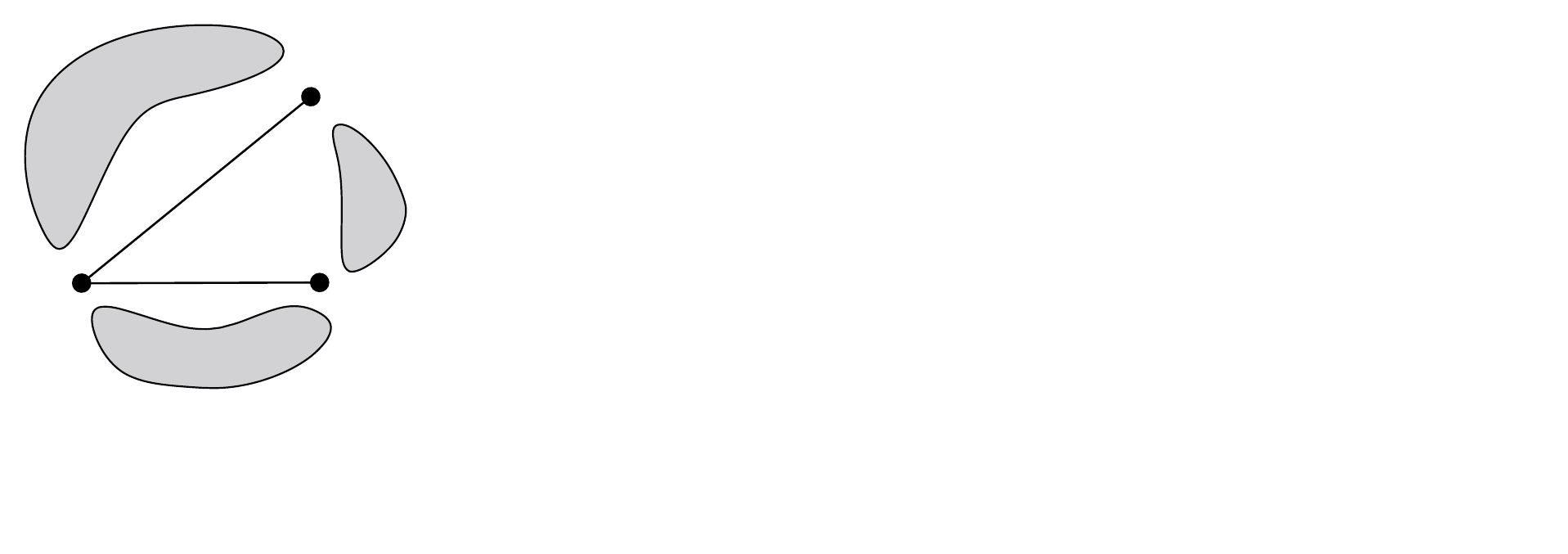
			\caption{This illustrates the orbit of  $\sigma_i$ in case the $i$-th and $i+1$-st transposition of a maximal chain do not commute. The gray shaded regions represent the remaining vertices which are connected to $j$, $k$ and $l$ in such a way that none of the edges crosses the triangle spanned by $j$, $k$ and $l$. }
			\label{fig:spanning3}%
		\end{center}
	\end{figure}
	
	Let us illustrate the assertion of Proposition~\ref{prop:slides} with an example. 
	
	\begin{example}\label{ex:slides}
		Suppose we would like to apply $\sigma_i$ to a given non-crossing labeled spanning tree in case $n=5$.  Figure~\ref{fig:sigmas} shows such an example. In the pictures we omit all the labels of the vertices as well as the labels of the edges which do not affect the application of $\sigma_i$.    
		
		The situation shown is the one where the edges labeled $i$ and $i+1$ do intersect in a vertex $k$. In this case the new tree $\sigma_i(T)$ is obtained  from the old tree by first swapping the  labels $i$ and $i+1$ and then ``sliding'' the edge labeled $i$ along the edge labeled $i+1$ so that they now share the other vertex of the edge (newly named) $i+1$. Such a slide move corresponds to the conjugation of the edge labeled $i+1$ by the edge labeled $i$. 
		
		The image $\sigma'_i$ is the inverse of the map $\sigma_i$. Here we first swap labels of the edges and then slide edge number $i+1$ along edge $i$, which ``undoes'' what $\sigma_i $ changed in the labeled spanning tree. 
	\end{example}
	
	\begin{figure}%
		\begin{center}
			\def\svgwidth{\textwidth}
			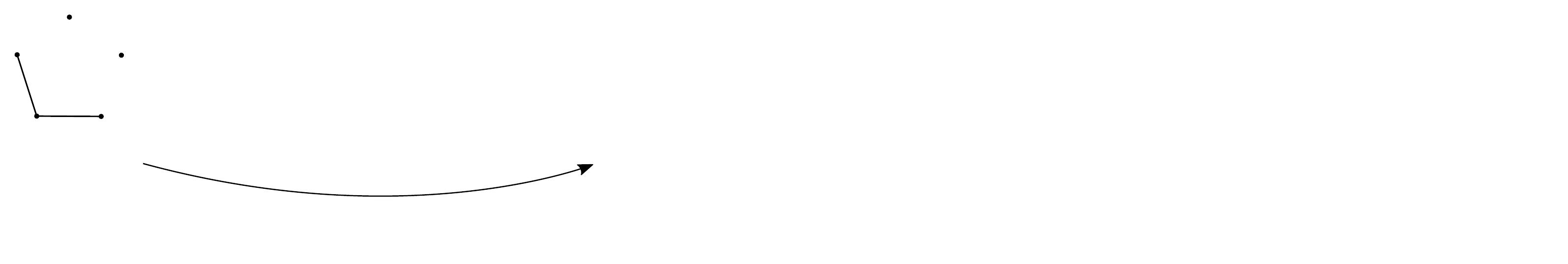
			\caption{This figure illustrates how trees characterizing vertices in $H(S_5)$ change under an application of $\sigma_i$ and $\sigma'_i$. For details see Example~\ref{ex:slides}.}%
			\label{fig:sigmas}%
		\end{center}
	\end{figure}
	
	Using the pictorial interpretation of the $\sigma_i$ given in Proposition~\ref{prop:slides} and  a characterization of all the non-crossing labeled spanning trees that correspond to a vertex in $H(S_n)$ given in Theorem 2.2 of \cite{gy} (see also \cite[Prop. 3.5.]{ar}), one can give  (yet another) proof of the fact that the Hurwitz graph is connected. 
	As it very much depends on the pictorial interpretation of type $A$, we only sketch how to proceed. 
	
	\begin{thm}
		The Hurwitz graph $H(S_n)$ is connected. 
	\end{thm}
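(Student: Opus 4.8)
The plan is to transfer the problem to the combinatorial model of non-crossing labeled spanning trees and then argue by induction on $n$. By Proposition~\ref{prop:isom_graphs} together with the lattice isomorphism $\nc(S_n)\cong\ncp_n$, the graph $H(S_n)$ is the chamber graph of $|\ncp_n|$: its vertices are the non-crossing labeled spanning trees $T(C)$ on the regular $n$-gon arising from chambers $C$, and, since every rank two interval of $\nc(S_n)$ is either Boolean of rank two or the lattice $M_3$, the shift $\sigma_i$ acts transitively on the maximal chains of such an interval, so the edges of $H(S_n)$ are exactly the moves described in Proposition~\ref{prop:slides} — the label swaps $i\leftrightarrow i+1$ when the edges labeled $i$ and $i+1$ are disjoint, and the two slide moves when they share a vertex. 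Thus it suffices to prove that every such tree can be carried, by a finite sequence of these moves, to the single reference tree $T_0$, namely the path $1-2-\cdots-n$ with edge $\{i,i+1\}$ labeled $i$; this $T_0$ is legitimate because it represents the reduced factorization $c=(1\,2)(2\,3)\cdots(n-1\,n)$.

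The induction on $n$ is trivial for $n\le 2$. For the step, suppose first that $T$ has the special form in which its edge labeled $n-1$ is $\{n-1,n\}$. Writing $c=t_1\cdots t_{n-1}$ with $t_{n-1}=(n-1\,n)$ gives $t_1\cdots t_{n-2}=c\,(n-1\,n)=(1\,2\,\ldots\,n-1)$, which fixes the point $n$; by Observation~\ref{obs:basis} the roots $\alpha_{t_1},\ldots,\alpha_{t_{n-2}}$ form a basis of $\mov((1\,2\,\ldots\,n-1))$, which involves only the coordinates $\varepsilon_1,\ldots,\varepsilon_{n-1}$, so none of $t_1,\ldots,t_{n-2}$ meets the vertex $n$ and $n$ is a leaf of $T$. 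Deleting $n$ and its edge yields a non-crossing labeled spanning tree on the $(n-1)$-gon $\{1,\ldots,n-1\}$ representing a vertex of $H(S_{n-1})$, whose Coxeter element is $(1\,2\,\ldots\,n-1)$; the moves $\sigma_1,\ldots,\sigma_{n-3}$ of the large problem restrict to exactly the moves of this smaller one and never disturb the edge labeled $n-1$. The induction hypothesis then carries the restricted tree to the path $1-2-\cdots-(n-1)$, i.e.\ it carries $T$ to $T_0$.

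It remains to prove the key claim: every non-crossing labeled spanning tree representing a vertex of $H(S_n)$ can be moved, by $\sigma_i^{\pm1}$ steps, into the special form above. The essential input here is the explicit description of which labelings occur, Theorem~2.2 of \cite{gy} (equivalently \cite[Prop.~3.5]{ar}): a non-crossing spanning tree with a bijective edge-labeling represents a vertex of $H(S_n)$ precisely when the labels of the edges around each vertex obey a certain local monotonicity condition. Using this one shows that the edge carrying the largest label $n-1$ can be slid to any chosen edge of an appropriate spanning tree: if it meets the edge labeled $n-2$ it is slid along it by $\sigma_{n-2}^{\pm1}$; if it does not, one first brings the edge labeled $n-2$ adjacent to it by moves among the edges labeled $1,\ldots,n-2$, which the monotonicity condition guarantees to be available. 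Iterating routes the last edge to $\{n-1,n\}$, and a final adjustment makes $n$ a leaf, producing the special form. Making this routing precise — simultaneously keeping control of planarity, of the tree structure, and of the monotonicity condition — is the main obstacle, and this is why only a sketch is given.

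Finally, I would remark that connectivity can also be obtained without the pictorial apparatus: by Proposition~\ref{prop:ncpn_union} the complex $|\ncp_n|$ is the union of all apartments containing a fixed universal chamber $D$, so every chamber shares an apartment $A$ with $D$; as the chamber graph of the Coxeter complex $A$ is connected, each vertex of $H(S_n)$ is joined to the vertex of $D$, whence $H(S_n)$ is connected.
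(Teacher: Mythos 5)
Your proposal is correct in outline and comparable in rigor to the paper's own argument, which is also only a sketch, but you organize the reduction quite differently. The paper normalizes an arbitrary non-crossing labeled spanning tree in three stages: first push every edge to the boundary of the polygon by slide moves, then rotate the resulting path of outer edges until the gap sits at $\{n-1,1\}$ (Figure~\ref{fig:gap_change}), and finally sort the labels (Figure~\ref{fig:label_change}). You instead induct on $n$: once the edge labeled $n-1$ is $\{n-1,n\}$, your moved-space argument via Observation~\ref{obs:basis} correctly shows that $n$ is a leaf, that deleting it yields a vertex of $H(S_{n-1})$ for the Coxeter element $(1\,2\,\ldots\,n-1)$, and that $\sigma_1,\ldots,\sigma_{n-3}$ restrict to the smaller problem without touching the last letter --- a clean reduction the paper does not use (modulo the usual care about which of $n-1$ or $n$ is fixed by $c\,(n-1\,n)$ under your composition convention). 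Both approaches leave the same kind of combinatorial routing unproven (the paper: that slides can always push an inner edge outward; you: that the edge labeled $n-1$ can always be routed to $\{n-1,n\}$), and both appeal to the characterization of admissible labelings in Theorem~2.2 of \cite{gy}, so neither is more complete than the other. Your closing remark is worth highlighting: the argument via Proposition~\ref{prop:ncpn_union} --- every chamber shares an apartment with a universal chamber $D$, the chamber graph of an apartment is connected, and galleries in a chamber subcomplex are galleries in $|\ncp_n| \cong$ the vertex set of $H(S_n)$ by Proposition~\ref{prop:isom_graphs} --- is actually a complete proof, and it does not circularly invoke shellability or Hurwitz transitivity since Proposition~\ref{prop:ncpn_union} is proved by an independent induction on trees; it is arguably a cleaner route to the theorem than either sketch, at the price of abandoning the explicit slide-move algorithm that is the point of this subsection.
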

	
	\begin{proof}[Sketch of proof.]
		Let $\{1,\ldots, n\}$ be the vertices of a regular polygon enumerated clockwise. The following non-crossing labeled spanning tree $T_0$ represents a universal chamber $v_0$ in the set of vertices of the Hurwitz graph: Let $T_0$ be the tree containing all outer edges $\{i,i+1\}$ of the polygon for $i=1, \ldots, n-1$ where the edge $\{i,i+1\}$ is labeled with $i$.
		
		We will argue that any tree $T$ representing a vertex of $H(S_n)$ can be deformed into $T_0$ by a sequence of $s_i$'s. This then gives us edge-paths in $H(S_n)$ connecting every vertex $v$ with $v_0$. Following such a path backwards using  $\sigma'_i$ applications in place of $\sigma_i$, we obtain connectivity. 
		
		We will write $e_i$ for the edge labeled $i$ in any given tree. First observe that one can deform any tree into a tree containing only outer edges by applying $\sigma_i$'s. The reason is the following: any such labeled spanning tree contains at least one outer edge $e_i$  and suppose that $e_{i+1}$ shares a vertex with $e_i$ and is not an outer edge. Then we can use (repeated) applications of $\sigma_i$'s and $\sigma'_i$'s to deform the tree such that we move the edge labeled $i+1$ previously to an outer edge. Compare Figure~\ref{fig:sigmas}.

		In case $e_{i+1}$ does not share a vertex with $e_i$, we can swap labels of non-intersecting  edges in order to obtain a situation for which we have intersecting edges that share consecutive labels. Note that this step requires $n\geq 5$. This is not a big restriction as the case $n=4$ can be checked by hand.

		Suppose we have moved all edges so that the spanning tree only contains outer edges. It remains to see that we can move the edges around the circle so that the missing edge is $\{n-1, 1\}$. 
		Compare Figure~\ref{fig:gap_change} for how to do this. 
		
		Finally, we can rearrange the labels on the edges so that we arrive in the tree $T_0$ representing the universal chamber $v_0$. Again as $n>5$, we have enough room to swap even labels of adjacent edges. This step is shown in Figure~\ref{fig:label_change}. 
	\end{proof}
	
	\begin{figure}[h]
		\begin{center}
			\def\svgwidth{9cm}
			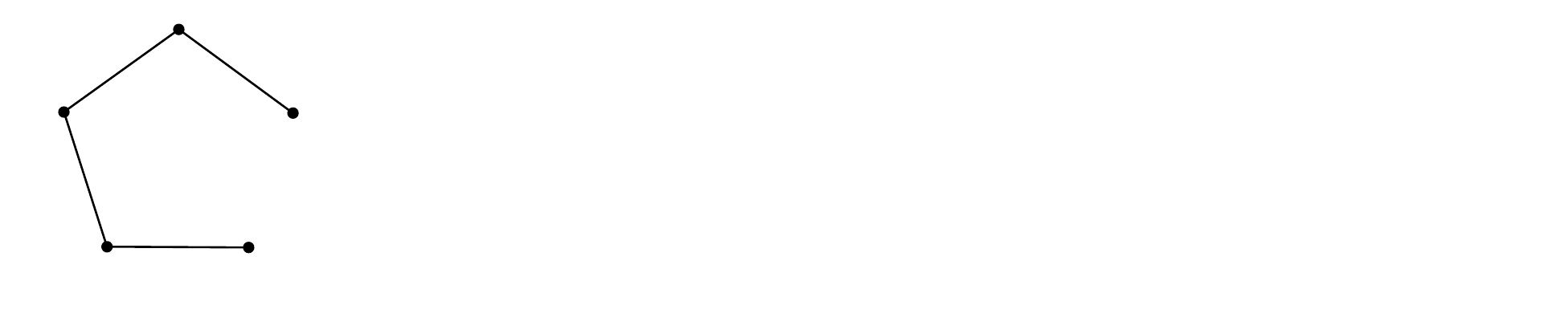
			\caption{Rearranging the outer edges in a labeled spanning tree using slide moves.}%
			\label{fig:gap_change}%
		\end{center}
	\end{figure}
	
	\begin{figure}
		\begin{center}
			\def\svgwidth{9cm}
			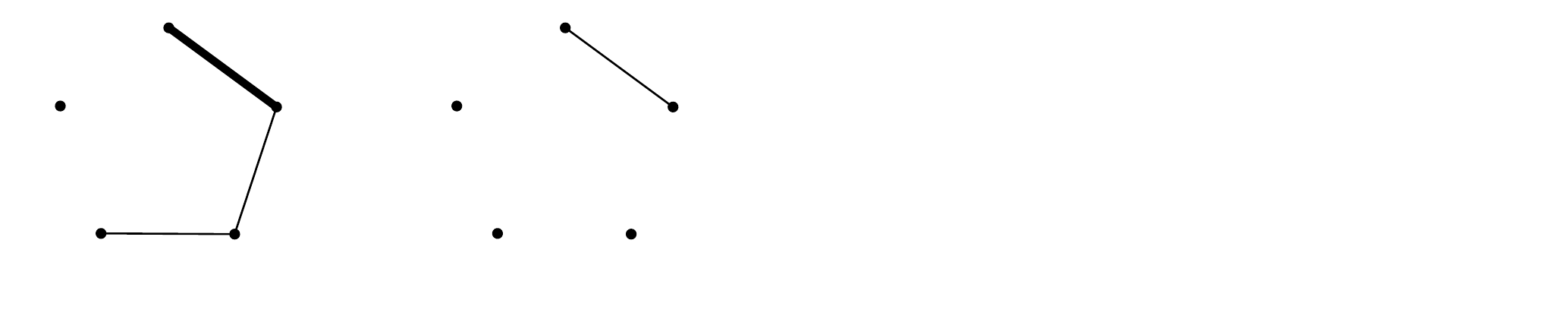
			\caption{Swapping the labels of the adjacent edges $i_1$ and $i_2$.}%
			\label{fig:label_change}%
		\end{center}
	\end{figure}

	The property of the shifts $\sigma_i$ stated in the following corollary was already observed in \cite{ar} and can easily be obtained directly from the definition of $\sigma_i$. We include it here as one can also prove it using the labeled-tree slide-move characterization of the $\sigma_i$ as well.  
	\begin{cor}\label{cor:order}
		If the edges labeled $i$ and $i+1$ in a tree $T$ do not share a vertex, $\sigma_i^2(T)=T$ and if they do share a vertex, then $\sigma_i^3(T)=T$.
	\end{cor}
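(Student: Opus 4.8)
The plan is to read both identities off the pictorial description of $\sigma_i$ in Proposition~\ref{prop:slides}, treating the two cases that already occur there. Suppose first that the edges $e_i$ and $e_{i+1}$ labeled $i$ and $i+1$ in $T$ do not share a vertex. By the first bullet of Proposition~\ref{prop:slides}, $\sigma_i$ keeps every edge of $T$ in place and only interchanges the labels $i$ and $i+1$. In the resulting tree the two edges carrying these labels are again $e_i$ and $e_{i+1}$, which still do not share a vertex, so a second application of $\sigma_i$ swaps the labels back; hence $\sigma_i^2(T)=T$.

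For the second case I would write $e_i=\{j,k\}$ and $e_{i+1}=\{l,k\}$ with $j,k,l$ pairwise distinct, so $k$ is the shared vertex. By the argument at the end of the proof of Proposition~\ref{prop:slides}, the triangle spanned by $j,k,l$ in the regular polygon is not crossed by any other edge of $T$; therefore, in the computation below only the two distinguished edges ever move, all other edges of $T$ stay put, and every intermediate graph is again a non-crossing labeled spanning tree. Applying the second bullet of Proposition~\ref{prop:slides} three times and recording the ordered pair $(\text{edge labeled }i,\ \text{edge labeled }i+1)$, one gets
\[
(\{j,k\},\{l,k\})\ \mapsto\ (\{l,j\},\{j,k\})\ \mapsto\ (\{k,l\},\{l,j\})\ \mapsto\ (\{j,k\},\{k,l\}),
\]
where each arrow is one application of $\sigma_i$: the edge now labeled $i+1$ is always the previous edge labeled $i$, kept in place and relabeled, while the edge now labeled $i$ is the previous edge labeled $i+1$ with its endpoint at the shared vertex slid along the previous edge labeled $i$ (the conjugation $t_i\mapsto t_it_{i+1}t_i$ in cycle notation). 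After three steps the pair of distinguished edges is $\{\{j,k\},\{k,l\}\}=\{e_i,e_{i+1}\}$ again and carries the labels $i,i+1$ exactly as in $T$; since nothing else was touched, $\sigma_i^3(T)=T$.

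As a cross-check I would also note that both claims drop out of the group-theoretic definition: $\sigma_i$ replaces the pair $(t_i,t_{i+1})$ by $(t_it_{i+1}t_i,\,t_i)$, hence $\sigma_i^k$ replaces it by $\bigl((t_it_{i+1})^k t_i,\,(t_it_{i+1})^{k-1}t_i\bigr)$, and this equals $(t_i,t_{i+1})$ exactly when $(t_it_{i+1})^k=\id$; in type $A$ the product of the two transpositions $t_i,t_{i+1}$ has order $2$ when the corresponding edges are disjoint and order $3$ when they meet in a point, which yields $k=2$ and $k=3$ respectively. I expect the only delicate point to be the bookkeeping in the second case: at each of the three slide moves one must correctly identify which edge is the ``resident'' one (keeping its place, only changing its label) and which is the ``sliding'' one, so that after three moves the underlying edge set and the labeling are restored simultaneously.
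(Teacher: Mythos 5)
Your proof is correct and follows essentially the same route as the paper: the paper's (very terse) argument also observes that $\sigma_i$ cycles through the three spanning subtrees of the triangle on $j,k,l$, which all remain non-crossing since no other edge of $T$ crosses that triangle, giving order $3$ in the shared-vertex case and order $2$ in the disjoint case. Your explicit three-step orbit computation and the algebraic cross-check via $(t_it_{i+1})^k=\id$ just make the same argument more self-contained.
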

	\begin{proof}
		We use notation as in the proof of Proposition \ref{prop:slides}.  If they do share a vertex of $T$, say vertex $k$, then $\sigma_i$ is transitive on all spanning subtrees on the three vertices $j,k,l$ of these two edges. As no other edge of $T$ crosses the convex hull of $j,k,l$ the resulting trees are all non-crossing. Compare Figure~\ref{fig:spanning3}.  
	\end{proof}
	
	\appendix
	
	\section{Spherical Buildings of type $A_n$}\label{cha:buildings}
	
	In this appendix we collect some basic facts about spherical buildings. We do not go much into the details but refer the reader to Brown's book \cite{bro} or to the book of Abramenko and Brown \cite{ab} instead. 
	
	\begin{definition}
		A simplicial complex $\Delta$ is a \emph{building} if there exists a collection $\A(\Delta)$ of subcomplexes of $\Delta$, called \emph{apartments}, such that the following axioms are satisfied. 
		\begin{itemize}
			\item[(B0)]
			Each apartment is a Coxeter complex.
			\item[(B1)]\label{ax:b2}
			For any two simplices in $\Delta$ there is an apartment containing both.
			\item[(B2)]
			If two simplices $a, b \in \Delta$ are both contained in the apartments $A$ and $A'$, then there is an isomorphism $A \to A'$ fixing $a$ and $b$ pointwise.
		\end{itemize}
		The maximal simplices in $\Delta$ are called \emph{chambers}. 
	\end{definition}
	
	It follows from (B2) that any two apartments are isomorphic. This allows us to define the \emph{type} of a building, which is the type of the Coxeter group associated to (one of) its apartments. A building is called \emph{spherical}, if the apartments are spherical Coxeter complexes, i.e. arise from a finite Coxeter group. An example of a building is given in Figure~\ref{fig:F2-building}.%
	
	Coxeter Complexes are \emph{chamber complexes}, i.e. all maximal simplices have the same dimension and any two of them can be connected by \emph{gallery}. A gallery in $\Delta$ is a sequence of chambers $(C_1, \ldots, C_n)$ such that $C_i$ and $C_{i+1}$ are either equal or share a codimension one face. In particular, chamber complexes are connected. Axioms (B2) and (B3) imply that buildings are chamber complexes as well. The set of chambers of a building $\Delta$ is denoted by $\chambers(\Delta)$.
	
	A subcomplex $Y$ of a chamber complex $X$ is called \emph{chamber subcomplex}, if it is a chamber complex and $\dim(X)=\dim(Y)$. Hence, apartments are chamber subcomplexes of the building.
	
	\begin{figure}[h]%
		\begin{center}
			\def\svgwidth{9.4cm}
			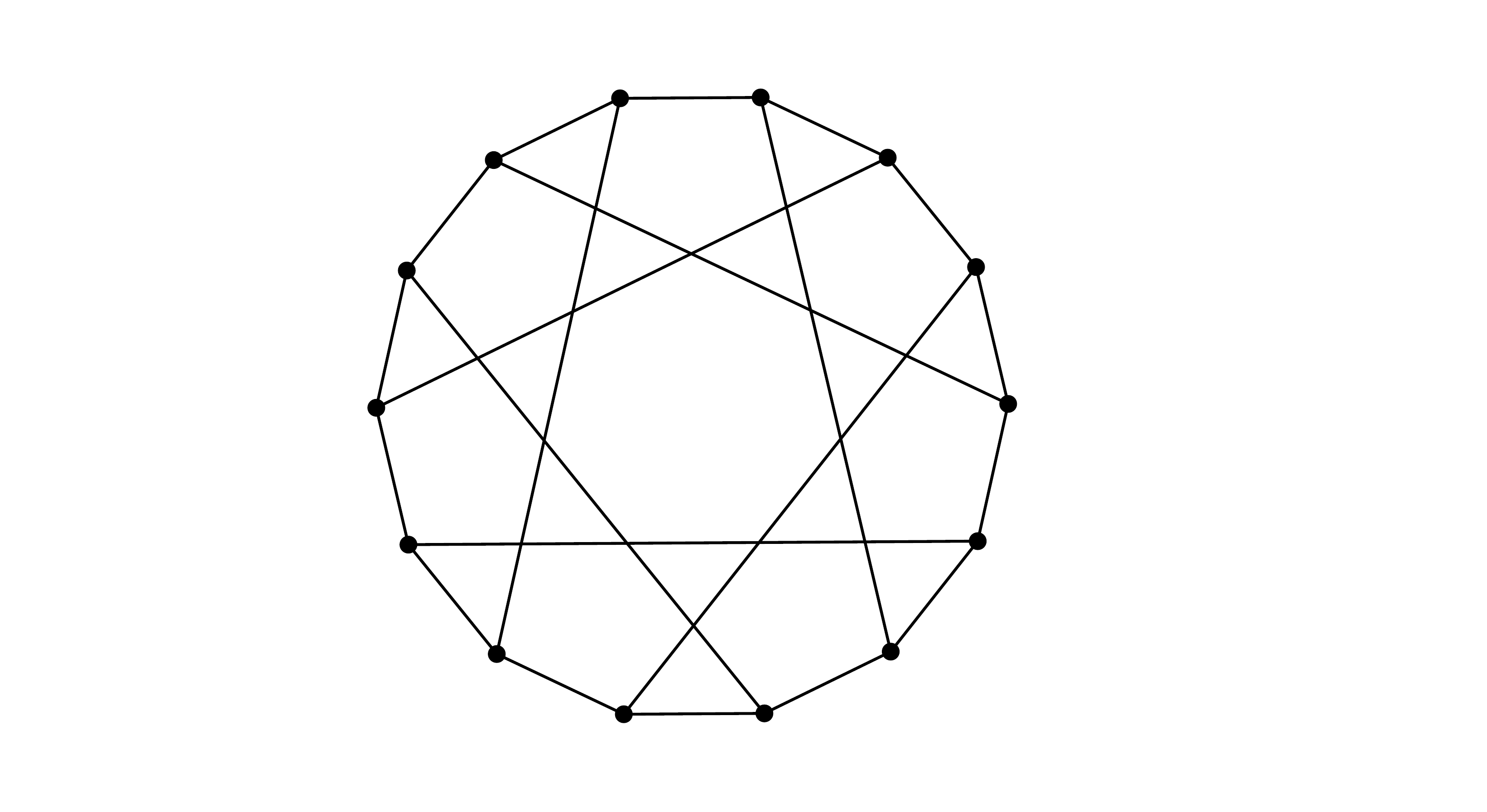
			\caption{The spherical building $|\L(\F_2^3)|$ of type $A_2$. The vertices are labeled by the linear subspaces they represent. The vector $e_i$ is the $i$-th unit vector. }%
			\label{fig:F2-building}%
		\end{center}
	\end{figure}
	
	The next corollary is an immediate consequence of the axiom (B1).
	\begin{cor}\label{cor:building_union}
		Let $C$ be an arbitrary chamber of a building $\Delta$. Then $\Delta$ is the union of all apartments containing $C$.
	\end{cor}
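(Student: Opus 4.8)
The plan is to deduce the statement directly from axiom (B1), exactly as the remark preceding it announces. The only thing to check is that ``$\Delta$ is the union of all apartments containing $C$'' is the same as saying that every simplex of $\Delta$ lies in at least one apartment that also contains $C$.

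First I would fix an arbitrary simplex $\sigma \in \Delta$. Since $C$ is a chamber, it is in particular a simplex of $\Delta$, so I may apply axiom (B1) to the pair of simplices $\sigma$ and $C$: there exists an apartment $A \in \A(\Delta)$ with $\sigma \in A$ and $C \in A$. In particular $\sigma$ is contained in an apartment that contains $C$, hence $\sigma$ lies in the union $\bigcup \{A \in \A(\Delta) : C \in A\}$. As $\sigma$ was arbitrary, this shows $\Delta \subseteq \bigcup \{A : C \in A\}$; the reverse inclusion is trivial since each apartment is a subcomplex of $\Delta$. Therefore $\Delta = \bigcup \{A \in \A(\Delta) : C \in A\}$, which is the claim.

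There is essentially no obstacle here: the statement is a one-line consequence of (B1), and the proof requires no case analysis, no induction, and no appeal to the Coxeter-complex structure of apartments or to (B0)/(B2). The only mild subtlety worth a word is that one uses that a chamber is a simplex, so that (B1) applies to it together with any other simplex; no separate argument is needed to handle higher-dimensional simplices versus vertices, since (B1) is stated for arbitrary simplices.
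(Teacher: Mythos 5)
Your proof is correct and is exactly the argument the paper intends: the paper simply remarks that the corollary is an immediate consequence of axiom (B1), and your write-up fills in that one-line deduction in the obvious way. No issues.
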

	
	The following construction is a fundamental example of a spherical building of type $A_{n-1}$. For more details and a proof of Proposition~\ref{prop:LV_building} see \cite[chap. 4.3]{ab}.
	
	Let $V$ be a vector space of dimension $n\geq 2$ and $\L(V)$ the lattice of linear subspaces of $V$. Recall that $\L(V)$ is ordered by inclusion and ranked by dimension. The meet is given by intersection and the join by the sum of subspaces. The minimal element is $\{0\}\subseteq V$ and the maximal element is $V$.
	
	For a lattice $L$, the order complex $|L|$ is the abstract  simplical complex with vertices $L\!\setminus\!\{0,1\}$ and the simplices are the chains in $L$. 
	
	\begin{prop}\label{prop:LV_building}
		For any vector space $V$ of dimension $n\geq 2$  the order complex  $|\L(V)|$ of the lattice of linear subspaces of $V$ is a spherical building of type $A_{n-1}$.
	\end{prop}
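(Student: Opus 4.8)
The plan is to exhibit the apartment system explicitly and then verify the building axioms (B0), (B1) and (B2). Call a set $\mathcal{F}=\{L_1,\dots,L_n\}$ of lines in $V$ a \emph{frame} if $V=L_1\oplus\cdots\oplus L_n$, and let $\Sigma_{\mathcal{F}}\subseteq|\L(V)|$ be the order complex of the subposet $P_{\mathcal{F}}=\{\bigoplus_{i\in I}L_i:I\subseteq\{1,\dots,n\}\}$ of $\L(V)$. I would take the apartment system to be $\A=\{\Sigma_{\mathcal{F}}:\mathcal{F}\text{ a frame}\}$. Before checking the axioms, observe that a maximal chain in $\L(V)\setminus\{0,V\}$ consists of subspaces of dimensions $1,2,\dots,n-1$, so $\dim|\L(V)|=n-2$, which is the dimension of a Coxeter complex of type $A_{n-1}$.

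\emph{Axiom (B0).} The map $\bigoplus_{i\in I}L_i\mapsto I$ is a poset isomorphism $P_{\mathcal{F}}\to\B_n$, so $\Sigma_{\mathcal{F}}$ is the order complex of $\B_n\setminus\{\hat 0,\hat 1\}$, that is, the barycentric subdivision of the boundary of an $(n-1)$-simplex. This is well known to be the Coxeter complex of type $A_{n-1}$, with $S_n$ acting by permuting the $L_i$; since $S_n$ is finite it is a spherical Coxeter complex, so once the remaining axioms hold $|\L(V)|$ is spherical of type $A_{n-1}$.

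\emph{Axiom (B1).} A simplex of $|\L(V)|$ is a chain of proper nonzero subspaces, and any such chain refines to a complete flag; so given two simplices $a,b$ I would refine $a$ to a complete flag $F\colon 0=U_0\subset\cdots\subset U_n=V$ and $b$ to a complete flag $F'\colon 0=U'_0\subset\cdots\subset U'_n=V$. It then suffices to find a basis $e_1,\dots,e_n$ of $V$ such that every $U_i$ and every $U'_j$ is the span of a subset of $\{e_1,\dots,e_n\}$: the frame $\mathcal{F}=\{\langle e_1\rangle,\dots,\langle e_n\rangle\}$ then has $a,b\subseteq\Sigma_{\mathcal{F}}$. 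I would prove the existence of such a common adapted basis by induction on $n$. Let $k$ be the largest index with $U_k\subseteq U'_{n-1}$; then $U_{k+1}+U'_{n-1}=V$ and $U_{k+1}\cap U'_{n-1}=U_k$, so any $v\in U_{k+1}\setminus U_k$ lies outside the hyperplane $W=U'_{n-1}$. The flags induced by $F$ and $F'$ on $W$, namely $(U_i\cap W)_i$ (after deleting the one repetition) and $(U'_j)_{j\le n-1}$, are complete flags of $W$, so by induction there is a basis $e_1,\dots,e_{n-1}$ of $W$ adapted to both; setting $e_n=v$ and using $U_i=(U_i\cap W)\oplus\langle v\rangle$ for $i>k$, one checks that $e_1,\dots,e_n$ is adapted to $F$ and $F'$. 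This inductive lemma is the only step with genuine content, and I expect it to be the main obstacle.

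\emph{Axiom (B2).} If $a$ and $b$ lie in both $\Sigma_{\mathcal{F}}$ and $\Sigma_{\mathcal{F}'}$, then all their vertices lie in $P_{\mathcal{F}}\cap P_{\mathcal{F}'}$, so it is enough to produce an isomorphism $\Sigma_{\mathcal{F}}\to\Sigma_{\mathcal{F}'}$ fixing $\Sigma_{\mathcal{F}}\cap\Sigma_{\mathcal{F}'}$ pointwise. Using (B1) I would first enlarge $a\cup b$ to a common chamber $C$, i.e.\ a complete flag $0=U_0\subset\cdots\subset U_n=V$ lying in both apartments. Such a flag orders each frame, $U_i=L_{\pi(1)}\oplus\cdots\oplus L_{\pi(i)}=L'_{\pi'(1)}\oplus\cdots\oplus L'_{\pi'(i)}$, and $\phi\colon\bigoplus_{i\in I}L_{\pi(i)}\mapsto\bigoplus_{i\in I}L'_{\pi'(i)}$ is a simplicial isomorphism $\Sigma_{\mathcal{F}}\to\Sigma_{\mathcal{F}'}$ fixing $C$. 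To see it fixes the whole intersection pointwise, take $X\in P_{\mathcal{F}}\cap P_{\mathcal{F}'}$, say $X=\bigoplus_{i\in I}L_{\pi(i)}=\bigoplus_{j\in J}L'_{\pi'(j)}$; the numbers $\dim(X\cap U_m)=\#\{i\in I:i\le m\}=\#\{j\in J:j\le m\}$ determine $I$ and $J$, hence $I=J$ and $\phi(X)=X$. This completes the verification; everything beyond the adapted-basis lemma is bookkeeping of the kind carried out in detail in \cite[Ch.~4.3]{ab}, whose treatment I would follow.
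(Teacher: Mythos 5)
The paper does not actually prove this proposition: it defers entirely to \cite[Chap.~4.3]{ab}. So you are supplying an argument where the paper supplies none, and most of it is the standard one. Your apartment system, the verification of (B0), and the adapted-basis lemma behind (B1) are all correct; the inductive step via the hyperplane $W=U'_{n-1}$ is exactly the usual Jordan--H\"older-type argument, and your dimension count and the counting argument with $\dim(X\cap U_m)$ at the end are fine.

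The gap is in (B2). You propose to ``enlarge $a\cup b$ to a common chamber $C$ lying in both apartments.'' This fails for two reasons. First, $a\cup b$ need not be a chain, so no chamber contains both simplices in general. Second, and more seriously, two apartments containing common simplices $a$ and $b$ need not contain any common chamber, and (B1) does not produce one: take $V=\F_3^3$ with frames $\{\langle e_1\rangle,\langle e_2\rangle,\langle e_3\rangle\}$ and $\{\langle e_1\rangle,\langle e_2+e_3\rangle,\langle e_2-e_3\rangle\}$. These two apartments meet exactly in the two non-incident vertices $\langle e_1\rangle$ and $\langle e_2,e_3\rangle=\langle e_2+e_3\rangle\oplus\langle e_2-e_3\rangle$, so for $a=\{\langle e_1\rangle\}$ and $b=\{\langle e_2,e_3\rangle\}$ axiom (B2) must be checked, yet there is no common chamber to run your construction on. What you actually prove is the variant (B2$'$): two apartments sharing a common \emph{chamber} are isomorphic by a map fixing their intersection pointwise. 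That does suffice, but only after invoking the standard (and not completely trivial) fact that (B0), (B1) and (B2$'$) together imply (B2), proved in \cite{ab}; you should either cite that equivalence explicitly or treat arbitrary $a,b$ directly, e.g.\ by choosing orderings of the two frames compatible with the (possibly incomparable) vertex sets of $a$ and $b$ and repeating your $\dim(X\cap U_m)$ argument relative to those. As written, the (B2) step does not go through.
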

	
	We now want to describe the apartments and chambers in terms of $V$ and start with the Coxeter complex of type $A_{n-1}$.
	
	The Coxeter complex $\Sigma$ of type $A_{n-1}$ is a simplicial complex isomorphic to the barycentric subdivision of the boundary of an $(n-1)$-simplex $\Delta_{n-1}$ denoted by $\sd(\partial \Delta_{n-1})$. The vertices of $\sd(\partial \Delta_{n-1})$ can be labeled with proper nonempty subsets of $\{1, \ldots, n\}$ where the cardinality of a vertex label is the rank of the corresponding face in $\partial \Delta_n$. The faces of $\sd(\partial \Delta_{n-1})$ are labeled with chains of labels of its vertices. These are other words to say that $\Sigma$ is isomorphic to a Boolean lattice $\B_n$, i.e. the order complex of the power set of $\{1, \ldots, n\}$ ordered by inclusion.

	The apartments of $\L(V)$ correspond to sublattices of $\L(V)$ that are isomorphic to Boolean lattices generated by $n$ elements. These $n$ elements are one-dimensional subspaces $L_1, \ldots, L_n$ such that $L_1 \oplus \ldots \oplus L_n = V$. Such a set $\{L_1, \ldots, L_n\}$ is called \emph{frame} of $V$.
	We summarize these properties in the following proposition.
	
	\begin{prop}
		Let $V$ be an n-dimensional vector space.
		Then the following is true. 
		The frames of $V$  are in one-to-one correspondence with apartments of $|\L(V)|$. Moreover, every basis of $V$ determines a frame and hence an apartment of $\L(V)$. 
		Chambers in the building $|\L(V)|$ correspond to maximal chains in $\L(V)$ 
		and the chambers of a fixed apartment $\Sigma$ are given by total orderings of the frame associated with $\Sigma$.
	\end{prop}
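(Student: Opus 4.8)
The plan is to check the four assertions one by one, using as the single nontrivial input the structural description of $|\L(V)|$ recalled just above (and proved in \cite[ch.\ 4.3]{ab}): the apartments of the building $|\L(V)|$ are precisely the subcomplexes $|B_\mathcal{F}|$, where $\mathcal{F}=\{L_1,\dots,L_n\}$ ranges over the frames of $V$ and $B_\mathcal{F}\subseteq\L(V)$ denotes the sublattice consisting of the subspaces $\bigoplus_{i\in I}L_i$, $I\subseteq\{1,\dots,n\}$. The first thing I would record is that $I\mapsto\bigoplus_{i\in I}L_i$ is a lattice isomorphism $\B_n\xrightarrow{\sim}B_\mathcal{F}$: it is visibly order preserving and bijective, and meet and join match because the $L_i$ form a direct sum. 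Hence every $|B_\mathcal{F}|$ is the order complex of $\B_n$, i.e.\ a Coxeter complex of type $A_{n-1}$ as described earlier.

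For the bijection between frames and apartments, surjectivity of $\mathcal{F}\mapsto|B_\mathcal{F}|$ is exactly the quoted fact. For injectivity I would recover $\mathcal{F}$ from the apartment $|B_\mathcal{F}|$ by means of the dimension labeling of $|\L(V)|$: the vertices of $|B_\mathcal{F}|$ carrying the label $1$ are the one-dimensional members of $B_\mathcal{F}$, and $\dim\bigl(\bigoplus_{i\in I}L_i\bigr)=1$ forces $I$ to be a singleton, so these vertices are exactly $L_1,\dots,L_n$. Next, a basis $v_1,\dots,v_n$ of $V$ gives the set $\{\langle v_1\rangle,\dots,\langle v_n\rangle\}$, and the identity $\langle v_1\rangle\oplus\cdots\oplus\langle v_n\rangle=V$ is just the statement that $(v_i)$ is a basis; so this set is a frame and therefore determines an apartment by the previous point. (This assignment is of course not injective, since rescaling and permuting the $v_i$ leaves the frame unchanged.)

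For the chambers, the definition of the order complex gives that the maximal simplices of $|\L(V)|$ are the maximal chains in $\L(V)\setminus\{\{0\},V\}$, and adjoining $\{0\}$ and $V$ is a bijection of these onto the maximal chains of $\L(V)$, i.e.\ the complete flags $\{0\}=U_0\subsetneq U_1\subsetneq\cdots\subsetneq U_n=V$ with $\dim U_i=i$. Restricting to a fixed apartment $\Sigma=|B_\mathcal{F}|$ and transporting along $\B_n\cong B_\mathcal{F}$, the chambers of $\Sigma$ become the maximal chains of $\B_n$, which are exactly the chains $\emptyset\subsetneq\{i_1\}\subsetneq\{i_1,i_2\}\subsetneq\cdots\subsetneq\{i_1,\dots,i_n\}$ indexed by the total orderings $(i_1,\dots,i_n)$ of $\{1,\dots,n\}$. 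Thus the chambers of $\Sigma$ are in bijection with the total orderings of the frame $\mathcal{F}$, the ordering $(i_1,\dots,i_n)$ corresponding to the flag $L_{i_1}\subsetneq L_{i_1}\oplus L_{i_2}\subsetneq\cdots\subsetneq V$.

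I do not expect any genuinely hard step here: once the apartment system of $|\L(V)|$ is taken to be the system of frame subcomplexes, everything reduces to the elementary isomorphism $\B_n\cong B_\mathcal{F}$ and the combinatorics of the Boolean lattice. The only places that need a little care are the bookkeeping of the index shift between chains in $\L(V)$ and chains in $\L(V)\setminus\{\{0\},V\}$, and the use of the dimension labeling to make the recovery of a frame from its apartment canonical; if one wanted a self-contained argument rather than a citation, the actual work would be re-establishing that the frame subcomplexes satisfy the building axioms, which is the content of \cite[ch.\ 4.3]{ab}.
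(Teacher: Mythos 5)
Your argument is correct and is exactly the unpacking the paper intends: the paper states this proposition without proof, as a summary of the preceding discussion of frames together with the reference to Abramenko--Brown, and your proof supplies precisely those omitted details (the isomorphism $\B_n\cong B_{\mathcal F}$, recovery of the frame from the rank-one vertices of its apartment, and the identification of maximal chains of $\B_n$ with total orderings). The only substantive input you rely on --- that the apartments of $|\L(V)|$ are the frame subcomplexes --- is the same citation the paper itself defers to, so there is no gap relative to what the paper establishes.
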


	\subsection*{Acknowledgements}
	We would like to thank Christophe Hohlweg for helpful discussions.  
	The first author would like to thank Russ Woodroofe to pointing out to her the concept of supersolvability and helpful discussions in Herstmonceux. The second author would like to thank Thomas Haettel and Dawid Kielak for the many hours spent  in the John Todd Wing at the  Oberwolfach Research Institute for Mathematics (MFO Research In Pairs ID 1421q)  discussing braid groups with her.   
	In addition we thank the anonymous referee for a thoughtful report and for pointing  out the reference \cite{her}.

\bibliography{bibliography}

\newcommand{\etalchar}[1]{$^{#1}$}
\begin{thebibliography}{HHM{\etalchar{+}}99}

\bibitem[AB08]{ab}
P.~Abramenko and K.~S. Brown.
\newblock {\em Buildings, {T}heory and applications}, volume 248 of {\em
  Graduate Texts in Mathematics}.
\newblock Springer, New York, 2008.

\bibitem[ABW07]{abw}
C.A. Athanasiadis, T.~Brady, and C.~Watt.
\newblock Shellability of noncrossing partition lattices.
\newblock {\em Proc. Amer. Math. Soc.}, 135(4):939--949, 2007.

\bibitem[AD10]{ad}
M.~Autord and P.~Dehornoy.
\newblock On the distance between the expressions of a permutation.
\newblock {\em European J. Combin.}, 31(7):1829--1846, 2010.

\bibitem[AR04]{ath_rei}
C.~Athanasiadis and V.~Reiner.
\newblock Noncrossing partitions for the group ${D}_n$.
\newblock {\em SIAM J. Discrete Math.}, 18(2):397--417, 2004.

\bibitem[AR14]{ar}
R.~M. Adin and Y.~Roichman.
\newblock On maximal chains in the non-crossing partition lattice.
\newblock {\em J. Combin. Theory Ser. A}, 125:18--46, 2014.

\bibitem[Arm09]{armstr}
D.~Armstrong.
\newblock Generalized noncrossing partitions and combinatorics of {C}oxeter
  groups.
\newblock {\em Mem. Amer. Math. Soc.}, 202(949):x+159, 2009.

\bibitem[BB05]{bb}
A.~Bj{\"o}rner and F.~Brenti.
\newblock {\em Combinatorics of Coxeter groups}, volume 231 of {\em Graduate
  Texts in Mathematics}.
\newblock Springer, New York, 2005.

\bibitem[BDSW14]{bdsw}
B.~Baumeister, M.~Dyer, C.~Stump, and P.~Wegener.
\newblock A note on the transitive {H}urwitz action on decompositions of
  parabolic {C}oxeter elements.
\newblock {\em Proc. Amer. Math. Soc. Ser. B}, 1:149--154, 2014.

\bibitem[Bes03]{bes}
D.~Bessis.
\newblock The dual braid monoid.
\newblock {\em Ann. Sci. École Norm. Sup.}, 36(4):647--683, 2003.

\bibitem[BM10]{bra-mcc}
T.~Brady and J.~McCammond.
\newblock Braids, posets and orthoschemes.
\newblock {\em Algebr. Geom. Topol.}, 10(4):2277--2314, 2010.

\bibitem[Bou02]{bour}
N.~Bourbaki.
\newblock {\em Lie groups and Lie algebras. Chapters 4–6.}
\newblock Elements of Mathematics (Berlin). Springer, Berlin, 2002.

\bibitem[Bra01]{bra_kpi}
T.~Brady.
\newblock {A Partial Order on the Symmetric Group and New $K(\pi, 1)$'s for the
  Braid Groups}.
\newblock {\em Adv. Math.}, 161(1):20--40, 2001.

\bibitem[Bro89]{bro}
K.~S. Brown.
\newblock {\em Buildings}.
\newblock Springer, New York, 1989.

\bibitem[BW02a]{bra_watt_par_ord}
T.~Brady and C.~Watt.
\newblock A partial order on the orthogonal group.
\newblock {\em Comm. Algebra}, 30(8):3749--3754, 2002.

\bibitem[BW02b]{bra_watt_kpi}
Thomas Brady and Colum Watt.
\newblock {$K(\pi,1)$}'s for {A}rtin groups of finite type.
\newblock In {\em Proceedings of the {C}onference on {G}eometric and
  {C}ombinatorial {G}roup {T}heory, {P}art {I} ({H}aifa, 2000)}, volume~94,
  pages 225--250, 2002.

\bibitem[BW08]{bra_watt_lattice}
T.~Brady and C.~Watt.
\newblock Non-crossing partition lattices in finite real reflection groups.
\newblock {\em Trans. Amer. Math. Soc.}, 360(4):1983--2005, 2008.

\bibitem[Car72]{car}
R.W. Carter.
\newblock Conjugacy classes in the {W}eyl groups.
\newblock {\em Compositio Math.}, 25:1--59, 1972.

\bibitem[GY02]{gy}
I.~Goulden and A.~Yong.
\newblock Tree-like properties of cycle factorizations.
\newblock {\em J. Combin. Theory Ser. A}, 98(1):106--117, 2002.

\bibitem[HHM{\etalchar{+}}99]{hhmmn}
M.~C. Hernando, F.~Hurtado, A.~M\'arquez, M.~Mora, and M.~Noy.
\newblock Geometric tree graphs of points in convex position.
\newblock {\em Discrete Appl. Math.}, 93(1):51--66, 1999.
\newblock 13th European Workshop on Computational Geometry CG'97 (W\"urzburg,
  1997).

\bibitem[HKS16]{hks}
T.~Haettel, D.~Kielak, and P.~Schwer.
\newblock The 6-strand braid group is {CAT}(0).
\newblock {\em Geom. Dedicata}, 182(1):263--286, 2016.

\bibitem[Hum90]{hum}
J.~Humphreys.
\newblock {\em Reflection groups and Coxeter groups}, volume~29 of {\em
  Cambridge Studies in Advanced Mathematics}.
\newblock University Press, Cambridge, 1990.

\bibitem[IS10]{is}
K.~Igusa and R.~Schiffler.
\newblock Exceptional sequences and clusters.
\newblock {\em J. Algebra}, 323(8):2183--2202, 2010.

\bibitem[Kre72]{kre}
G.~Kreweras.
\newblock Sur les partitions non crois\'ees d'un cycle.
\newblock {\em Discrete Math.}, 1(4):333--350, 1972.

\bibitem[MT06]{mcn_tho}
P.~McNamara and H.~Thomas.
\newblock Poset edge-labellings and left modularity.
\newblock {\em European J. Combin.}, 27(1):101--113, 2006.

\bibitem[Noy98]{noy}
M.~Noy.
\newblock Enumeration of noncrossing trees on a circle.
\newblock {\em Discrete Math.}, 180:301--313, 1998.

\bibitem[Rei97]{rei}
V.~Reiner.
\newblock Non-crossing partitions for classical reflection groups.
\newblock {\em Discrete Math.}, 177(1-3):195--222, 1997.

\bibitem[RR13]{rr}
V.~Reiner and Y.~Roichman.
\newblock Diameter of graphs of reduced words and galleries.
\newblock {\em Trans. Amer. Math. Soc.}, 365(5):2779--2802, 2013.

\end{thebibliography}


\begin{thebibliography}{ABW07}

\bibitem[AB08]{ab}
P.~Abramenko and K.~S. Brown.
\newblock {\em Buildings, {T}heory and applications}, volume 248 of {\em
  Graduate Texts in Mathematics}.
\newblock Springer, New York, 2008.

\bibitem[ABW07]{abw}
C.A. Athanasiadis, T.~Brady, and C.~Watt.
\newblock Shellability of noncrossing partition lattices.
\newblock {\em Proc. Amer. Math. Soc.}, 135(4):939--949, 2007.

\bibitem[AR04]{ath_rei}
C.~Athanasiadis and V.~Reiner.
\newblock Noncrossing partitions for the group ${D}_n$.
\newblock {\em SIAM J. Discrete Math.}, 18(2):397--417, 2004.

\bibitem[AR14]{ar}
R.~M. Adin and Y.~Roichman.
\newblock On maximal chains in the non-crossing partition lattice.
\newblock {\em J. Combin. Theory Ser. A}, 125:18--46, 2014.

\bibitem[Arm09]{armstr}
D.~Armstrong.
\newblock Generalized noncrossing partitions and combinatorics of {C}oxeter
  groups.
\newblock {\em Mem. Amer. Math. Soc.}, 202(949):x+159, 2009.

\bibitem[BB05]{bb}
A.~Bj{\"o}rner and F.~Brenti.
\newblock {\em Combinatorics of Coxeter groups}, volume 231 of {\em Graduate
  Texts in Mathematics}.
\newblock Springer, New York, 2005.

\bibitem[Bes03]{bes}
D.~Bessis.
\newblock The dual braid monoid.
\newblock {\em Ann. Sci. École Norm. Sup.}, 36(4):647--683, 2003.

\bibitem[Bj{\"o}80]{bj}
A.~Bj{\"o}rner.
\newblock Shellable and {C}ohen-{M}acaulay partially ordered sets.
\newblock {\em Trans. Amer. Math. Soc.}, 260(1):159--183, 1980.

\bibitem[BM10]{bra-mcc}
T.~Brady and J.~McCammond.
\newblock Braids, posets and orthoschemes.
\newblock {\em Algebr. Geom. Topol.}, 10(4):2277--2314, 2010.

\bibitem[Bra01]{bra_kpi}
T.~Brady.
\newblock {A Partial Order on the Symmetric Group and New $K(\pi, 1)$'s for the
  Braid Groups}.
\newblock {\em Adv. Math.}, 161(1):20--40, 2001.

\bibitem[Bro89]{bro}
K.~S. Brown.
\newblock {\em Buildings}.
\newblock Springer, New York, 1989.

\bibitem[BW02a]{bra_watt_kpi}
T.~Brady and C.~Watt.
\newblock {$K(\pi,1)$}'s for {A}rtin groups of finite type.
\newblock In {\em Proceedings of the {C}onference on {G}eometric and
  {C}ombinatorial {G}roup {T}heory, {P}art {I} ({H}aifa, 2000)}, volume~94,
  pages 225--250, 2002.

\bibitem[BW02b]{bra_watt_par_ord}
T.~Brady and C.~Watt.
\newblock A partial order on the orthogonal group.
\newblock {\em Comm. Algebra}, 30(8):3749--3754, 2002.

\bibitem[BW08]{bra_watt_lattice}
T.~Brady and C.~Watt.
\newblock Non-crossing partition lattices in finite real reflection groups.
\newblock {\em Trans. Amer. Math. Soc.}, 360(4):1983--2005, 2008.

\bibitem[Car72]{car}
R.W. Carter.
\newblock Conjugacy classes in the {W}eyl groups.
\newblock {\em Compositio Math.}, 25:1--59, 1972.

\bibitem[GY02]{gy}
I.~Goulden and A.~Yong.
\newblock Tree-like properties of cycle factorizations.
\newblock {\em J. Combin. Theory Ser. A}, 98(1):106--117, 2002.

\bibitem[Her99]{her}
P.~Hersh.
\newblock {\em Decomposition and enumeration in partially ordered sets}.
\newblock ProQuest LLC, Ann Arbor, MI, 1999.
\newblock Thesis (Ph.D.)--Massachusetts Institute of Technology.

\bibitem[HKS16]{hks}
T.~Haettel, D.~Kielak, and P.~Schwer.
\newblock The 6-strand braid group is {CAT}(0).
\newblock {\em Geom. Dedicata}, 182(1):263--286, 2016.

\bibitem[Hum90]{hum}
J.~Humphreys.
\newblock {\em Reflection groups and Coxeter groups}, volume~29 of {\em
  Cambridge Studies in Advanced Mathematics}.
\newblock University Press, Cambridge, 1990.

\bibitem[Kre72]{kre}
G.~Kreweras.
\newblock Sur les partitions non crois\'ees d'un cycle.
\newblock {\em Discrete Math.}, 1(4):333--350, 1972.

\bibitem[MT06]{mcn_tho}
P.~McNamara and H.~Thomas.
\newblock Poset edge-labellings and left modularity.
\newblock {\em European J. Combin.}, 27(1):101--113, 2006.

\bibitem[Noy98]{noy}
M.~Noy.
\newblock Enumeration of noncrossing trees on a circle.
\newblock {\em Discrete Math.}, 180:301--313, 1998.

\bibitem[Rei97]{rei}
V.~Reiner.
\newblock Non-crossing partitions for classical reflection groups.
\newblock {\em Discrete Math.}, 177(1-3):195--222, 1997.

\bibitem[TD10]{tom}
T.~Tom~Dieck.
\newblock {\em Algebraic topology}.
\newblock EMS Textbooks in mathematics. European Mathematical Soc., Zürich,
  corr. 2. pr. edition, 2010.

\end{thebibliography}
\bibliographystyle{alpha}

\end{document}